\documentclass[11pt]{article}

\usepackage{graphicx} 
\usepackage{subcaption}
\usepackage{float} 
\usepackage{amssymb}
\usepackage{amsmath}
\usepackage{enumitem} 
\usepackage{mathtools}
\usepackage[thmmarks,amsmath]{ntheorem} 
\usepackage{lineno}
\usepackage{xcolor}
\definecolor{darkblue}{RGB}{0, 0, 100}
\definecolor{darkorange}{RGB}{200, 100, 0}

\usepackage[numbers]{natbib}  
\usepackage{bm} 
\usepackage{extarrows}
\usepackage{mathrsfs} 
\usepackage{algorithm}         
\usepackage{algorithmic}      
\usepackage{enumitem}         
\usepackage{algorithm}
\usepackage{algorithmic}
\usepackage{amsmath,amssymb}

\usepackage{lipsum}

\usepackage{booktabs} 
{
    \theoremstyle{nonumberplain}
    \theoremheaderfont{\bfseries}
    \theorembodyfont{\normalfont}
    \theoremsymbol{\mbox{$\Box$}}
    \newtheorem{proof}{Proof}
}
\makeatletter
\newcounter{mycounter}[section]

{
    \theoremstyle{plain}
    \theoremheaderfont{\bfseries}
    \theorembodyfont{\normalfont}
    \newtheorem{theorem}[mycounter]{Theorem}
    
    \newtheorem{lemma}[mycounter]{Lemma}

    \newtheorem{claim}[mycounter]{Claim}

    \newtheorem{conjecture}[mycounter]{Conjecture}
}

{
    \theoremheaderfont{\bfseries}
    \theorembodyfont{\normalfont}
    
}

\makeatother

\newcommand{\RNum}[1]{\uppercase\expandafter{\romannumeral #1\relax}}
\allowdisplaybreaks[4]

\graphicspath{{figure/}}                                 
\usepackage[a4paper]{geometry}                        
\title{Highly connected spanning oriented subdigraphs in generalizations of semicomplete digraphs \thanks{The author's work is supported by National Natural Science Foundation of China (No.12571373)}}
\author{Jia Zhou\textsuperscript{1}, J{\o}rgen Bang-Jensen\textsuperscript{2,3}\footnote{Corresponding author. E-mail address: jbj@imada.sdu.dk.} , Tong Zhou\textsuperscript{2}, Jin Yan\textsuperscript{2} %
\unskip\\[2mm]
\textsuperscript{1} School of Mathematics and Statistics, Ningxia University, Yinchuan 750021, China\\
\textsuperscript{2} School of Mathematics, Shandong University, Jinan 250100, China
\\
\textsuperscript{3} Department of Mathematics and Computer Science, University of\\ Southern Denmark, Odense DK-5230, Denmark}
\date{ }

\begin{document}
\maketitle

\begin{abstract}
Let $k$ be a positive integer. Jackson and Thomassen conjectured in 1989 that there exists an integer  function $f(k)$ such that every $f(k)$-strong digraph admits a spanning $k$-strong oriented subdigraph. They even conjectured that one can take $f(k)=2k$ [Ann. N. Y. Acad. Sci. 555 (1989) 402–412]. Already the existence of $f(2)$ is open for general digraphs. Thomassen proved that $f(2)=4$ for symmetric digraphs.  For general 
$k$, the existence of $f(k)$ was only known for {symmetric digraphs (every arc is in a 2-cycle),} locally semicomplete digraphs and quasi-transitive digraphs.  
In this paper, we prove the existence of $f(k)$ for two subclasses of the semicomplete multipartite digraphs, namely extended semicomplete digraphs and semicomplete split digraphs. We prove that every $(4k+1)$-strong extended semicomplete digraph contains a spanning $k$-strong oriented subdigraph and every $5k$-strong semicomplete split digraph contains a spanning $k$-strong oriented subdigraph. The first result implies that for the large class of digraphs which can be obtained from some semicomplete digraph $S$ on at least 3 vertices by substituting arbitrary digraphs for each vertex of $S$ we also have $f(k)\leq 4k+1$.
\end{abstract}
\maketitle

\vspace{1ex}
{\noindent\small{\bf Keywords: } connectivity; orientation; extended semicomplete digraphs; semicomplete compositions}
\vspace{1ex}

{\noindent\small{\bf AMS subject classifications.} 05C20, 05C38, 05C40}

\section{Introduction}

An {\bf orientation} of an undirected graph $G=(V,E)$ is an assignment of one of the two possible orientations $x\to y$ or $y\to x$ 
to each edge $xy$ of $E$. A classical result due to Robbins \cite{robbins1939} says that a connected graph has a strongly connected orientation if and only if $G$ is 2-edge-connected. 
Nash-Williams \cite{nashwilliamsCJM12} generalized Robbins' theorem to general arc-connectivities. A digraph $D=(V,A)$ is {\bf $\boldsymbol{k}$-arc-strong} if it remains strongly connected after the removal of any subset of $k-1$ arcs. 

\begin{theorem}[Nash-Williams Orientation Theorem]\cite{nashwilliamsCJM12}\label{thm:nwthm}
A graph $G=(V,E)$ has a $k$-arc-strong orientation if and only if $G$ is $2k$-edge-connected.
\end{theorem}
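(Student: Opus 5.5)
The necessity direction is a cut-counting argument. Suppose $D$ is a $k$-arc-strong orientation of $G$, and let $\emptyset\neq X\subsetneq V$. Then $D$ has at least $k$ arcs leaving $X$ and at least $k$ arcs entering $X$. These arcs are distinct edges of the cut $\partial_G(X)$, so $|\partial_G(X)|\ge 2k$. By Menger's theorem, $G$ is therefore $2k$-edge-connected.

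For sufficiency I would use the splitting-off approach, combining Lov\'asz's splitting theorem with Mader's theorem. First I reduce to the case where every degree is even. I will argue by induction on $|V|+|E|$, working in the class of $2k$-edge-connected (multi)graphs, and keep a $2k$-edge-connected graph $G$ that is minimal, i.e.\ deleting any edge destroys the property. In such a graph every edge lies in a tight cut of size exactly $2k$.

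The key step is to show that if $G$ is not just two vertices joined by $2k$ parallel edges, then there is a vertex $s$ with the following property: some pair of edges $us,sv$ can be split off, replacing them by an edge $uv$, while preserving $2k$-edge-connectivity among the vertices other than $s$.
\begin{itemize}
\item If $G$ has a vertex $s$ of even degree $\ge 2k+2$, I apply Mader's splitting theorem. Since the degree is even, a complete admissible splitting at $s$ exists, provided $s$ is not a cut vertex of degree 3, which cannot happen here.
\item If every vertex has degree exactly $2k$ or odd degree, I first handle the parity issue. The cleanest route avoids it entirely: prove the stronger Eulerian-type statement by taking $G'=G$ plus a perfect matching on the odd-degree vertices, then orient an Euler tour.
\end{itemize}

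Since that shortcut does not preserve the connectivity count directly, I instead follow Nash-Williams' idea, which gives a better plan. Pair the odd-degree vertices suitably and orient so that every cut $X$ satisfies
\[
|d^+(X)-d^-(X)|\le 1 .
\]
Then $d^+(X)\ge \lfloor |\partial(X)|/2\rfloor\ge k$. So the concrete plan is to prove the \emph{well-balanced orientation} statement: every graph has an orientation with $d^+(X)\ge\lfloor d_G(X)/2\rfloor$ for every cut $X$.

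I would prove this via splitting, by induction on the number of edges.
\begin{itemize}
\item Add a new vertex $s$ adjacent to all odd-degree vertices, so that the resulting graph $H$ is Eulerian.
\item Repeatedly apply Mader's theorem at a vertex of degree at least 4 (or at $s$), splitting off pairs while preserving all local edge-connectivities $\lambda(x,y)$ among the other vertices.
\item Once the graph is reduced (all non-$s$ vertices of degree $\le 2$ after deletions), orient it by hand.
\item Lift the orientation back: a split edge $uv$ oriented $u\to v$ becomes $u\to s\to v$. In an Eulerian setting this keeps every cut balanced.
\end{itemize}
Since $H$ is Eulerian, the lifted orientation gives $d^+_H(X)=d^-_H(X)$ for every cut. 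Deleting $s$ removes at most one edge from each side of a cut $X$ only when $d_G(X)$ is odd, which yields the $\pm1$ balance.

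The main obstacle is the Mader splitting step. One must verify that splitting preserves connectivity so that the induction applies, and, more subtly, that cuts are equitably oriented for all of $G$ rather than only for $H$. Because $H$ is Eulerian, the Euler-tour orientation already gives exact balance in $H$. So the real work is the simple parity count showing that the edges at $s$ crossing $X$ number $d_H(X)-d_G(X)\equiv d_G(X)\pmod 2$, together with the Euler-tour orientation of $H$ itself. On reflection, then, no splitting is needed: an Euler tour of $H$ balances every cut in $H$. Removing $s$ and its incident edges changes $d^+(X)-d^-(X)$ by the signed count of $s$-edges across $X$, and that count need not be at most 1. This is exactly why Nash-Williams needed the careful pairing: the odd vertices must be matched so that each cut is crossed by few pairing edges. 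Achieving this through a splitting-off argument at $s$ is the step I expect to be hardest.
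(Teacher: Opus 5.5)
\textbf{Verdict: there is a genuine gap.} Your necessity argument is fine. Sufficiency is not proved: you end by naming the decisive step (a suitable pairing of the odd vertices) as one you have not carried out.

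More seriously, the statement that step is meant to deliver is false. Not every graph, nor even every $2k$-edge-connected graph, has an orientation with $d^+(X)\ge\lfloor d_G(X)/2\rfloor$ for every $\emptyset\neq X\subsetneq V$. Take $K_4$, which is $3$-edge-connected and hence $2k$-edge-connected for $k=1$.
\begin{itemize}
\item Each singleton cut has size $3$, so every out-degree is $1$ or $2$.
\item The out-degrees sum to $6$, so two vertices $x,y$ have out-degree $2$.
\item Then $d^+(\{x,y\})=2+2-1=3$, i.e.\ $d^-(\{x,y\})=1<2=\lfloor d_G(\{x,y\})/2\rfloor$.
\end{itemize}
Nash-Williams' strong orientation theorem concerns local edge-connectivities, $\lambda_D(x,y)\ge\lfloor\lambda_G(x,y)/2\rfloor$ for all pairs $x,y$, not all cuts. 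So no pairing whatsoever makes an Euler-tour orientation of $H$ restrict to a cut-balanced orientation of $G$. Your final plan therefore cannot be completed as stated. You were right to retract the claim that deleting $s$ costs at most one arc per side.

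The missing idea lies in the approach you began with and then dropped. Pass to a minimally $2k$-edge-connected spanning multigraph; surplus edges can be oriented arbitrarily at the end. By Mader's theorem it has a vertex $s$ with $d(s)=2k$. This follows from your own observation that every edge lies in a tight cut, by uncrossing an inclusion-minimal tight set.

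Degree $2k$ is even, so this is the easy case, not a parity obstruction. Lov\'asz's splitting theorem splits the $2k$ edges at $s$ into $k$ pairs so that the resulting multigraph on $V-s$ is still $2k$-edge-connected. By induction it has a $k$-arc-strong orientation; replace each new edge oriented $u\to v$ by the path $u\to s\to v$. Then:
\begin{itemize}
\item $d^+(s)=d^-(s)=k$, which handles the cut $(\{s\},V-s)$.
\item Every other cut has a side $Y$ with $\emptyset\neq Y\subsetneq V-s$. Each arc leaving (entering) $Y$ in the smaller orientation lifts to a distinct arc leaving (entering) $Y$, namely itself or $u\to s$ (resp.\ $s\to v$).
\end{itemize}
Hence the lifted orientation is $k$-arc-strong, and no well-balanced orientation is needed.
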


As every $k$-arc-strong digraph must have at least $k$ arcs in both directions for every 2-partition $X,V\setminus X$ of its vertex set, the requirement that $G$ is  $2k$-edge-connected is clearly necessary in Theorem \ref{thm:nwthm}.

A digraph $D=(V,A)$ is {\bf $\boldsymbol{k}$-strong} if it has at least $k+1$ vertices and remains strong after the deletion of any subset of $k-1$ vertices. For orientations maintaining high vertex-connectivity 
there is no obvious necessary condition other than $2k$-edge-connected (if some cut has less than $2k$ edges, then we cannot even obtain a $k$-arc-strong orientation).
Thomassen \cite{thomassen1989} made the following conjecture

\begin{conjecture}\cite{thomassen1989}
\label{conj:kstrongorG}
There exists an integer function $g(k)$ such that every $g(k)$-connected graph has a $k$-strong orientation.
\end{conjecture}

The graph obtained from two copies of $K_{2k-1}$ by adding the edges of a matching between the two copies is $(2k-1)$-connected but does not even have a $k$-arc-strong orientation so $g(k)\geq 2k$ must hold. Conjecture \ref{conj:kstrongorG} is open for every $k\geq 3$.

Jord\'an \cite{jordanJCTB95} was the first to prove the existence of $g(2)$. He proved that $g(2)\leq 18$.
Later, confirming the special case of 2-strong connectivity of a conjecture of Frank \cite{frank1995},  Thomassen proved the following which implies that $g(2)=4$.

\begin{theorem}\cite{thomassenJCTB110}
\label{thm:2-strongor}
A graph $G=(V,E)$ has a 2-strong orientation if and only if $G-v$ is 2-edge-connected for every $v\in V$.
\end{theorem}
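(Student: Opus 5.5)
The necessity direction needs no work. If $G$ has a 2-strong orientation $D$, then for every $v$ the digraph $D-v$ is strong. A strong orientation forces the underlying graph $G-v$ to be 2-edge-connected: $G-v$ is connected, and if some edge $e$ formed a cut of $G-v$, then with $e$ oriented one way no arc could cross that cut in the other direction. So all the work is in sufficiency.

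For sufficiency I would take a minimal counterexample. Let $G$ be a graph such that $G-v$ is 2-edge-connected for every $v$ but $G$ has no 2-strong orientation, and choose it with $|V|+|E|$ minimum. The plan is to pin down its structure through a sequence of reductions, each of which lifts an orientation from a smaller graph.
\begin{itemize}
\item \textbf{Edge deletion.} If deleting an edge $e$ preserves the hypothesis, then the smaller graph $G-e$ has a 2-strong orientation, and orienting $e$ arbitrarily keeps it 2-strong. So $G$ is minimal with respect to the hypothesis.
\item \textbf{Small separators.} If $G$ has a vertex cut $\{x,y\}$, I would split $G$ along it into two smaller graphs, adding a suitable gadget on $\{x,y\}$ to each side (for instance a path or parallel edges $xy$), so that each part still satisfies the hypothesis. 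I would then orient both parts by minimality and glue the orientations so that they agree on how paths pass through $\{x,y\}$. I expect cuts of size three to need a similar splitting, which would make the minimal counterexample 4-connected or nearly so.
\item \textbf{Degree-3 vertices.} Every degree is at least 3, since $G-u$ must be 2-edge-connected for each neighbour $u$. A vertex $v$ of degree 3 I would remove and replace by a triangle or a single edge on its neighbours. After orienting the reduced graph, I would reinsert $v$ with one out-arc and two in-arcs, or the reverse, and check the 2-strong property directly.
\item \textbf{Splitting off at high degree.} At vertices of large degree I would use Mader-type splitting off of pairs of edges, preserving the "2-edge-connected after deleting any vertex" condition. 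This reduces to a graph where every vertex has degree 3 or 4, and then the degree-3 reduction above applies.
\end{itemize}

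The main obstacle is the lifting step. A 2-strong orientation of the split or reduced graph must stay 2-strong when the split edges are restored as directed paths through the original vertex, and a single vertex removal can destroy both directions of the new path. To control this I would first try to strengthen the induction hypothesis, following Frank's more general framework. The strengthened statement would prescribe the orientation at one vertex, or require a mixed orientation in which a specified edge may be oriented either way. That extra freedom is what lets the gluing at 2- and 3-cuts and the reversal of split-off pairs go through.
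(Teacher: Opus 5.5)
The paper gives no proof of this result; it is quoted from Thomassen. As transcribed it omits a hypothesis, so the sufficiency direction you set out to prove is false.

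A $2$-strong digraph is $2$-arc-strong, so every edge cut of $G$ must carry at least two arcs in each direction. In particular $G$ must be $4$-edge-connected and have minimum degree at least $4$. Two counterexamples satisfy the stated hypothesis but have no $2$-strong orientation:
\begin{itemize}
\item The graph $K_4$: each $K_4-v$ is a triangle, hence $2$-edge-connected. But in any orientation every vertex has in-degree or out-degree at most $1$. Deleting at most one of its neighbours therefore turns it into a source or a sink.
\item Two copies of $K_5$ joined by a matching of three edges: every $G-v$ is $2$-edge-connected, yet $G$ has a $3$-edge cut.
\end{itemize}
The correct statement is Frank's conjecture for $k=2$, proved by Thomassen. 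It requires $G$ to be $4$-edge-connected \emph{and} $G-v$ to be $2$-edge-connected for every $v$. Your necessity argument should have produced the first condition too. Your own remark that the hypothesis only forces degree $\geq 3$ was the warning sign.

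Concretely, your degree-$3$ step can never succeed. A vertex reinserted ``with one out-arc and two in-arcs'' becomes a sink in $D-u$, where $u$ is its unique out-neighbour. So no orientation of the reduced graph lifts to a $2$-strong one, and your reduction to ``every vertex has degree $3$ or $4$'' lands exactly on graphs that have no $2$-strong orientation.

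Even for the corrected statement, where degree-$3$ vertices cannot occur, the proposal is a plan rather than a proof. What is missing is precisely the hard part.
\begin{itemize}
\item Mader-type splitting preserves local edge-connectivities, not the vertex-deleted condition. If you split $su,sv$ into $uv$, then $G'-u=(G-u)-sv$, which can have a bridge even though $G-u$ does not. You would need a splitting lemma tailored to the mixed condition ``$G$ is $4$-edge-connected and every $G-v$ is $2$-edge-connected,'' and none is given.
\item The gadgets for $2$- and $3$-vertex cuts are unspecified.
\item So is the rule for gluing the two orientations so that paths through the cut survive deletion of one cut vertex.
\item So is the ``strengthened induction hypothesis following Frank.''
\end{itemize}
You yourself flag the lifting step as unresolved, so as it stands sufficiency is not established.
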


{Conjecture \ref{conj:kstrongorG} was proved very recently by Garamv\"olgyi, Jordan, Kir\'aly and Vill\'anyi who proved that $g(k)\leq 320k^2$.

\begin{theorem}\cite{garamvolgyiFM13}
  \label{thm:kstrongorG}
  For every integer $k\geq 1$ every $320k^2$-connected graph has a $k$-strong orientation.
  \end{theorem}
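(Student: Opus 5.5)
The statement is taken from \cite{garamvolgyiFM13}, so in the paper it is a cited result. The following is only a plan for how I would try to prove it. My approach would mix a decomposition step with the Nash-Williams Orientation Theorem (Theorem~\ref{thm:nwthm}), applied not to $G$ itself but to auxiliary graphs built so that vertex deletions turn into edge deletions. I would use the following characterization. An orientation $D$ of $G$ is $k$-strong if and only if for every set $X$ of at most $k-1$ vertices and every partition $(A,B)$ of $V\setminus X$ with $A,B\neq\emptyset$, $D$ has an arc from $A$ to $B$. So it suffices to build an orientation in which every such cut is crossed in both directions by many arcs, enough that deleting $X$ cannot destroy all of them.

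\textbf{Step 1 (decomposition).} I would first show that a $ck^2$-connected graph contains about $ck$ edge-disjoint spanning subgraphs $H_1,\dots,H_m$, each of which is \emph{robustly} highly connected: $H_i-X$ is $\Omega(k)$-edge-connected for every $X$ with $|X|\le k-1$. To get these, I would use a Nash-Williams--Tutte style tree-packing argument on $G$. A $ck^2$-connected graph remains highly edge-connected after deleting $k-1$ vertices, and I would distribute the edges at random (or greedily via matroid union) so that each $H_i$ inherits a $1/m$ fraction of every large cut. The quadratic loss $k\cdot k$ comes from needing $k$ colours, each of which must survive deletion of $k-1$ vertices.

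\textbf{Step 2 (orienting).} Next I would orient each $H_i$ separately, using Theorem~\ref{thm:nwthm} to make it $\ell$-arc-strong for a suitable $\ell=\Theta(k)$. I would then argue that the union is $k$-strong. Take a cut $(A,B)$ of $G-X$. Every $H_i-X$ has many edges across it, hence arcs in both directions in $D[H_i]$ minus those incident with $X$. The point to control is that the arcs of $D[H_i]$ across $(A,B)$ can all be incident with $X$. To prevent this, I would refine Step 1 so that in each $H_i$ every vertex has degree $O(k)$ while cuts of $H_i-X$ still have size $\Omega(k)$. Then the at most $(k-1)\cdot O(k)$ edges at $X$ are spread over different $H_i$, and by pigeonhole some $H_i$ has a cut of $H_i-X$ untouched by $X$.

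\textbf{Main obstacle.} The crux is exactly this interaction. An arc-strong orientation of $H_i$ need not stay strong after deleting vertices, because all arcs crossing $(A,B)$ in one direction may pass through $X$. I would first try to avoid this by making the orientation of each $H_i$ $\ell$-arc-strong even in $H_i-X$. Concretely, I would choose the orientation via a submodular flow / Frank-type orientation theorem that demands $\ell$ arcs in each direction across every "$X$-restricted" cut simultaneously. Whether the relevant set function is crossing-supermodular enough for such a theorem to apply is the step I am least sure of. If it fails, my fallback would be a probabilistic orientation inside each $H_i$ combined with a union bound over cuts, which should still lose only polynomial factors in $k$.
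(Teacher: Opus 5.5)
Since the paper only quotes this theorem from \cite{garamvolgyiFM13}, your plan has to stand on its own. The step you call the crux is not closed by anything you propose.

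\textbf{Step 2.} The pigeonhole miscounts. If every vertex has degree $O(k)$ in each $H_i$, then $X$ meets $(k-1)\cdot O(k)$ edges of \emph{each} $H_i$, so $\Theta(k^3)$ edges over all $m=\Theta(k)$ subgraphs, not $(k-1)\cdot O(k)$. Worse, an $\ell$-arc-strong orientation of $H_i$ forces $\delta(H_i)\ge 2\ell$. So a single vertex of $X$ can absorb all $\ell$ arcs that Theorem~\ref{thm:nwthm} guarantees from $A$ to $B\cup X$, in every $H_i$ at once, and your counting does not exclude this.

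Your first fix asks that $H_i-X$ be $\ell$-arc-strong in the chosen orientation for all $|X|\le k-1$. For $\ell\ge 1$ this already says the orientation of $H_i$ is $k$-strong, so it is the theorem itself rather than a reduction. The truncated requirement (``$\ell$ arcs across every cut of $G-X$ with $|X|\le k-1$, nothing otherwise'') is also not crossing supermodular: uncrossing two such cuts can produce one whose deleted part has up to $2k-2$ vertices. More fundamentally, deciding whether a graph has a $k$-strong orientation is NP-complete for every $k\ge 3$ (Durand de Gevigney), so no Frank-type good characterization of such requirements should be expected.

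The random fallback loses a factor depending on $n$, not on $k$. Take a $ck^2$-regular, $ck^2$-connected graph on $n\gg 2^{ck^2}$ vertices. An independent set of size $n/(ck^2+1)$ has edge-disjoint stars, each becoming a source independently with probability $2^{-ck^2}$. So a random orientation is almost surely not even strong, and a union bound over sets $X$ and cuts needs connectivity growing with $\log n$.

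\textbf{Step 1.} It fails for the same reasons. By the same independence argument, random edge-splitting leaves some vertex with no edge in $H_1$ with high probability: each vertex of the independent set misses colour $1$ with probability about $e^{-k}$, whatever $n$ is. Tree packing via matroid union in the graphic matroid controls only edge-connectivity; nothing stops one vertex from separating every tree assigned to $H_i$. In fact Step 1 as stated implies Kriesell's conjecture, which was open for a long time: take a spanning tree of $H_1$, and its complement contains $H_2$, which is $k$-connected.

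This robust decomposition is exactly the main new ingredient of \cite{garamvolgyiFM13}. Working with the generic $d$-dimensional rigidity matroid, they show that every $10d(d+1)t$-connected graph contains $t$ edge-disjoint spanning $d$-rigid subgraphs. Robustness is then automatic: by the coning theorem, deleting a set $X$ from a $d$-rigid graph leaves a $(d-|X|)$-rigid, hence $(d-|X|)$-connected, graph. The quadratic bound comes from $d=\Theta(k)$ and only a bounded number $t$ of rigid subgraphs, from which the orientation is then constructed. With your accounting ($\Theta(k)$ subgraphs, each robust to deleting $k-1$ vertices), that packing theorem would give only $\Theta(k^3)$.
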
}
Every graph $G=(V,E)$ has a corresponding digraph 
$\stackrel{\leftrightarrow}{G}$, called the {\bf complete biorientation of $G$}, which is obtained by replacing every edge $xy$ of $E$ by the directed 
2-cycle $xyx$. A digraph $D$ is {\bf symmetric} if $D=\stackrel{\leftrightarrow}{G}$ for some graph $G$. Clearly $\stackrel{\leftrightarrow}{G}$ is $r$-strong if and only if $G$ is $r$-connected and $G$ has a $k$-strong orientation if and only if we can delete one arc of every 2-cycle in $\stackrel{\leftrightarrow}{G}$ to obtain a $k$-strong oriented graph (an orientation of $G$). This motivates the definition of an orientation of a general digraph $D$:
An {\bf orientation} of a digraph $D=(V,A)$ is any oriented spanning subdigraph of $D$ which we can obtain by deleting precisely one arc from every 2-cycle of $D$.
{Theorem \ref{thm:kstrongorG} is clearly equivalent to the following.

\begin{theorem}\label{thm:orientsymmetric}
    For every integer $k\geq 1$ every $320k^2$-strong symmetric digraph has a $k$-strong orientation.
\end{theorem}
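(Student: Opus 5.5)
The plan is to derive Theorem \ref{thm:orientsymmetric} directly from Theorem \ref{thm:kstrongorG}, using the correspondence between a graph $G$ and its complete biorientation $\stackrel{\leftrightarrow}{G}$ described just before the statement.

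Let $D$ be a $320k^2$-strong symmetric digraph. By definition $D=\stackrel{\leftrightarrow}{G}$ for some graph $G=(V,E)$ on the same vertex set. We then verify two facts.

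\emph{First fact: $G$ is $320k^2$-connected.} Both notions require at least $320k^2+1$ vertices, and the vertex sets coincide. For any $S\subseteq V$, the digraph $D-S$ equals $\stackrel{\leftrightarrow}{(G-S)}$. A symmetric digraph is strong if and only if its underlying graph is connected. This holds because a path in $G-S$ yields directed paths in both directions, and conversely a directed path in $D-S$ projects to a path in $G-S$. Hence deleting any set of fewer than $320k^2$ vertices leaves $G-S$ connected.

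\emph{Second fact: orientations of $D$ are exactly orientations of $G$.} Every arc of $D$ lies in a 2-cycle $xyx$ coming from an edge $xy$ of $E$, and these 2-cycles partition $A(D)$. Deleting exactly one arc from each 2-cycle therefore amounts to choosing a direction for each edge of $G$.

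Now apply Theorem \ref{thm:kstrongorG} to $G$ to obtain a $k$-strong orientation of $G$. By the second fact this is a $k$-strong orientation of $D$, which proves the statement. The converse direction, which gives the claimed equivalence of the two theorems, follows by the same correspondence applied in reverse.

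There is no real obstacle here. The only point needing care is the equivalence ``$\stackrel{\leftrightarrow}{H}$ is strong $\iff$ $H$ is connected'', applied to every vertex-deleted subgraph, together with matching the vertex-count requirements in the two definitions.
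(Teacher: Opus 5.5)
Your proposal is correct and follows the paper's own route: the paper derives this statement from Theorem \ref{thm:kstrongorG} via the same two observations. These are that $\stackrel{\leftrightarrow}{G}$ is $r$-strong if and only if $G$ is $r$-connected, and that the orientations of $\stackrel{\leftrightarrow}{G}$ are exactly the orientations of $G$. You simply write out the verification (vertex deletion commuting with biorientation, the 2-cycles partitioning the arcs, matching vertex-count requirements) that the paper dismisses as ``clearly equivalent''.
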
}

 Theorem \ref{thm:nwthm}, which  implies that a $2k$-arc-strong symmetric digraph  has a $k$-arc-strong orientation, was generalized to orientations 
of  arbitrary digraphs by  Jackson \cite{jackson1988} who proved
the following result. Theorem \ref{thm:mixedkarcstrong} can also be deduced  from a more general orientation theorem of Frank  \cite{frankJCTB28}. The case $k=1$ was already proved in \cite{boesch1980}.

\begin{theorem}\cite{frankJCTB28,jackson1988}
\label{thm:mixedkarcstrong}
    Every $2k$-arc-strong digraph has a spanning $k$-arc-strong orientation
\end{theorem}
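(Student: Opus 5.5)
The plan is to reduce the statement to an orientation problem for an undirected graph with a crossing supermodular demand, and then apply Frank's general orientation theorem \cite{frankJCTB28}. Let $D=(V,A)$ be $2k$-arc-strong. Split $A$ into two parts. The first is the set $M$ of arcs that lie in no 2-cycle; these arcs are kept unchanged. The second consists of the 2-cycles of $D$. Let $G$ be the undirected multigraph on $V$ with one edge $xy$ for each 2-cycle $xyx$ of $D$. An orientation of $D$ is then exactly $M$ together with an orientation $\vec G$ of $G$. We need
\[
d^+_{\vec G}(X)\ \geq\ h(X):=k-d^+_M(X) \qquad\text{for every } \emptyset\neq X\subsetneq V .
\]
This suffices because $d^+$ of the orientation equals $d^+_{\vec G}+d^+_M$.

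First I check that $h$ is crossing supermodular. The out-degree function of any digraph is submodular: for crossing $X,Y$ it satisfies $d^+(X)+d^+(Y)\geq d^+(X\cap Y)+d^+(X\cup Y)$. Applying this to $M$ and negating gives supermodularity of $h$ on crossing pairs; the constant $k$ does not affect this. Frank's theorem for crossing supermodular demands states the following. $G$ has an orientation with $d^+_{\vec G}\ge h$ if and only if, for every partition $\mathcal P=\{V_1,\dots,V_t\}$ of $V$ with $t\ge 2$, both
\[
e_G(\mathcal P)\ \geq\ \sum_{i} h(V_i)\qquad\text{and}\qquad e_G(\mathcal P)\ \geq\ \sum_i h(V\setminus V_i)
\]
hold. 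Here $e_G(\mathcal P)$ is the number of edges of $G$ joining different parts.

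Next I verify these inequalities from $2k$-arc-strength. Each edge of $G$ between parts corresponds to two arcs of $D$ crossing between parts, one in each direction. Hence
\[
\sum_i d^+_D(V_i)=2e_G(\mathcal P)+\sum_i d^+_M(V_i).
\]
Since $d^+_D(V_i)\ge 2k$ for each $i$, we get
\[
2e_G(\mathcal P)\ \ge\ 2kt-\sum_i d^+_M(V_i)\ =\ 2\sum_i h(V_i)+\sum_i d^+_M(V_i)\ \ge\ 2\sum_i h(V_i).
\]
This is the first inequality. The second follows in the same way from in-degrees. We have $h(V\setminus V_i)=k-d^-_M(V_i)$, and summing $d^-_D(V_i)\ge 2k$ gives $e_G(\mathcal P)\ge \sum_i h(V\setminus V_i)$, again after discarding the nonnegative $d^-_M$ terms. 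Negative values of $h$ cause no trouble.

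The main obstacle is invoking Frank's theorem in exactly the right form, namely the partition condition for crossing (rather than intersecting) supermodular demands. The two partition inequalities above are precisely where the slack $\sum_i d^\pm_M(V_i)\ge 0$ is used.

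If I wanted to avoid Frank's theorem, I would first try a Lov\'asz/Mader-type splitting induction. This would mimic the proof of Theorem \ref{thm:nwthm}, treating $M$ as fixed arcs that carry half weight toward the cut demand. I expect that route to be messier, so the reduction via crossing supermodularity is the primary plan.
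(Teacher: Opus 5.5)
\textbf{There is a genuine gap: the form of Frank's criterion you invoke is false, and exactly in the case you dismissed, namely negative demands.}

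Your reduction is the natural way to carry out the deduction from Frank's orientation theorem that the paper alludes to. You keep the single arcs $M$, orient the graph $G$ of 2-cycles, and demand $d^+_{\vec G}(X)\ge h(X)=k-d^+_M(X)$. The paper itself gives no proof, only the citations. Your counting is also correct.

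The problem is the black box. You quote Frank's theorem as: partition and co-partition inequalities suffice for an arbitrary crossing supermodular $h$, with negative values ``causing no trouble''. In $\sum_i h(V_i)$, a part with negative demand can offset a positive one. An orientation, however, must meet the demand of each set separately.

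Here is a concrete counterexample.
\begin{itemize}
\item Take $k=1$, $V=\{x,y_1,y_2\}$, $M=\{y_1x,y_2x\}$, and let $G$ be the single edge $y_1y_2$.
\item No orientation works: $h(\{x\})=1$, but $x$ is isolated in $G$.
\item The other values are $h(\{y_1\})=h(\{y_2\})=0$, $h(\{x,y_1\})=h(\{x,y_2\})=1$ and $h(\{y_1,y_2\})=-1$.
\item For $\{\{x\},\{y_1,y_2\}\}$ both sums equal $1+(-1)=0=e_G$. For each of the other three partitions both sums equal $1=e_G$.
\end{itemize}
So every inequality of your criterion holds, yet no orientation exists. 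As written, you verify a condition that does not imply the conclusion.

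\textbf{The fix.} You need a correct form of the criterion, one in which negative demands cannot cancel positive ones. For example, use a subpartition form in which an extra part $V_0$ carries no demand; equivalently, put $\sum_i\max\{h(V_i),0\}$ on the right-hand side, together with the analogous in-degree inequality. Your argument then goes through immediately, provided you use $2k$-arc-strength set by set rather than only after summing.

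Write $d_G(X)$ for the number of edges of $G$ with exactly one end in $X$. Then $d^+_D(X)=d_G(X)+d^+_M(X)\ge 2k$ gives $d_G(X)\ge 2k-d^+_M(X)\ge 2h(X)$. Similarly, $d^-_D(X)\ge 2k$ gives $d_G(X)\ge 2(k-d^-_M(X))$. Hence $\max\{h(X),0\}\le\frac12 d_G(X)$, and likewise for $k-d^-_M(X)$. Summing over a partition $\mathcal P$ yields
$e_G(\mathcal P)=\frac12\sum_i d_G(V_i)\ge\sum_i\max\{h(V_i),0\}$.

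The same per-set bound also covers the more elaborate families (partitions whose parts are refined by co-partitions) that arise in the fully general criterion for crossing supermodular demands. In each such family, every edge of $G$ is counted on the right-hand side at least half as often as in the corresponding degree sum. So whichever exact version of Frank's theorem you cite, check its hypothesis in that version; this bound is what makes the check go through.
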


Motivated by this result and conversations with Jackson,
 Thomassen posed the following conjecture in \cite{thomassen1989}.

\begin{conjecture}\label{conj1}
(Jackson and Thomassen \cite{thomassen1989}) Every $2k$-strong digraph contains a spanning $k$-strong oriented graph.
\end{conjecture}

Since a digraph is strong if and only if it is 1-arc-strong, it follows from Theorem \ref{thm:mixedkarcstrong} that the conjecture holds for $k=1$. Already for $k=2$  the conjecture is open for general digraphs, while  Theorem \ref{thm:2-strongor} implies that Conjecture \ref{conj1} holds for $k=2$ when $D$ is a symmetric digraph. 

{In fact, the following natural weakening of Conjecture \ref{conj1}  is open for general digraphs already for $k=2$. By Theorem \ref{thm:orientsymmetric} the conjecture holds for all $k$ in the case of symmetric digraphs.}

\begin{conjecture}\label{conj2}
There exists a function $f(k)$ so that every $f(k)$-strong digraph on at least $2k+1$ vertices contains a spanning $k$-strong oriented graph.
\end{conjecture}

Clearly $f(k)\geq g(k)\geq 2k$ must hold for general digraphs. A digraph is {\bf semicomplete} if it has no pair of non-adjacent vertices. A {\bf tournament} is an orientation of a complete graph, i.e. a semicomplete digraph with no directed 2-cycles.
 Bang-Jensen and Jordán \cite{bangDM310}  proved  that every 3-strong semicomplete digraph on at least 5 vertices contains a spanning 2-strong tournament. Based on this, they conjectured that $f(k)=2k-1$ for semicomplete digraphs.

 \begin{conjecture}\label{conj:SDcase}\cite{bangDM310}
     Every $(2k-1)$-strong semicomplete digraph on at least $2k+1$ vertices contains a spanning $k$ strong tournament.
 \end{conjecture}

 They also constructed an infinite family of $(2k-2)$-strong semicomplete digraphs with no spanning $k$-strong subtournament. Conjecture \ref{conj:SDcase} was verified for $k=3$ in \cite{wangDM347}.

A digraph is {\bf locally semicomplete} if the out-neighbours and the in-neighbours of every vertex induce a semicomplete digraph.
 Bang-Jensen (see Theorem 4.5 in \cite{bangJGT14}) proved that every $5k$-strong locally semicomplete digraph contains a spanning $k$-strong tournament. Extending the proof technique, Guo was able to improve this to $3k-2$.

\begin{theorem}\cite{guoDAM79}
\label{Thm:LSDcase} 
For every positive integer $k$, every $(3k - 2)$-strong locally semicomplete digraph contains a spanning $k$-strong local tournament.
\end{theorem}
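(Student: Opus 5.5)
This is Guo's theorem: every $(3k-2)$-strong locally semicomplete digraph $D$ contains a spanning $k$-strong local tournament. I would prove it with Bang-Jensen's technique from \cite{bangJGT14}, sharpened to the bound $3k-2$. The plan splits according to the structure theorem for locally semicomplete digraphs. A connected locally semicomplete digraph that is not semicomplete is of one of two kinds. Either it is round decomposable, with a strong round decomposition $D=R[S_1,\dots,S_\alpha]$ where each $S_i$ is strong semicomplete and $R$ is a round local tournament. Or it is non-round-decomposable, in which case it has a special structure with bounded independence and is close to semicomplete. The semicomplete case itself would be handled separately, by the Bang-Jensen--Jord\'an type argument extended to general $k$.

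\emph{Round decomposable case.} Here $D$ is $(3k-2)$-strong. Every minimal separator of a round decomposable digraph is the union of consecutive $S_i$'s lying between two "far apart" blocks. Hence the high connectivity forces each set $S_{i+1}\cup\dots\cup S_{j}$ reachable from $S_i$ in one step to be large.

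Inside each $S_i$ I would keep an arbitrary tournament orientation; between different $S_i,S_j$ there are no 2-cycles, since $R$ is oriented. This shows that 2-cycles occur only inside the semicomplete pieces. The resulting local tournament $T=R[T_1,\dots,T_\alpha]$ must then be checked to be $k$-strong. The check is easier because arcs between blocks are complete, so deleting $k-1$ vertices cannot destroy the round structure. The one exception is when some $S_i$ has at most $k-1$ vertices; in that case the required path goes around through neighbouring blocks, using the large out-neighbourhoods.

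So in the round case the argument reduces to counting. A separator of $T$ of size $<k$ would give a separator of $D$ of size $<3k-2$, because the separators of $T$ and $D$ coincide except for separations internal to one $S_i$. Those internal separations are bypassed through consecutive blocks, each of which has at least $k$ surviving vertices.

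\emph{Non-round-decomposable and semicomplete cases.} These I expect to be the main obstacle. For a semicomplete $D$, I would take a $k$-strong sub-structure and extend it. Concretely, I would find $k$ disjoint "linking" paths, either via Menger or via a spanning $k$-strong sub-tournament on a large core, and orient the remaining 2-cycles greedily. Each new vertex must receive $k$ in- and $k$ out-neighbours in the core. The connectivity $3k-2$ leaves slack of about $k-1$ for deleted vertices plus $k-1$ for the sacrificed arc directions, and this accounting is where the constant $3$ should come from.

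Concretely, I would first prove a lemma: if $x$ has at least $2k-1$ in- and out-neighbours in a $k$-strong tournament $T'$, then $x$ can be added while keeping $k$-strongness. This holds because we may choose orientations of 2-cycles to $T'$ so that $x$ keeps at least $k$ arcs each way. I would then order the vertices outside a core so that each has enough neighbours in the previous ones, exploiting $3k-2$ connectivity; the difficult part is producing such an ordering and the initial core. For the non-round-decomposable local case I would reduce to the semicomplete case by the known structure, namely that it contains a large semicomplete subdigraph dominating the rest.
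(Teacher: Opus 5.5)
\textbf{There is a genuine gap: the semicomplete and non-round-decomposable cases are not proved.} The paper does not prove this statement. It quotes it from Guo and says that Guo reached $3k-2$ by extending Bang-Jensen's technique, the extremal arc-reversal scheme the paper itself uses in the proofs of Theorems \ref{extended} and \ref{thm22}.

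\textbf{Round decomposable case.} This part is correct in substance. Your stated justification (each block keeps at least $k$ surviving vertices) is unfounded, because blocks may be singletons. The clean argument is Lemma \ref{scd-com}. Since $R$ is a strong oriented graph, $\alpha\ge 3$, so $\kappa(D_0)=\kappa(D)$ for the arcless extension $D_0$. Then $D_0\subseteq T\subseteq D$ gives $\kappa(T)=\kappa(D)\ge 3k-2$ for any choice of tournaments $T_i$ on the $S_i$.

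\textbf{Semicomplete and non-round-decomposable cases.} These carry the real content of the theorem, and your plan for them does not close.
\begin{itemize}
\item The core-plus-greedy plan is circular. A $k$-strong core is a spanning $k$-strong tournament of some $D[C]$ with $|C|\ge 2k+1$, and taking $C=V$ is the theorem itself. You give no way to find a smaller core together with an ordering in which every later vertex has at least $k$ in- and $k$ out-neighbours in $D$ among its predecessors. Peeling off high-degree vertices leaves a set about which the $(3k-2)$-connectivity of $D$ says nothing.
\item The accounting meant to produce $3k-2$ is heuristic.
\item Your extension lemma is off by one. If $x$ has exactly $2k-1$ neighbours in $T'$, all via 2-cycles, it cannot receive $k$ arcs in each direction.
\item For the non-round-decomposable case, the structural description you invoke is too vague to support a reduction. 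A spanning $k$-strong tournament on a semicomplete part also says nothing about separators of $D$ that cut across parts.
\end{itemize}

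\textbf{The missing idea, which needs no structure theorem.} Induct on $k$. Take a spanning $(k-1)$-strong orientation $T$ of $D$. Any orientation of a locally semicomplete digraph is a local tournament, since adjacencies are kept and $N^{\pm}_T(v)\subseteq N^{\pm}_D(v)$. Choose $T$ to minimize the number of minimum separators, and then the number of strong components of $T-S$. Let $U$ be the terminal component and $W$ the rest. Then every arc of $D$ from $U$ to $W$ lies in a 2-cycle $xyx$ with $x\in W$, $y\in U$ and $xy\in A(T)$.

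Local semicompleteness now gives length-2 paths directly:
\begin{itemize}
\item Each $z\in N^+_T(x)\cap W$ is adjacent to $y$, since both lie in $N^+_D(x)$; hence $zy\in A(T)$.
\item Each $z\in N^-_T(y)\cap U$ is adjacent to $x$, since both lie in $N^-_D(y)$; hence $xz\in A(T)$.
\end{itemize}
So $d^+_{T[W]}(x)+d^-_{T[U]}(y)\ge k$ yields $k$ internally disjoint $(x,y)$-paths avoiding $xy$. Reversing $xy$ via Lemma \ref{keylemma} then contradicts the extremal choice of $T$.

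What remains is a counting argument (Menger plus degree bounds, using that $D$ is $(3k-2)$-strong and $|S|=k-1$) that produces such a 2-cycle with enough internally disjoint paths. The bound $3k-2$ has to be earned in that count, not in a vertex-by-vertex extension.
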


A digraph is {\bf quasi-transitive} is it contains no induced directed path of length 2.
From Theorem \ref{Thm:LSDcase} and the structure theorem for quasi-transitive digraphs (see Theorem 2.7.5 in \cite{book}) one can derive the following.

\begin{theorem}\cite[Corollary 11.10.6]{book}
    For every positive integer $k$, every $(3k - 2)$-strong quasi-transitive digraph contains a spanning $k$-strong extended tournament.
\end{theorem}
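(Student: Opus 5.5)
We derive the statement from Theorem~\ref{Thm:LSDcase} together with the structure theorem for quasi-transitive digraphs (Theorem 2.7.5 in \cite{book}). Let $D$ be a $(3k-2)$-strong quasi-transitive digraph. Since $D$ is strong, the structure theorem gives a strong semicomplete digraph $S$ on vertices $s_1,\dots,s_t$ with $t\ge 2$, such that $D=S[Q_1,\dots,Q_t]$. Each $Q_i$ is either a single vertex or a non-strong quasi-transitive digraph. If $t=1$ then $D$ is not strong, so this case does not occur.

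\textbf{Step 1: pick representatives.} Choose one vertex $q_i\in Q_i$ for each $i$. The subdigraph induced by $\{q_1,\dots,q_t\}$ is isomorphic to $S$, hence semicomplete. I would rather work with a larger locally semicomplete spanning structure. Consider the extended semicomplete digraph $D'=S[\overline{K}_{n_1},\dots,\overline{K}_{n_t}]$ obtained from $D$ by deleting all arcs inside each $Q_i$. Deleting these arcs does not reduce vertex-connectivity below $3k-2$. Indeed, if $X$ with $|X|\le 3k-3$ separated $D'$, then some set $Q_i\setminus X$ and some $Q_j\setminus X$ ($i\ne j$) would lie in different parts. But the arcs between distinct $Q_i$'s are the same in $D$ and $D'$, so this reduces to connectivity of $S$ weighted by the class sizes, and the same $X$ would separate $D$. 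The degenerate case where all surviving vertices lie in a single $Q_i$ needs care: there, $D-X$ also has no arcs leaving that class, and since $D$ is $(3k-2)$-strong, $|D|\ge 3k-1$ still forces $D'-X$ to be strong unless it is a single vertex. So $D'$ is a $(3k-2)$-strong extended semicomplete digraph. The target, a spanning extended tournament, must come from $D'$ anyway, since any extended tournament spanning subdigraph of $D$ can be taken without arcs inside the classes.

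\textbf{Step 2: reduce to the locally semicomplete case.} Extended semicomplete digraphs are not locally semicomplete in general, so Theorem~\ref{Thm:LSDcase} does not apply directly. This is the main obstacle. My plan is to use the version in \cite{book}: there the proof of Theorem~\ref{Thm:LSDcase} goes through the round decomposition of locally semicomplete digraphs, and the argument only uses that vertices in the same class have identical neighbourhoods. Concretely, I would apply Guo's argument to $S$ with vertex weights $n_i=|Q_i|$. I would select an orientation of each 2-cycle $s_is_j$ of $S$ and orient all arcs between $Q_i$ and $Q_j$ consistently with it. The resulting digraph is $T[\overline{K}_{n_1},\dots,\overline{K}_{n_t}]$ for a spanning subtournament $T$ of $S$.

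The question is then whether $T[\dots]$ is $k$-strong. A separator of $T[\dots]$ meets whole classes except possibly one, so this becomes a weighted connectivity condition on $T$: every weighted vertex cut of $T$ has weight at least $k$. Guo's proof, run with weights (each class counting as $n_i$ vertices), should yield this from the fact that every weighted cut of $S$ has weight at least $3k-2$. I expect verifying that Guo's inductive/ordering argument survives the weighting to be the only nontrivial check. The alternative fallback is to blow up each $s_i$ into a transitive tournament on $Q_i$, which makes $D'$ locally semicomplete-like, then apply Theorem~\ref{Thm:LSDcase} and delete the arcs within classes again.
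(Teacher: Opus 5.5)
\textbf{There is a genuine gap.} Your Step~1 is correct (it is essentially Lemma~\ref{scd-com}). But Step~2, which carries the whole content, is only a hope, and the weighted statement you want to extract from Guo's proof is false.

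Take $D=\stackrel{\leftrightarrow}{K}_2[TT_n,TT_n]$: two transitive tournaments of order $n\ge 3k-2$, with every vertex of one forming a 2-cycle with every vertex of the other.
\begin{itemize}
\item $D$ is an $n$-strong semicomplete, hence quasi-transitive, digraph of exactly the form $S[Q_1,Q_2]$ that you take from the structure theorem.
\item Your $D'$ is $\stackrel{\leftrightarrow}{K}_{n,n}$, which is $n$-strong.
\item Yet for both tournaments $T$ on $V(S)$, the digraph $T[\overline{K}_n,\overline{K}_n]$ is not even strong.
\end{itemize}
Taking $t\ge 3$ does not help. With $S=\stackrel{\leftrightarrow}{K}_4$ and classes $TT_N,TT_N,K_1,K_1$ one has $\kappa(D')=N+2$, but every class-consistent orientation is separated by the two singleton classes.

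So no weighted version of Guo's argument can work for an arbitrary weighted $S$. By passing to $D'$ you have discarded exactly the arcs that are needed. For the same reason, your remark that a spanning extended tournament can be taken without arcs inside the classes is false.

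Your fallback fails for the same reason. Guo's spanning tournament of the blown-up digraph need not be constant between two classes joined by 2-cycles. So it is not an extension of a tournament, and Lemma~\ref{scd-com} does not let you delete the arcs inside the classes.

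\textbf{The missing idea} is the one place where quasi-transitivity really enters. If $s_is_js_i$ is a 2-cycle of $S$, then for distinct $x,z\in Q_i$ and any $y\in Q_j$ we have $x\to y\to z$, so $x$ and $z$ are adjacent. Hence every class on a 2-cycle of $S$ is semicomplete, and you can split it into singleton classes. (The canonical decomposition, given by the components of the complement of $UG(D)$, already has this property.) Afterwards every class $Q_i$ with $|Q_i|\ge 2$ is joined to the rest of $D$ by single arcs only. The argument then runs as follows.
\begin{enumerate}
\item Let $D^{\#}\supseteq D$ be obtained by making each such $Q_i$ complete. It is a $(3k-2)$-strong semicomplete digraph.
\item Theorem~\ref{Thm:LSDcase} gives a spanning $k$-strong local tournament of $D^{\#}$. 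Adding arcs completes it to a spanning $k$-strong tournament $T^{\#}$ of $D^{\#}$.
\item All arcs between a large class and the rest are single arcs of $D^{\#}$, so they lie in $T^{\#}$. Hence the large classes are modules of $T^{\#}$, and the quotient is a strong tournament on at least two, hence at least three, vertices.
\item By Lemma~\ref{scd-com} you can delete the arcs inside the large classes and remain $k$-strong.
\end{enumerate}
The result is a spanning extension of a tournament contained in $D$, since its arcs inside the former 2-cycle classes are arcs of $D$. This is presumably the short derivation from Theorem~\ref{Thm:LSDcase} and the structure theorem that the paper alludes to; no weighted version of Guo's proof is needed.
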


A digraph $D$ is {\bf composed} from its induced subdigraphs $H,H_1,H_2,\dots{},H_h$ if we have $V(D)=\bigcup_{j\in [h]} V(H_j)$, $D$ contains all arcs of each $H_i$ and we can label $V(H)=\{v_1,v_2,\dots{},v_h\}$ so that $v_i\in V(H_i)$ and the adjacency between each pair of vertices $x\in V(H_i),y\in V(H_j)$ is the same as between the vertices $v_i,v_j$ in $H$. We write $D=H[H_1,H_2,\dots{},H_h]$ to denote this composition and say that $D$ is {\bf decomposable}.
  We call $D$ an {\bf extension of } $H$ is $D=H[H_1,\dots{},H_h]$ where each $H_i$ is an arcless digraph.
   A {\bf semicomplete composition} is a composition $D=H[H_1,H_2,\dots{},H_h]$ where $H$ is a semicomplete digraph. An {\bf extended semicomplete digraph} is an extension of a semicomplete digraph. In particular, every extended semicomplete digraph is a semicomplete composition.
  
  A digraph is \textbf{semicomplete multipartite} if it is obtained from a complete multipartite graph by replacing every edge by an arc or a pair of opposite arcs.  
  In particular, every (extended) semicomplete digraph is a semicomplete multipartite digraph. Finally, a {\bf semicomplete split digraph} is a semicomplete multipartite digraph where only one of the partite (independent) sets may have size more than one.

 In this paper, we show the existence of $f(k)$ for all $k$ when $D$ belongs to  the three digraph classes, each  of which are generalizations of semicomplete digraphs, namely  extended semicomplete digraphs, semicomplete split digraphs and semicomplete compositions for semicomplete digraphs on at least 3 vertices. This provides additional support for Conjecture \ref{conj2}. 

Extended semicomplete digraphs constitute a very important subclass of the semicomplete multipartite digraphs. They play a crucial role in both structural and algorithmic properties of semicomplete compositions, in particular in  the solution of the hamiltonian path and cycle problems for  quasi-transitive digraphs. For an illustration of this, see  e.g. Section 6.7 in \cite{book}.

The main results of this paper are as follows.
 
\begin{theorem}\label{extended}
    Let $k$ be a positive integer. Every $(4k+1)$-strong extended semicomplete digraph contains a spanning $k$-strong oriented subdigraph.
\end{theorem}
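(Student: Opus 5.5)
Let $D=S[\overline{K}_{n_1},\dots,\overline{K}_{n_h}]$ be a $(4k+1)$-strong extended semicomplete digraph with partite sets $V_1,\dots,V_h$, where $S$ is semicomplete on $h$ vertices. The plan is to reduce the problem to an orientation problem on a symmetric digraph, where Theorem \ref{thm:mixedkarcstrong} and arc-disjoint path arguments are available, and to exploit the fact that every two vertices in different parts are adjacent. Choose the orientation as follows. Every arc between $V_i$ and $V_j$ that is not in a 2-cycle is kept, so only the 2-cycles must be resolved. These are exactly the pairs $(V_i,V_j)$ with $v_iv_j$ a 2-cycle in $S$, and each such pair induces a complete symmetric bipartite graph $\stackrel{\leftrightarrow}{K}_{n_i,n_j}$.

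\textbf{Step 1 (a highly connected core).} Since $D$ is $(4k+1)$-strong, each $V_i$ satisfies $|V(D)\setminus V_i|\ge 4k+1$, because $V(D)\setminus V_i$ separates any two vertices of $V_i$ or else $D$ is just an extension of a single vertex, which is impossible. I would first orient $S$ itself. $S$ is $(4k+1)$-strong only in a weighted sense: a vertex cut in $D$ corresponds to deleting whole parts, with weights $n_i$. The idea is to find an orientation of the 2-cycles of $D$ in which every vertex of $V_i$ receives the same out/in-neighbourhoods \emph{in proportion}. Concretely, I orient each symmetric $\stackrel{\leftrightarrow}{K}_{n_i,n_j}$ as a ``balanced'' bipartite tournament: split $V_i=V_i^1\cup V_i^2$ and $V_j=V_j^1\cup V_j^2$ into near-halves, orient $V_i^1\to V_j^1\to V_i^2\to V_j^2\to V_i^1$. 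Then every vertex of $V_i$ keeps roughly half of $V_j$ as out-neighbours and half as in-neighbours.

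\textbf{Step 2 (checking $k$-strong).} Let $X$ with $|X|\le k-1$ be removed, and suppose the resulting oriented graph $D'-X$ has a partition $(A,B)$ with no arc from $A$ to $B$. Three facts then give a contradiction with $D$ being $(4k+1)$-strong.
\begin{itemize}
\item A vertex $a\in A$ in $V_i$ and a vertex $b\in B$ in $V_j$ with $i\ne j$ are adjacent in $D$, so the arc goes $b\to a$ in $D'$.
\item If $v_iv_j$ was a 2-cycle in $S$, the halving forces arcs both ways between $V_i\cap A$ and $V_j\cap B$ unless these sets lie inside specific halves.
\item A counting argument then shows that $A$ (or $B$) is contained, up to at most about $3k$ vertices, in a union of half-parts forming a separation of $D$. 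Together with $X$, these vertices yield a separator in $D$ of size at most $4k$.
\end{itemize}
I expect the bookkeeping to produce the constant $4k+1$. The factor $4$ arises because halving each 2-cycle bipartite graph can lose roughly half the connections on both sides, and the $+1$ covers the case $|V_i|$ odd or tiny parts.

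\textbf{Main obstacle.} The hard step is the case where the parts have very unequal sizes, in particular singleton parts adjacent via 2-cycles, where halving is impossible. For those I would instead appeal to Conjecture \ref{conj:SDcase}-type results; more safely, I would use Theorem \ref{thm:mixedkarcstrong} on an auxiliary digraph on the singleton parts, orienting that subgraph to preserve $2k$-arc-strength, and then argue that vertex cuts of size $<k$ in the final digraph induce small arc cuts there. If this fails, the fallback is Menger-based: find $k$ internally disjoint paths between consecutive parts, and orient their 2-cycles consistently along the paths before halving the rest.
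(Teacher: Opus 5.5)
\textbf{Verdict: there is a genuine gap.} Step~2 carries all the weight, and it is asserted rather than proved (``a counting argument then shows\dots'', ``I expect the bookkeeping to produce the constant $4k+1$''). It also cannot be carried out for a fixed explicit orientation once 2-cycles join parts of size one, or the leftover vertex of an odd part.

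Consider the case where every part is a singleton. Then $D$ is a $(4k+1)$-strong semicomplete digraph, and the statement asserts a spanning $k$-strong tournament. The known proofs of this special case (Bang-Jensen's $5k$ bound, Guo's Theorem~\ref{Thm:LSDcase}) rely on an inductive arc-reversal technique, not on an explicit rule. Your proposal gives no rule at all for orienting 2-cycles between singletons.

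None of the remedies you list for this ``main obstacle'' closes it:
\begin{itemize}
\item Conjecture~\ref{conj:SDcase} is open.
\item Applying Theorem~\ref{Thm:LSDcase} or Theorem~\ref{thm:mixedkarcstrong} to the subdigraph induced by the singleton parts fails. That subdigraph need not be highly connected, or even strong, because the $(4k+1)$-strength of $D$ may be carried by paths through the large parts.
\item Arc-strength does not control vertex cuts, so ``vertex cuts of size $<k$ induce small arc cuts'' is false. Take two regular tournaments $T_1\to T_2$ on $2m+1$ vertices each, plus a vertex $v$ with $T_2\to v\to T_1$. This is an $m$-arc-strong tournament in which $v$ is a cut vertex.
\item The Menger-based fallback is not well defined. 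Paths for different vertex pairs may traverse the same 2-cycle in opposite directions, and no single family of paths certifies $k$-strength for all pairs.
\end{itemize}

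\textbf{The missing idea is induction on $k$ via arc reversals (Lemma~\ref{keylemma}).} The paper uses halving like your Step~1 only to build an intermediate spanning semicomplete multipartite digraph $D'$:
\begin{itemize}
\item Each large part is split as $S_i^1,S_i^2$, plus a leftover vertex $S_i^3$ when $|S_i|$ is odd.
\item 2-cycles between large parts become your blown-up 4-cycle.
\item The set $C(S_i)$ of external vertices forming 2-cycles with all of $S_i$ is split into two halves oriented oppositely.
\item All 2-cycles among the singleton-like vertices $\bigcup_t S_t^3$, and those with $|C(S_i)|=1$, are deliberately \emph{kept}.
\end{itemize}

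After showing $D'$ is strong, the base case $k=1$ is Volkmann's Theorem~\ref{SPD}. For the inductive step:
\begin{enumerate}
\item Take a spanning $(k-1)$-strong orientation $T'$ of $D'$ with the fewest minimum separators and, for a minimum separator $S$, the fewest strong components of $T'-S$.
\item Split $V\setminus S$ into $W$ and $U$ with no $U\to W$ arcs in $T'$.
\item Use the $(4k+1)$-strength of $D$ together with $|S\cup S'|\le 2k-2$ to find $2k+2$ 2-cycles of $D$ between $U\setminus S'$ and $W\setminus S'$.
\item A near-tournament in-degree count (Claim~\ref{cycle-claim}) then yields a 2-cycle $xyx$ of $D'$ with $x\in W$, $y\in U$ and $k+1$ internally disjoint $(x,y)$-paths in $T'$.
\item Reversing the arc $xy$ of $T'$ keeps $(k-1)$-strength and contradicts the choice of $T'$.
\end{enumerate}
This mechanism is what resolves the 2-cycles your explicit rule cannot handle, and it is where the constant $4k+1$ actually comes from.
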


Using Theorem \ref{extended} and the fact that, by Lemma \ref{scd-com} below, the connectivity of a semicomplete composition $D=H[H_1,\dots, H_h]$, with $h\geq 3$ is the same as  the connectivity of the spanning extended semicomplete subdigraph $D_0$ which one obtains by deleting all arcs inside each $H_i$ we can derive the following.

\begin{theorem}\label{main1}
 For every integer $k \geq 1$, every $(4k+1)$-strong semicomplete composition  $D = H[S_1, \dots, S_h]$ {such that $h\geq 3$ or} \(|V(D) \setminus S_i| \geq 2k\) for all $S_i$ contains a spanning $k$-strong oriented subgraph.
\end{theorem}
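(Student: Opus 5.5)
The plan is to reduce the statement to Theorem~\ref{extended} by passing to the spanning extended semicomplete subdigraph $D_0$ of $D$, obtained by deleting all arcs inside each $S_i$. Then $D_0=H[\overline{K}_{|S_1|},\dots,\overline{K}_{|S_h|}]$ is an extension of the semicomplete digraph $H$, so it is an extended semicomplete digraph. If $D_0$ is $(4k+1)$-strong, Theorem~\ref{extended} gives a spanning $k$-strong oriented subdigraph of $D_0$. Since $D_0$ is a spanning subdigraph of $D$, this is also a spanning $k$-strong oriented subdigraph of $D$. So everything comes down to showing $\kappa(D_0)\ge 4k+1$ under either hypothesis.

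For the case $h\ge 3$, I would invoke Lemma~\ref{scd-com}, which (as announced in the paper) says that $\kappa(D)=\kappa(D_0)$ when $h\ge 3$. To see why this should hold, take a minimal separator $T$ of $D_0$ with $|T|\le 4k$. Then $D-T$ is strong while $D_0-T$ is not, so some arc of $D-T$ inside a part $S_i$ is needed for strong connectivity. Now let $x,y\in S_i\setminus T$, and suppose some vertex $z\notin S_i\cup T$ survives. Since $H$ is semicomplete, $z$ is adjacent to both $x$ and $y$, with the same orientation to each. Strong connectivity of $D-T$ should then let us reroute any $x$–$y$ connection through other parts. The delicate situation is when $T$ contains all of $V(D)\setminus S_i$. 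This forces $|V\setminus S_i|\le 4k$. In that case, I would argue that $D$ being $(4k+1)$-strong with $h\ge 3$ means $V\setminus S_i$ contains vertices of at least two parts, and a pair $x,y\in S_i$ still needs $4k+1$ internally disjoint paths. I expect this boundary bookkeeping to be the main obstacle, and it is exactly what Lemma~\ref{scd-com} handles, so I would simply cite it.

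For the case $h\le 2$ with $|V(D)\setminus S_i|\ge 2k$ for all $i$, the case $h=1$ is impossible, because then $V\setminus S_1=\emptyset$. So $h=2$, and $D_0$ is a semicomplete bipartite digraph with parts $S_1,S_2$. Here $D_0$ may fail to be $(4k+1)$-strong, so I would work directly with Theorem~\ref{extended}'s proof method, or more simply reduce as follows. Fix a $k$-strong spanning oriented subdigraph built on the bipartite skeleton, and use the inner digraphs $D[S_i]$ only where needed. Concretely, I would check whether $\kappa(D_0)\ge\min\{\kappa(D),|S_1|,|S_2|\}$-type bounds hold, which should follow by the same rerouting argument. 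The worry is that the needed bound could be only $2k$ rather than $4k+1$; if so, I would instead orient the $2$-cycles between $S_1$ and $S_2$ and inside each $S_i$ via the Jackson/Frank arc-strong orientation idea, namely Theorem~\ref{thm:mixedkarcstrong}, combined with the size condition $|S_j|\ge 2k$. This case, rather than $h\ge3$, is where I expect real difficulty.
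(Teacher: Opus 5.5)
Your $h\ge 3$ case follows the paper (Lemma \ref{scd-com}, then Theorem \ref{extended}). The case $h=2$, however, is a genuine gap: you list options without carrying any of them out, and the ones you name do not work. Theorem \ref{thm:mixedkarcstrong} only yields $k$-arc-strong orientations, not $k$-strong ones. Also $\kappa(D_0)=\min\{|S_1|,|S_2|\}$ may be as small as $2k$, so Theorem \ref{extended} is out of reach.

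The missing observation is that this case needs no machinery. Since $D$ is strong and $h=2$, $H$ must be a 2-cycle, so every vertex of $S_1$ forms a 2-cycle with every vertex of $S_2$. The hypothesis $|V(D)\setminus S_i|\ge 2k$ says exactly that $|S_1|,|S_2|\ge 2k$. Split each $S_i$ into $S_i^1,S_i^2$ with $|S_i^j|\ge k$, and keep only the arcs $S_1^1\to S_2^1\to S_1^2\to S_2^2\to S_1^1$, discarding everything inside $S_1$ and $S_2$. This spanning oriented subdigraph stays strong after deleting any $k-1$ vertices: all four classes remain nonempty, and every vertex dominates the whole next class. That is the paper's argument.

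On $h\ge 3$: the boundary case you worried about is not bookkeeping, and citing Lemma \ref{scd-com} does not settle it. In $D_0$ each $S_i$ is independent. So if $|S_i|\ge 2$, the set $V(D)\setminus S_i$ separates $D_0$, whence $\kappa(D_0)\le |V(D)\setminus S_i|$.

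For instance, take the complete digraph on $N+2$ vertices, viewed as the complete digraph on three vertices composed with a complete digraph on $N$ vertices and two single vertices. It is $(N+1)$-strong, yet its $D_0$ falls apart once the two single vertices are deleted. So the lemma as quoted is false. Your rerouting argument is valid whenever some vertex $z\notin S_i\cup T$ survives. It therefore gives $\kappa(D_0)\ge 4k+1$ only under the extra hypothesis $|V(D)\setminus S_i|\ge 4k+1$ for every $S_i$ with $|S_i|\ge 2$. The paper relies on the same step.

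Without such a hypothesis, this reduction cannot reach the $h\ge 3$ claim at all. Take any $(4k+7)$-strong digraph $F$ and add two vertices joined by 2-cycles to everything. The result is a $(4(k+2)+1)$-strong composition with $h=3$. The claim would then give $F$ a spanning $k$-strong oriented subdigraph, i.e.\ Conjecture \ref{conj2} with $f(k)\le 4k+7$, which the paper itself describes as open.
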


Note that in the case $h=2$ of a semicomplete composition $D=H[H_1,H_2]$ where $H_2$ has $r<k$ vertices, the connectivity of $D$ is just $r$ larger than the connectivity of the arbitrary digraph $H_1$. Thus, if we could prove the existence of $f(k)$ for  every $k$ in this subclass of digraphs, we would have proved Conjecture \ref{conj2}.

Our final result is for semicomplete split digraphs.
\begin{theorem}\label{thm22}
     Let $k$ be a positive integer. Every $5k$-strong semicomplete split digraph $D=(V_1,V_2,A)$ contains a spanning \( k \)-strong oriented semicomplete split digraph.
\end{theorem}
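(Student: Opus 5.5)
Here $D=(V_1,V_2,A)$ with $V_1$ the large independent set and $V_2$ inducing a semicomplete digraph. Every vertex of $V_1$ is adjacent to every vertex of $V_2$. The plan is to orient the 2-cycles separately inside $D[V_2]$ and between $V_1$ and $V_2$, so that the resulting oriented split digraph $D'$ is $k$-strong.

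We first record two facts that follow from $5k$-strong connectivity of $D$. Since $V_1$ is independent, every vertex $x\in V_1$ has $d^+(x),d^-(x)\ge 5k$, and all these neighbours lie in $V_2$. Also $|V_2|\ge 5k+1$: if $|V_1|\ge 2$, then deleting $V_2$ disconnects $D$, and otherwise $D$ is semicomplete.

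We reduce $k$-strong connectivity of $D'$ to a Menger-type condition. It suffices to show that for every $S\subseteq V$ with $|S|\le k-1$, the digraph $D'-S$ is strong. This in turn splits into two requirements:
\begin{enumerate}
\item[(a)] $D'[V_2]-S$ is strong.
\item[(b)] Every $x\in V_1\setminus S$ has an out-neighbour and an in-neighbour in $V_2\setminus S$.
\end{enumerate}
Requirement (b) holds as long as each $x\in V_1$ keeps at least $k$ out-arcs and $k$ in-arcs to $V_2$ in $D'$. Requirement (a) needs $D'[V_2]-(S\cap V_2)$ to be strong. Since the semicomplete part is strong after deleting $S\cap V_2$, every vertex of $V_1$ then reaches and is reached by it.

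The proof therefore needs an orientation $D'$ such that:
\begin{enumerate}
\item[(i)] every $x\in V_1$ keeps at least $k$ out-neighbours and $k$ in-neighbours in $V_2$;
\item[(ii)] $D'[V_2]$ is a $k$-strong tournament.
\end{enumerate}
Condition (i) is easy to achieve. For each $x$, choose $k$ vertices of $N^+(x)$ and $k$ vertices of $N^-(x)$, disjoint, which is possible since each set has size $\ge 5k$. Orient each 2-cycle at $x$ according to this choice, and orient the remaining 2-cycles at $x$ arbitrarily.

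The main obstacle is (ii). $D[V_2]$ need not be $5k$-strong, or even highly connected, because paths between vertices of $V_2$ may pass through $V_1$. For example, $D[V_2]$ might be far from strong. Hence we cannot simply apply a semicomplete result such as Conjecture \ref{conj:SDcase}, which in any case is not proven.

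To handle this, we relax (ii): instead of requiring $D'[V_2]$ to be $k$-strong, we let the arcs through $V_1$ carry connectivity. Concretely, we aim for the following property: for every $S$ with $|S|\le k-1$ and any partition of $V_2\setminus S$ into nonempty sets $X,Y$, $D'-S$ contains an arc from $X$ to $Y$, or a path $X\to x\to Y$ with $x\in V_1\setminus S$.

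In $D$ there are at least $5k-|S|$ internally disjoint $X$–$Y$ paths avoiding $S$, each of length at most 2 when passing through $V_1$. Our orientation must preserve about $k$ of these for every cut simultaneously. The first approach would be to mimic the Bang-Jensen / Guo technique. Take a $5k$-strong structure, decompose $D[V_2]$ into its strong components with acyclic ordering $T_1,\dots,T_m$, and note that every backward connection must go through vertices of $V_1$ or through 2-cycles. From the $5k$-connectivity of $D$, extract $2k$ internally disjoint "backward" paths from $T_m$ to $T_1$ (length $\le 2$ via $V_1$), and fix their orientations first, reserving them. Then take $2k$ disjoint forward structures, and orient inside each strong component using Theorem \ref{thm:mixedkarcstrong}-style arguments plus the remaining budget ($5k-4k=k$). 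Then we would check by Menger that any $(k-1)$-set $S$ misses at least one reserved backward path and one forward route, so that $D'-S$ is strong. The delicate accounting for how many paths each vertex of $V_1$ must keep — done consistently with (i), using the budget $5k=k+k+2k+k$ — is where I expect most of the work to lie.
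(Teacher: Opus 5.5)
Your reduction is sound: condition (i) together with your cut condition (every partition $X,Y$ of $V_2\setminus S$ is crossed in $D'-S$ by an arc or by a path $X\to x\to Y$ with $x\in V_1\setminus S$) does imply that $D'-S$ is strong. You are also right that $D[V_2]$ need not be strong. But the proposal stops exactly where the proof has to begin: no orientation satisfying the cut condition for all $S$ at once is constructed, and the sketched construction fails as stated.

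\textbf{Arc versus vertex connectivity.} Theorem \ref{thm:mixedkarcstrong} gives arc-strong orientations, which say nothing about vertex cuts. This is not a technicality. If $|V_1|\le 1$, then $D$ is just a $5k$-strong semicomplete digraph and $D[V_2]$ is a single strong component, so there are no backward paths to reserve. Your plan then collapses to ``orient this component'', which is the known theorem that $5k$-strong semicomplete digraphs have spanning $k$-strong tournaments. That theorem is itself proved by the arc-reversal argument of Lemma \ref{keylemma}, not by a direct construction, and the sharper Conjecture \ref{conj:SDcase} is still open.

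\textbf{The final check.} Knowing that a $(k-1)$-set $S$ misses one reserved backward path and one forward route does not make $D'-S$ strong. You need the cut condition for every partition of $V_2\setminus S$, and most partitions do not respect the order $T_1,\dots,T_m$.

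\textbf{Backward paths of length $2$.} Backward paths of length $2$ from $T_m$ to $T_1$ need not exist. Let $V_2=\{v_1,\dots,v_n\}$ be transitive with source $v_1$ and sink $v_n$. Let each $x\in V_1$ either satisfy $v_1\mapsto x$ and form 2-cycles with $v_2,\dots,v_n$, or satisfy $x\mapsto v_n$ and form 2-cycles with $v_1,\dots,v_{n-1}$. With many vertices of each type and $n$ large, this is $5k$-strong, yet there is no path $v_n\to x\to v_1$. (Also, distinct strong components of $D[V_2]$ span no 2-cycles, and you only get $|V_2|\ge 5k$, not $5k+1$.)

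\textbf{The missing idea.} What the Bang-Jensen/Guo technique actually does, and what the paper does, is not to build the orientation directly but to induct on $k$ and argue by extremality.
\begin{itemize}
\item Take a spanning $(k-1)$-strong oriented split subdigraph $T$ that minimizes first the number of minimum separating sets, then the number of strong components of $T-S$.
\item Let $U$ be the terminal component and $W$ the union of the others, with $|W|\ge |U|$. Every arc of $D$ from $U$ to $W$ lies in a 2-cycle.
\item It suffices to find such a 2-cycle $xyx$ ($x\in U$, $y\in W$) with $k$ internally disjoint $(y,x)$-paths in $T_{\{\stackrel{\leftarrow}{yx}\}}$. Lemma \ref{keylemma} then keeps the reversed digraph $(k-1)$-strong with fewer minimum separators or fewer components, a contradiction.
\end{itemize}
The paths come from tournament degree counting inside $V_2$.
\begin{itemize}
\item If $|U|\ge 4k$: $4k$ disjoint $U$--$W$ 2-cycles and pigeonhole give, say, $x\in U\cap V_2$ with $k$ in-neighbours in $U\cap V_2$, all dominated by its partner $y$.
\item If $|U|<4k$: one first forces $|U\cap V_1|\le 2k$ and $|U\cap V_2|\le 2k$. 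One then uses the large out-degrees of suitable vertices of $W$, reversing several arcs $w_iy$ before $yx$ when $y\in V_1$.
\end{itemize}
That counting is where the constant $5k$ comes from. Your budget split $5k=k+k+2k+k$ has no counterpart to it.
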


The rest of the paper is organized as follows. In section 2, we collect and establish some lemmas. We then prove Theorems \ref{extended}-\ref{thm22} in sections 3, 4 and 5, respectively.

\section{Notation and preliminaries}

Notation not introduced here is consistent with \cite{book}. Let $\mathbb{N}$ be the set of natural numbers. For an integer $i$, we use the notation  $\boldsymbol{[i]} = \{1, \ldots, i\}$. The digraphs considered in this paper are always simple, i.e., without loops and multiple arcs. Let $D$ be a digraph with vertex set $V(D)$ and arc set $A(D)$. We use $\boldsymbol{|D|}$ to represent the number of vertices in $D$. For two vertices $x, y \in V(D)$, we denote the arc from $x$ to $y$ as $xy$.
An arc $xy$ is a {\bf single arc} of $D$ if $D$ does not contain the opposite arc $yx$, i.e. $x$ and $y$ do not induce a 2-cycle. Suppose \( xy \) is a single arc of \( D \). We denote by \( \boldsymbol{D_{\{\stackrel{\leftarrow}{xy}\}}} \) the digraph obtained from \( D \) by deleting the arc \( xy \) and adding the new arc \( yx\). For a vertex $x$ of $D$, we define $\boldsymbol{N^+_D(x)} = \{y \mid xy \in A(D)\}$ (resp. $\boldsymbol{N^-_D(x)} = \{y \mid yx \in A(D)\}$) as the \textbf{out-neighborhood} (resp. \textbf{in-neighborhood}) of $x$, and $\boldsymbol{d^+_D(x)} = |N^+_D(x)|$ (resp. $\boldsymbol{d^-_D(x)} = |N^-_D(x)|$) as the \textbf{out-degree} (resp. \textbf{in-degree}) of $x$.  
For a set $X \subseteq V(D)$, the subgraph of $D$ induced by $X$ is denoted by $\boldsymbol{D[X] }$, and the digraph obtained from $D$ by deleting $X$ and all arcs incident with $X$ is denoted by $\boldsymbol{D \setminus X}$. For disjoint sets \(X,Y\subseteq V(D)\):
\begin{itemize}
\item \(\boldsymbol{X \to Y}\) means every vertex in \(X\) dominates every vertex in \(Y\);
\item \(\boldsymbol{X \leftrightarrow Y}\) means that $X \to Y$ and $Y\to X$;
\item $\boldsymbol{X \mapsto Y}$ means that $X \to Y$, while no arc exists in the reverse direction from $Y$ to $X$.
\end{itemize}


Suppose below that $P = x_1x_2 \cdots x_t$ is a directed path of $D$. We say that  $x_1$ (resp. $x_t$) is  the \textbf{initial} (resp. \textbf{terminal}) vertex of $P$ and that $P$ is an $(x_1,x_t)$-path. The \textbf{length} of $P$ is the number of arcs, and we denote a path of length $l$ as an \textbf{$\boldsymbol{l}$-path}.  Suppose that \(Q = x_t x_{t + 1}\cdots x_m\) is a path in \(D\) that is internally disjoint from $P$, and we denote the \textbf{concatenation} of \(P\) and \(Q\) by \(\boldsymbol{P \circ Q}\), which is the directed path $x_1x_2\cdots x_tx_{t+1}\cdots x_m$. 
A directed multigraph \(D=(V,A)\) is \textbf{minimally \(k\)-strong} if \(D\) is \(k\)-strong, but for every arc \(e\in A\), \(D-e\) is not \(k\)-strong. 

\subsection{Preliminary Lemmas}
\leavevmode\par 
The following {important} lemma implies that almost all minimally $k$-strong decomposable digraphs are subdigraphs of extensions of digraphs. We use the relationship between extended semicomplete digraphs and semicomplete compositions to prove Theorem \ref{extended}.

\begin{lemma}\label{scd-com}\cite{jbj1999}
 Let $D = H[S_1, \dots, S_h]$ be a composition, where $H$ is a strong digraph on $h \geq 2$ vertices. Let $D_0 = H[S_1^0, \dots, S_h^0]$ be the digraph obtained from $D$ by deleting every arc which lies inside $S_i$, for all $i \in [h]$. If $H$ has at least three vertices, then $D$ is $k$-strong if and only if $D_0$ is $k$-strong.
\end{lemma}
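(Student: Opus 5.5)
One direction is immediate. $D_0$ is a spanning subdigraph of $D$, so adding arcs cannot destroy $k$-strong connectivity, and $D_0$ $k$-strong implies $D$ $k$-strong. The work is the converse. I would argue by contradiction: suppose $D$ is $k$-strong but $D_0$ is not. Note $|V(D_0)|=|V(D)|\ge k+1$, so there is a set $X$ with $|X|\le k-1$ such that $D_0\setminus X$ is not strong. I will show that $D\setminus X$ is then also not strong, which contradicts the $k$-strong connectivity of $D$.

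Equivalently, it suffices to show the following: for every $X\subseteq V(D)$, if $D_0\setminus X$ is not strong then $D\setminus X$ is not strong. Every vertex of $D\setminus X$ is in some $S_i\setminus X$. Call $i$ surviving if $S_i\not\subseteq X$, and let $H'$ be the subdigraph of $H$ induced by the surviving indices. Then $D_0\setminus X$ is the extension $H'[\overline{K}_{n_1'},\dots]$ and $D\setminus X$ is $H'[S_1\setminus X,\dots]$. The argument splits by the number of surviving indices.

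Case 1: at least two surviving indices. In an extension of $H'$ with at least two parts, each vertex of $S_i\setminus X$ has exactly the same in- and out-neighbours as its image $v_i$. So $D_0\setminus X$ is strong if and only if $H'$ is strong, because any walk in $H'$ lifts to a walk visiting any chosen representatives. Here every part $S_i\setminus X$ has a neighbour outside itself, since $H'$ has at least two vertices and, being strong, no isolated vertices. The same lifting criterion shows that $D\setminus X$ is strong exactly when $H'$ is strong, since arcs inside parts are irrelevant once $H'$ is strong. If $H'$ is not strong, then a partition of $V(H')$ with no arc back lifts to a partition of $D\setminus X$ with no arc back. So the two digraphs are strong simultaneously.

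Case 2: exactly one surviving index $i$. Then $D_0\setminus X=S_i^0\setminus X$ is arcless and not strong whenever it has at least two vertices, while $D\setminus X=D[S_i\setminus X]$ may well be strong. This is the main obstacle, and it is where $h\ge 3$ is used. Here $X\supseteq\bigcup_{j\ne i}S_j$, so $k-1\ge|X|\ge |V(D)|-|S_i|$. Use $k$-strong connectivity of $D$ instead. Since $H$ is strong with $h\ge3$, $v_i$ has an in-neighbour $v_a$ and an out-neighbour in $H$. Pick $j\ne i$ and consider $X'=\bigcup_{l\ne i,j}S_l$. Then $D\setminus X'$ is the composition of the two-vertex digraph $H[\{v_i,v_j\}]$, and $|X'|\le |X|-|S_j|\le k-2$. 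So $D\setminus X'$ must be strong, and this forces $v_iv_j$ and $v_jv_i$ both to be arcs. Since $j$ was arbitrary, $S_i\leftrightarrow S_j$ for every $j\ne i$.

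Now use $h\ge3$: pick two distinct $j,l\ne i$. The digraph $D$ is $k$-strong, hence every vertex has in- and out-degree at least $k$. The aim is a contradiction by locating a separator of size at most $k-1$ in $D$. I would take $Y=\bigcup_{m\ne i}S_m\setminus\{y\}$ for a single $y\in S_j$, so $|Y|\le k-2$. In $D\setminus Y$, all surviving vertices of $S_i$ connect through $y$ via $S_i\leftrightarrow S_j$, so this alone gives no contradiction. Instead, the cleaner route is to compare degrees directly. A vertex $u\in S_i$ with $D[S_i]$ strong is not automatically a problem, so I would recheck whether Case 2 actually requires $D_0\setminus X$ not strong with $|S_i\setminus X|\ge2$. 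Failing that, I would choose the separator to contain $S_i$ minus one vertex rather than the other parts. That is, I would show that $D_0$ can only be separated by sets $X$ meeting at least two parts. The idea is that if $D_0\setminus X$ is non-strong with a single surviving part, then $|V(D)\setminus S_i|\le k-1$, and one then finds a different separator $X''$ of $D_0$ with $|X''|\le|X|$ that leaves two parts surviving. Possible choices are swapping a vertex of $S_i\setminus X$ for a vertex of $X\cap S_j$, or keeping a vertex of $S_j$ and removing a vertex of $S_i$. This reduces Case 2 to Case 1, and the exchange argument is the step I expect to need the most care.
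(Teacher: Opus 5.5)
Your easy direction and your Case 1 are correct. When at least two parts survive, $D_0\setminus X$ and $D\setminus X$ are each strong exactly when $H'$ is strong, so they stand or fall together. The gap is Case 2, and it cannot be closed, because the statement is false precisely there. (The paper gives no proof of this lemma; it only cites it.)

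Here is a counterexample. Let $H$ be the complete digraph on three vertices, let $D[S_1]$ be a complete digraph on $n\ge 2$ vertices, and let $S_2=\{a\}$, $S_3=\{b\}$. Then $D$ is the complete digraph on $n+2$ vertices, hence $(n+1)$-strong. But $D_0\setminus\{a,b\}$ is the arcless digraph on $S_1$. So for $k=3$ the digraph $D$ is $k$-strong and $D_0$ is not.

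More generally, whenever $|S_i|\ge 2$ and $|V(D)\setminus S_i|\le k-1$, the set $X=V(D)\setminus S_i$ separates $D_0$, and the hypothesis $h\ge 3$ does nothing to exclude this. Your intermediate conclusion that $S_i\leftrightarrow S_j$ for all $j\neq i$ is valid, but the example satisfies it, so no contradiction can follow from it.

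The exchange step you plan is also impossible in principle. By your own Case 1, a separator $X''$ of $D_0$ with $|X''|\le k-1$ that leaves two parts would also separate $D$. So the claim that $D_0$ can only be separated by sets leaving at least two parts would imply that Case 2 never occurs, and the example refutes that.

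What your argument does prove, essentially verbatim, is a corrected statement: for any $h\ge 2$, $D_0$ is $k$-strong if and only if $D$ is $k$-strong and $|V(D)\setminus S_i|\ge k$ for every $i$ with $|S_i|\ge 2$. Necessity of the size condition is the separator described above. Sufficiency is your Case 1 together with one observation: under the condition, a set $X$ with $|X|\le k-1$ that leaves a single part leaves only one vertex, which is trivially strong. Note that the paper applies the lemma in the $h\ge 3$ case of Theorem~\ref{main1} without any such size condition, so that step inherits the same problem.
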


Menger's theorem is one of the cornerstone results in graph connectivity, and we use the following version of the theorem.
\begin{theorem}\label{menger}
\cite{menger} Let \( D \) be a \( k \)-strong digraph. Let \( x_1, x_2, \ldots, x_r, y_1, y_2, \ldots, y_s \) be distinct vertices of \( D \), and let \( a_1, a_2, \ldots, a_r \), \( b_1, b_2, \ldots, b_s \) be positive integers such that
\[
\sum_{i=1}^r a_i = \sum_{j=1}^s b_j = k.
\]
Then $D$ contains $k$ internally disjoint paths $P_1, P_2, \ldots, P_k$ with the property that precisely $a_i$ \textup{(}$b_j$\textup{)} of these start at $x_i$ \textup{(}end at $y_j$\textup{)}.
\end{theorem}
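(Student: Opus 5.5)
This version of Menger's theorem follows from the standard one, applied to an auxiliary digraph built by adding a super-source and a super-sink. Let $X=\{x_1,\dots,x_r\}$ and $Y=\{y_1,\dots,y_s\}$.

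\emph{Construction.} Form $D'$ from $D$ as follows. Add $a_i$ new vertices $x_i^1,\dots,x_i^{a_i}$, each with the single out-arc $x_i^jx_i$. Add $b_j$ new vertices $y_j^1,\dots,y_j^{b_j}$, each with the single in-arc $y_jy_j^\ell$. Add a new vertex $s$ dominating every $x_i^j$, and a new vertex $t$ dominated by every $y_j^\ell$. Then $s$ has exactly $k$ out-neighbours and $t$ has exactly $k$ in-neighbours, and $s$ and $t$ are non-adjacent.

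\emph{Goal.} It suffices to find $k$ internally disjoint $(s,t)$-paths in $D'$. Each such path uses a distinct $x_i^j$ and a distinct $y_j^\ell$. Deleting $s$, $t$ and these gadget vertices leaves $k$ internally disjoint paths in $D$. Exactly $a_i$ of them start at $x_i$ and exactly $b_j$ end at $y_j$, because every gadget vertex is used exactly once. Since the $x_i$ and $y_j$ are distinct, the paths stay internally disjoint after trimming. Note that a trimmed path may pass through some other $x_{i'}$ or $y_{j'}$ internally. This is not allowed, since the $x_{i'}$ are endpoints of other paths and internal disjointness forbids an endpoint of one path being an interior vertex of another. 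I handle this in the separator argument below.

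\emph{Separator bound.} By the vertex version of Menger's theorem, it is enough to show that every set $Z\subseteq V(D')\setminus\{s,t\}$ meeting all $(s,t)$-paths has $|Z|\ge k$. Suppose $|Z|\le k-1$.
\begin{itemize}
\item Since $s$ has $k$ out-neighbours, some $x_i^j\notin Z$. If $x_i\notin Z$ as well, then $s$ reaches $x_i$. Otherwise, $Z$ contains $x_i$ together with some gadget vertices of other indices; counting, $Z$ misses at least one gadget vertex $x_{i'}^{j'}$ with $x_{i'}\notin Z$, unless $Z$ contains every $x_i$ whose gadget is not fully inside $Z$. Every such configuration costs at least $\sum_i\min(a_i,1+\cdots)\ge$ one element per unit of $a_i$, hence at least $k$ elements.
\item Precisely: to block all routes from $s$ into $D$, $Z$ must contain, for each $i$, either $x_i$ or all $a_i$ of the $x_i^j$. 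This costs at least $\sum_i 1\cdot[\,\cdot\,]$, and the total is at least $k$ only if we charge $a_i\ge1$ for each $i$.
\item So let $Z_D=Z\cap V(D)$. Then $s$ reaches some vertex $u\in X\setminus Z_D$, and symmetrically some $w\in Y\setminus Z_D$ reaches $t$.
\item Since $|Z_D|\le k-1$ and $D$ is $k$-strong, $D-Z_D$ is strong, so it contains a $(u,w)$-path. This gives an $(s,t)$-path avoiding $Z$, a contradiction.
\end{itemize}

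The main obstacle is the bookkeeping on endpoints: ensuring that a path does not pass internally through another prescribed terminal. Here is how I would handle it. Among all families of $k$ internally disjoint $(s,t)$-paths in $D'$, choose one minimising total length. If a path $P$ starting at $x_i$ passes internally through $x_{i'}$, then some other path $Q$ starts at $x_{i'}$. Shortcutting $P$ at $x_{i'}$ would double-use $x_{i'}$, so instead I would swap the initial segments of $P$ and $Q$ and argue that this exchange strictly shortens the family. If this exchange argument becomes messy, the fallback is to modify the construction: split each $x_i$ into an in-copy and an out-copy, and let only the out-copy carry the gadget arcs. This forbids re-entry, and the same separator argument goes through, now using that $D$ is $k$-strong to route around the at most $k-1$ removed vertices.
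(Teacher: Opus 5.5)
\textbf{Verdict: there is a genuine gap.} Your auxiliary digraph fails as soon as some $a_i\ge 2$ (or some $b_j\ge 2$), which is exactly the case the statement is for; the paper applies it, e.g., with a single source of multiplicity $2k+2$. Each gadget vertex $x_i^j$ has $x_i$ as its \emph{only} out-neighbour, so every $(s,t)$-path through $x_i^j$ contains $x_i$ as an internal vertex. Internally disjoint $(s,t)$-paths in $D'$ therefore use at most one gadget vertex per $x_i$. Hence $Z=\{x_1,\dots,x_r\}$ is an $(s,t)$-separator with only $r<k$ vertices. Your second bullet in fact observes that $Z$ may take $x_i$ instead of all the $x_i^j$. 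But that costs $1$, not $a_i$, so the claimed bound $|Z|\ge k$ is false, and no bookkeeping will rescue it. The in-copy/out-copy fallback has the same defect: the out-copy of $x_i$ is still a single vertex through which all $a_i$ gadget paths must pass.

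The repair is to \emph{replace} each terminal rather than feed it.
\begin{enumerate}
\item Substitute for $x_i$ an independent set of $a_i$ copies having the same in- and out-neighbours as $x_i$, and for $y_j$ an independent set of $b_j$ copies. Let $s$ dominate the $k$ copies of terminals in $X$, and let the $k$ copies of terminals in $Y$ dominate $t$.
\item Given a separator $Z$ with $|Z|\le k-1$, let $Z^*$ be the non-copy vertices of $Z$ together with every terminal all of whose copies lie in $Z$. Then $|Z^*|\le |Z|\le k-1$, and some $x_i$ and some $y_j$ lie outside $Z^*$.
\item The strong digraph $D-Z^*$ has an $(x_i,y_j)$-path. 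Its segment from its last vertex in $X$ to the next vertex in $Y$ has no interior vertex in $X\cup Y$. Replacing its two ends by copies outside $Z$ gives an $(s,t)$-path in $D'-Z$, a contradiction.
\item Menger's theorem then provides $k$ internally disjoint $(s,t)$-paths. Since $s$ has exactly $k$ out-neighbours and $t$ exactly $k$ in-neighbours, each copy is the second (respectively penultimate) vertex of exactly one path and lies on no other.
\item Identifying copies with their originals yields the required family. No terminal occurs as an interior vertex, which settles your endpoint worry without any exchange argument.
\end{enumerate}
The paper does not prove this statement; it simply cites it.
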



The following theorem established by Volkmann facilitates our proof of the case $k=1$ in Theorem \ref{extended}.
\begin{theorem}\label{SPD}\cite{62}
    Every strong semicomplete $c$-partite digraph with $c \geq 3$ contains a spanning strong oriented subdigraph.
\end{theorem}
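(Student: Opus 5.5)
The plan is to start from $D$ and delete arcs one at a time. At each step we remove one arc of some remaining $2$-cycle, and we must always be able to do this while keeping the digraph strong. The key invariant is that every intermediate digraph $D'$ is a spanning strong subdigraph of $D$ obtained by deleting only arcs whose reverse arcs remain. Because of this, any two vertices in different partite sets of $D$ stay adjacent in $D'$. When no $2$-cycle is left, $D'$ is the desired spanning strong oriented subdigraph. So it suffices to show that a strong $D'$ with this property that still has a $2$-cycle $xyx$ allows the deletion of $xy$ or of $yx$ with $D'$ remaining strong.

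Suppose not. Then $D'-xy$ is not strong, so there is a set $X$ with $x\in X$ and $y\notin X$ such that $xy$ is the only arc of $D'$ leaving $X$. Similarly there is a set $Y$ with $y\in Y$ and $x\notin Y$ such that $yx$ is the only arc of $D'$ leaving $Y$. Next compare the four sets $X\cap Y$, $X\setminus Y$, $Y\setminus X$ and $R=V\setminus(X\cup Y)$.
\begin{itemize}
\item Neither $x$ nor $y$ lies in $X\cap Y$. Hence an arc leaving $X\cap Y$ must stay inside both $X$ and $Y$. Strong connectivity of $D'$ then forces $X\cap Y=\emptyset$.
\item The only arcs leaving $X$ or $Y$ end at $y\in Y$ or $x\in X$. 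Hence no arc enters $R$ from outside $R$, which forces $R=\emptyset$.
\end{itemize}
Therefore $V=X\cup Y$ is a partition. Moreover, the only arcs of $D'$ between the two sides are $xy$ and $yx$.

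Now use adjacency. Every pair $u\in X$, $v\in Y$ with $\{u,v\}\neq\{x,y\}$ is non-adjacent in $D'$, so $u$ and $v$ lie in the same partite set.
\begin{itemize}
\item Suppose $|X|\ge 2$ and $|Y|\ge 2$. Pick $u\in X\setminus\{x\}$ and $v\in Y\setminus\{y\}$. Then $x$ and $v$ share a partite set, $u$ and $v$ do, and $u$ and $y$ do. Hence $x$ and $y$ share a partite set, contradicting that they are adjacent.
\item Otherwise, by symmetry take $X=\{x\}$. Then every vertex of $Y\setminus\{y\}$ lies in the partite set of $x$. So $D$ has only the two partite sets containing $x$ and $y$, contradicting $c\ge 3$.
\end{itemize}

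The step I expect to need the most care is the reduction to the partition $V=X\cup Y$. In particular, one must check that the emptiness of $X\cap Y$ and of $R$ really follows from strong connectivity of $D'$, including degenerate situations such as $X\cup Y$ being all of $V$ from the start. The final partite-set counting argument is then short.
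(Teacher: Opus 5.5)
Your proof is correct and complete. The paper gives no proof of this statement: it quotes it from Volkmann \cite{62} and uses it only as the base case $k=1$ in the proof of Theorem \ref{extended}. So there is no in-paper argument to compare against, and yours is a valid self-contained replacement.

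What you have written is essentially the Boesch--Tindell bridge argument (\cite{boesch1980}, which the paper cites for the case $k=1$ of Theorem \ref{thm:mixedkarcstrong}), specialised to semicomplete multipartite digraphs. In a strong digraph, one arc of a $2$-cycle $xyx$ can be dropped unless $\{x,y\}$ is the only adjacency across some bipartition $(X,Y)$ of the vertex set.

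Your crossing-sets step is sound and has no hidden degenerate cases. No arc leaves $X\cap Y$ and $X\cap Y\neq V$, so $X\cap Y=\emptyset$. No arc enters $R$ and $R\neq V$, so $R=\emptyset$. The case $R=\emptyset$ from the start is not special.

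This finer argument is genuinely needed here. Theorem \ref{thm:mixedkarcstrong} as stated requires $2$-arc-strong connectivity, which a merely strong semicomplete multipartite digraph need not have; a directed triangle is an example.

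Your final case analysis can be shortened. Since $c\ge 3$, pick $w$ in a partite set containing neither $x$ nor $y$. Then $w$ is adjacent in $D'$ to both $x$ and $y$, so whichever of $X$, $Y$ contains $w$ gives a second adjacency across the cut.

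You implicitly use that all $c$ partite sets are nonempty. This is the standard convention, and the statement is false without it.
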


The following lemma, which was also used in \cite{bangJGT14} to prove that every $5k$-strong semicomplete digraph has a spanning $k$-strong tournament, plays an essential role in our proofs.

\begin{lemma}\label{keylemma}\cite{book}
Let \( k \) be positive integer, and let \( D \) be a \( (k-1) \)-strong digraph. Suppose that \( xy \) is a single arc of $D$ and let 
$D' = D_{\stackrel{\leftarrow}{xy}}$. If there exist at least \( k \) internally disjoint \( (x,y) \)-paths in \( D' \), then \( D' \) is \( (k-1) \)-strong. Furthermore, if \( D' \) is not \( k \)-strong, then every minimum separating set \( S' \) of \( D' \) also separates \( D \).
\end{lemma}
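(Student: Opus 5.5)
The plan is a direct argument on separators, built around one observation. Suppose $S$ is a vertex set with $x,y\notin S$ such that $D'\setminus S$ contains an $(x,y)$-path. If $D\setminus S$ is strong, then $D'\setminus S$ is strong as well. The reason is that $D'\setminus S$ contains every arc of $D\setminus S$ except $xy$. Any path of $D\setminus S$ that uses $xy$ can therefore be rerouted: replace the arc $xy$ by an $(x,y)$-path of $D'\setminus S$ and shortcut the resulting walk to a path. So every ordered pair of vertices remains mutually reachable in $D'\setminus S$.

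Two preliminary remarks make this observation usable.

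\begin{itemize}
\item None of the $k$ internally disjoint $(x,y)$-paths $P_1,\dots,P_k$ in $D'$ is the single arc $xy$, because $xy\notin A(D')$. Hence each $P_i$ has an internal vertex, and these internal vertices are pairwise distinct. In particular $|V(D')|\geq k+2$, so $D'$ has enough vertices for the $(k-1)$-strong and $k$-strong definitions to be the relevant ones.
\item If a set $S$ with $x,y\notin S$ satisfies $|S|\leq k-1$, then $S$ meets at most $k-1$ of the internal vertex sets of the $P_i$. So at least one $P_i$ survives in $D'\setminus S$, and $D'\setminus S$ has an $(x,y)$-path.
\end{itemize}

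For the first assertion, suppose $D'\setminus S$ is not strong for some $S$ with $|S|\leq k-2$.

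\begin{itemize}
\item If $x\in S$ or $y\in S$, then $D'\setminus S=D\setminus S$, which is strong since $D$ is $(k-1)$-strong. This is a contradiction.
\item Otherwise $x,y\notin S$. By the second remark an $(x,y)$-path survives in $D'\setminus S$, and $D\setminus S$ is strong. The rerouting observation then makes $D'\setminus S$ strong, again a contradiction.
\end{itemize}

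Hence $D'$ is $(k-1)$-strong.

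For the second assertion, assume $D'$ is not $k$-strong. By the first part and $|V(D')|\geq k+2$, a minimum separating set $S'$ of $D'$ has exactly $k-1$ vertices. We must show that $D\setminus S'$ is not strong.

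\begin{itemize}
\item If $x\in S'$ or $y\in S'$, then $D\setminus S'=D'\setminus S'$, which is not strong.
\item Otherwise $x,y\notin S'$. Since $|S'|=k-1<k$, the second remark gives an $(x,y)$-path in $D'\setminus S'$. Now if $D\setminus S'$ were strong, the rerouting observation would make $D'\setminus S'$ strong, contradicting the choice of $S'$.
\end{itemize}

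Thus $S'$ separates $D$. The only delicate point I expect is the bookkeeping just described: using $xy\notin A(D')$ to guarantee that every path has an internal vertex, and handling separately the case where $x$ or $y$ lies in the separator. Apart from that, the argument is a direct application of the path-survival count and the rerouting observation.
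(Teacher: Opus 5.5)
Your proof is correct. The paper does not prove this lemma itself but cites it from Bang-Jensen and Gutin's book, and your argument is essentially the standard one. A separator containing $x$ or $y$ sees no difference between $D$ and $D'$. A separator of size at most $k-1$ avoiding $x,y$ leaves one of the $k$ internally disjoint $(x,y)$-paths of $D'$ intact, so the lost arc $xy$ can be simulated.
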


\section{Proof of Theorem \ref{extended}}

We repeat the statement of the theorem here.

\noindent{}{\bf Theorem \ref{extended}} Let $k$ be a positive integer. Every $(4k+1)$-strong extended semicomplete digraph contains a spanning $k$-strong oriented subdigraph.

{Before presenting the detailed proof, a brief outline is provided below. This theorem is proved by induction on the parameter $k$. Arc orientations are assigned following designated rules (see rules (a)--(c)) to construct a spanning semicomplete multipartite subdigraph $D'$ of $D$, and the strong connectivity of $D'$ (see Claim \ref{claim1}) is verified to establish the base case of induction. For the inductive step on $k$, the proof proceeds by contradiction. An optimal spanning $(k-1)$-strong oriented subdigraph $T'$ in $D'$ is selected, with $S$ denoting its minimum separating set. The vertex set $V(T')$ is partitioned into three disjoint subsets $S$, $U$, and $W$, where no arcs directed from $U$ to $W$ in $T'$. Combined with the above structural and strong connectivity, both $U\setminus S'$ and $W\setminus S'$ are verified to be nonempty (see Claim \ref{non-empty}), where $S' \subset S$. The subsequent argument is divided into two cases based on the distribution features of some partite sets. In each case, a vertex pair $(x,y)$ with $k+1$ internally disjoint $(x,y)$-paths in $T'$ exists, where $x$ and $y$ form a $2$-cycle in $D'$ (see Claim \ref{case2.1} and Case 2). However, such vertex pairs are excluded by the optimality of $T'$ (see Claim \ref{contra}). The resulting contradiction completes the entire inductive proof.}

\begin{proof}{\textbf{of Theorem \ref{extended}.}}    
Let $D = H[S_1, S_2, \dots, S_h]$ be an $(4k+1)$-strong extended semicomplete digraph. Suppose first that $h=2$. By definition of extended semicomplete digraphs, every vertex in $S_1$ forms a 2-cycle with every vertex of $S_2$. As $D$ is $(4k+1)$-strong we have $|S_i|\geq 4k+1$ for $i\in [2]$. Let $S_i^1$ consist of and arbitrary subset of $S_i$ of size $2k$ for $i\in [2]$ and let $S_i^2=S_i\setminus S_i^1$. Then $D$ contains the arcs
$S_1^1\to S_2^1\to S_1^2\to S_2^2\to S_1^1$ and these arcs induce a spanning $2k$-strong oriented subdigraph of $D$.

In what follows we assume $h\geq 3$. Now our aim is to construct a spanning semicomplete $h$-partite subdigraph $D'\subseteq D$. Before that, we introduce necessary notations and partitions. A partition set $S_i$ is called a \textbf{large set} if $|S_i|\geq 2$, and a \textbf{singleton set} if $|S_i|=1$. For each $S_i$, partition $S_i$ into three disjoint subsets $S_i^1,S_i^2,S_i^3$ satisfying
\[
\begin{cases}
|S_i^1| = |S_i^2| = \dfrac{|S_i|}{2},\ S_i^3=\emptyset, & |S_i|\text{ even};\\[4pt]
|S_i^1| = |S_i^2| = \left\lfloor \dfrac{|S_i|}{2} \right\rfloor,\ |S_i^3|=1, & |S_i|\text{ odd}.
\end{cases}
\]
In particular, if $|S_i|=1$, then $S_i^1=S_i^2=\emptyset$ and $S_i^3=S_i$.
For each part $S_i$, define the \textbf{external vertex set}
\[
{EX}(S_i)=\bigcup_{t\in [h]\setminus \{i\}}S_t^3.
\]
For each large set $S_i$, let $C(S_i)\subseteq {EX}(S_i)$ be the set of vertices which form a 2-cycle with every vertex in $S_i$ in $D$.

We now construct a spanning semicomplete $h$-partite subdigraph $D'$ of $D=H[S_1,S_2,\dots,S_h]$. The arc set of $D'$ is defined according to the ordering of partition classes in $H[S_1,S_2,\dots,S_h]$, subject to the rules below.
The relevant structural illustrations are presented in Figure \ref{fig1}.
\begin{enumerate}[label=(\alph*)]
\item Let $S_i,S_j$ be distinct large sets with $i<j$ and $S_i\leftrightarrow S_j$ in $D$. We orient the arcs between $S_i^1\cup S_i^2$ and $S_j^1\cup S_j^2$ in $D'$ as $S_i^1\mapsto S_j^1$, $S_j^1\mapsto S_i^2$, $S_i^2\mapsto S_j^2$, $S_j^2\mapsto S_i^1$.
\item For any large set $S_i$:
    \begin{itemize}
    \item if $|C(S_i)|=1$, then $S_i\leftrightarrow v_1$ holds in $D'$;
    \item if $|C(S_i)|\geq 2$, we keep $S_i^3\leftrightarrow C(S_i)$ in $D'$. Partition $C(S_i)$ into two subsets of sizes $\lfloor |C(S_i)|/2\rfloor$ and $\lceil |C(S_i)|/2\rceil$. For each vertex $v$ in the former subset, set $S_i^1\mapsto v\mapsto S_i^2$; for each vertex $v$ in the latter subset, set $S_i^2\mapsto v\mapsto S_i^1$.
    \end{itemize}
\item All $2$-cycles connecting two singleton sets in $D$ are preserved in $D'$. All single arcs between distinct partition classes in $D$ are fully retained in $D'$.
\end{enumerate}

The above construction provides $D'$ with several crucial structural properties, which are summarized as follows. 
\begin{itemize}
    \item[(P1)] All $2$-cycles in $D\left[\bigcup_{t}S_t^3\right]$ and all single arcs belonging to $A(D)$ are preserved in $D'$.
    \item[(P2)] For each large set $S_j$, define $\mathcal{I}_j\subseteq \{1,\ldots,h\}$ to be the index set of all large sets that form $2$-cycles with $S_j$ in $D$. Then for any vertex $v\in S_j^1\cup S_j^2$ and any subset $I\subseteq \mathcal{I}_j$, the number of single out-arcs of $v$ in $D'$ is at least
    \[
    \frac{1}{2}d^{+}_{D\left[\left(\bigcup_{t\in \mathcal{I}_j}V(S_t)\right)\cup C(S_j)\right]}(v)-1.
    \](This lower bound follows from the bipartite orientation rule between large sets and external vertices, which preserves at least half of the original out-degree up to an error of at most one vertex.)
    \item[(P3)] If a large set $S_i$ contains at least one vertex $s_i$ lying on a $2$-cycle in $D'$, then either $S_i^3\neq\emptyset$ or $|C(S_i)|=1$.
\end{itemize}

\begin{figure}[htbp]
    \centering
\includegraphics[width=0.75\textwidth]{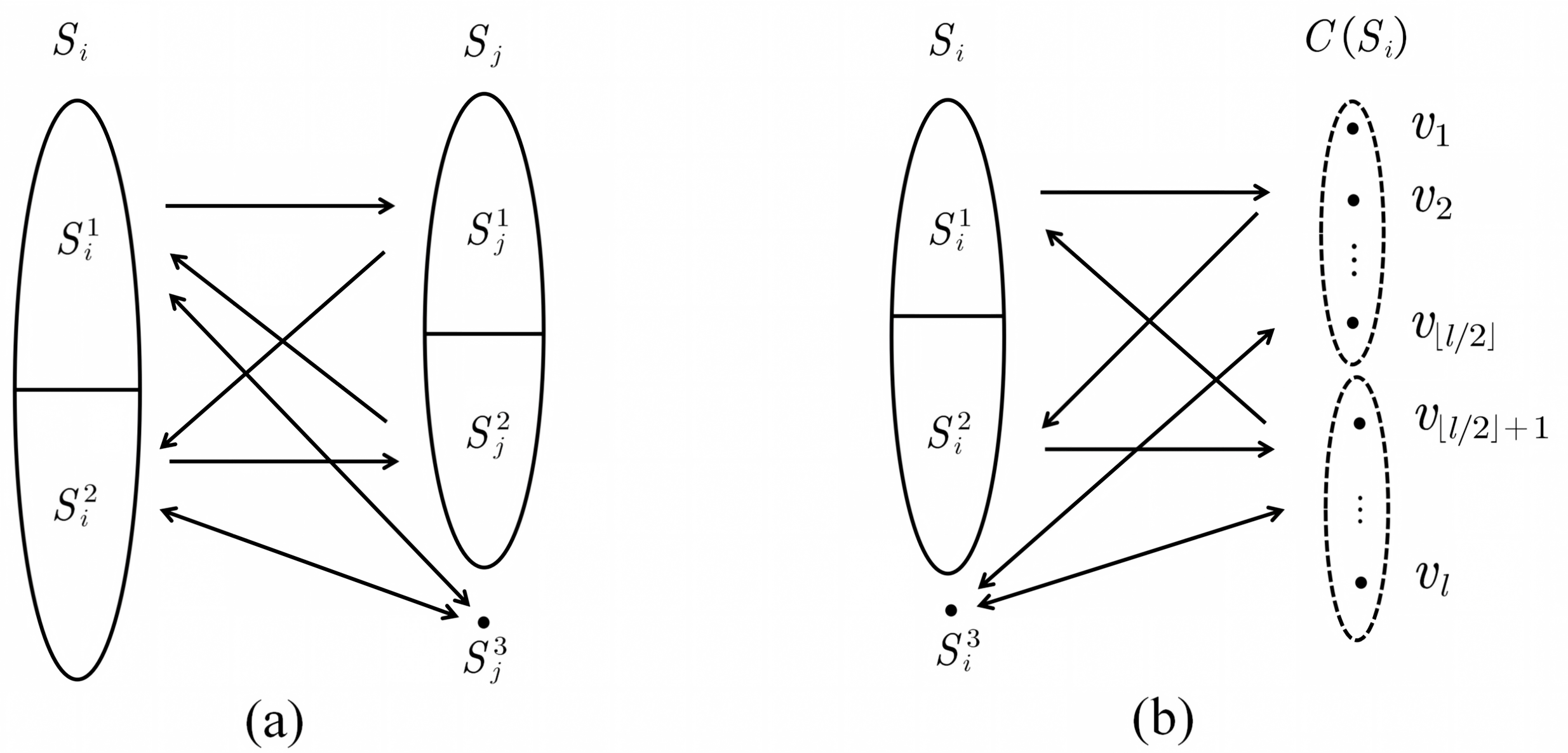}
    \caption{Construction of $D'$. (a) Arcs between two large sets. (b) Arcs between a large set and external vertices. A bidirectional arrow between $A$ and $B$ denotes the possible presence of arcs from $A$ to $B$ and from $B$ to $A$.}
    \label{fig1}
\end{figure}

 To prove by induction that $D'$ contains a spanning $k$-strong oriented subdigraph, we firstly prove that $D'$ is strong.
\begin{claim}\label{claim1}
$D'$ is strong.
\end{claim}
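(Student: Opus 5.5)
The plan is to argue by contradiction via a cut. Suppose $D'$ is not strong. Then there is a partition $V(D)=X\cup Y$ with $X,Y\neq\emptyset$ such that $D'$ has no arc from $Y$ to $X$. Since $D$ is $(4k+1)$-strong, it is in particular strong, so $D$ has an arc $yx$ with $y\in Y$ and $x\in X$. By (P1) every single arc of $D$ is kept in $D'$, so $yx$ must lie on a $2$-cycle $xyx$ of $D$, and $D'$ keeps only $xy$. The whole proof therefore reduces to one statement: \emph{for every $2$-cycle $xyx$ of $D$, the vertices $x$ and $y$ lie in a common strong subdigraph of $D'$.} Given this, $x$ and $y$ are on the same side of every cut of $D'$ that has no backward arcs, which contradicts $x\in X$ and $y\in Y$.

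To prove the key statement, I first use that $D$ is an extension of a semicomplete digraph, so adjacency between $S_i$ and $S_j$ is uniform: a vertex $v\in S_t^3$ forms a $2$-cycle with one vertex of a large set $S_i$ if and only if it does so with all of $S_i$. Hence $C(S_i)$ is exactly the set of vertices of ${EX}(S_i)$ lying on $2$-cycles with $S_i$. I then classify a $2$-cycle $xyx$ of $D$ with $x\in S_i$, $y\in S_j$, $i\neq j$, into three cases.
\begin{enumerate}[label=(\roman*)]
\item Both $x,y\in\bigcup_t S_t^3$. This covers pairs of singleton sets and pairs of the form $S_i^3,S_j^3$. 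The $2$-cycle is preserved in $D'$ by (P1), so there is nothing to prove.
\item $x\in S_i^1\cup S_i^2$ and $y\in S_j^1\cup S_j^2$, with both sets large. All four of $S_i^1,S_i^2,S_j^1,S_j^2$ are nonempty, since large sets have size at least $2$. Rule (a) gives the blown-up $4$-cycle $S_i^1\mapsto S_j^1\mapsto S_i^2\mapsto S_j^2\mapsto S_i^1$, and every vertex of a part has an in-neighbour in the preceding part and an out-neighbour in the following part. So the union of these four parts induces a strong subdigraph of $D'$ containing $x$ and $y$.
\item $S_i$ is large, $x\in S_i^1\cup S_i^2$, and $y\in S_j^3$. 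Then $y\in C(S_i)$. If $|C(S_i)|=1$, rule (b) keeps $S_i\leftrightarrow y$, so the $2$-cycle survives. If $|C(S_i)|\geq 2$, both halves of $C(S_i)$ are nonempty, and rule (b) gives $S_i^1\mapsto v\mapsto S_i^2$ for $v$ in the first half and $S_i^2\mapsto v'\mapsto S_i^1$ for $v'$ in the second half. Thus $S_i^1\cup S_i^2\cup C(S_i)$ induces a strong subdigraph of $D'$: every vertex lies on a closed walk $S_i^1\to v\to S_i^2\to v'\to S_i^1$, and all these walks share the parts $S_i^1,S_i^2$. This subdigraph contains $x$ and $y$.
\end{enumerate}
A $2$-cycle between $S_i^3$ and $S_j^1\cup S_j^2$ is case (iii) with the roles of $i$ and $j$ exchanged, since $S_i^3\subseteq {EX}(S_j)$. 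This exhausts all $2$-cycles of $D$.

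The main obstacle is the bookkeeping in this case analysis, namely checking that the rules (a)--(c) really cover every $2$-cycle of $D$ and that the sets used as ``strong pieces'' are nonempty. The two points to watch are that $|S_i|\geq 2$ forces $S_i^1,S_i^2\neq\emptyset$, and that when $|C(S_i)|\geq 2$ both halves of $C(S_i)$ are nonempty. I expect no need to invoke Theorem~\ref{SPD} here; the uniform adjacency of extensions together with the explicit rules should suffice.
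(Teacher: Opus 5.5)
Your proposal is correct and follows essentially the paper's argument: both show that every arc of a $2$-cycle of $D$ that is dropped in $D'$ can be bypassed inside $D'$, using the blown-up $4$-cycle from rule (a) and the alternating structure on $S_i^1\cup S_i^2\cup C(S_i)$ from rule (b). The paper does this by patching a shortest path of $D$ arc by arc and you do it via a cut; your case split by membership in $\bigcup_t S_t^3$, rather than by large versus singleton classes, is a slightly cleaner organization that treats the $S_i^3$-vertices of large sets explicitly.
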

\begin{proof}
Suppose, to the contrary, that $D'$ is not strong. Then there exist two vertices $u,v \in V(D')$ such that there is no $(u,v)$-path in $D'$. Let $P := p_1 p_2 \cdots p_{l-1} p_l$ be a shortest $(u,v)$-path in $D$, such path exists since $D$ is strong. Assume that  $p_i \in S_{j_i}$ for each $i\in [l]$). If $p_i p_{i+1} \in A(D')$ for all $i\in [l-1]$, then $P$ is a $(u,v)$-path in $D'$, a contradiction. Thus there exists a smallest index $i$ such that $p_i p_{i+1} \notin A(D')$. By the construction of $D'$ ({maintaing all single arcs of $D$ in $D'$}), this can happen only if $S_{j_i}\leftrightarrow S_{j_{i+1}}$ in $D$. And it is impossible that both $S_{j_i}$ and $S_{j_{i+1}}$ are singleton sets by (c). Therefore, we distinguish cases according to the types of $S_{j_i}$ and $S_{j_{i+1}}$.

 \textbf{Case 1: Both $S_{j_i}$ and $S_{j_{i+1}}$ are large sets.} From rule (a), $D'$ contains an extended 4-cycle,  either $S_{j_i}^1\mapsto  S_{j_{i+1}}^1\mapsto  S_{j_i}^2\mapsto  S_{j_{i+1}}^2\mapsto   S_{j_i}^1$ or  $S_{j_{i+1}}^1\mapsto   S^1_{j_i}\mapsto  S_{j_{i+1}}^2\mapsto   S^2_{j_{i}}\mapsto   S^1_{j_{i+1}}$. It follows from the rule (b) that $S^3_{j_i}\leftrightarrow S^3_{j_{i+1}}$, $S^3_{j_i}$ has an out-neighbour in 
    $S^1_{j_{i+1}}\cup S^2_{j_{i+1}}$, and $S^3_{j_{i+1}}$ has an in-neighbour in $S^1_{j_{i}}\cup S^2_{j_{i}} $ (if  the sets $S^3_{j_i},S^3_{j_{i+1}}$ are not empty) in $D'$. Hence, one can easily find a $(p_i,p_{i+1})$-path inside $D'[S_{j_i} \cup S_{j_{i+1}}]$.

\textbf{Case 2: Exactly one of $S_{j_i}$ and $S_{j_{i+1}}$ is a large set.} 
Without loss of generality, let $S_{j_{i+1}}$ be a large set and $S_{j_i}=\{p_i\}$ be a singleton set, which implies $p_i\in EX(S_{j_{i+1}})$. Since $p_ip_{i+1}\in A(D)\setminus A(D')$, rule (b) ensures $p_{i+1}p_i\in A(D')$ and $p_{i+1}\notin S^3_{j_{i+1}}$ (assume $p_{i+1}\in S^2_{j_{i+1}}$). This means $|C(S_{j_{i+1}})|\geq 2$. Further by (b), there exist $v_1\in C(S_{j_{i+1}})\setminus \{p_i\}$ such that $v_1\mapsto  S^2_{j_{i+1}}\mapsto p_i\mapsto  S^1_{j_{i+1}}\mapsto v_1$ in $D'$. From this it is easy to see that $D'$ contains a $(p_i,p_{i+1})$-path. The symmetric case can be verified in the same manner.

There always exists a  $(p_i,p_{i+1})$-path in $D'$. As there exists a $(u,p_i)$-path in $D'$, combining these two paths produces a $(u,p_{i+1})$-path within $D'$. Iterating this procedure over all successive vertices of $P$ finally yields a $(u,v)$-path in $D'$, which leads to a contradiction.
\end{proof}

From now on, we prove that $D'$ contains a $k$-strong spanning oriented subdigraph $T$. {The proof below adopts similar ideas from \cite{bangJGT14} and \cite{guoDAM79}, but it involves far more sophisticated techniques owing to the structure of extended semicomplete digraphs.} 
Induction on $k$. The base case $k=1$ holds directly by Theorem \ref{SPD}, since $D'$ is strong and $h\geq 3$. Suppose $k \ge 2$ and the assertion is valid for $k-1$. We assume for contradiction that $D'$ contains no spanning $k$-strong oriented subdigraph. By the induction hypothesis, $D'$ has at least one spanning $(k-1)$-strong oriented subdigraph. We select such a subdigraph $T'$ satisfying the following lexicographic optimal conditions:
\begin{enumerate}[label=(\roman*), leftmargin=*]
    \item[(i)]  $A(T')\subseteq  A(D')$, and any pair of adjacent vertices in $D'$ are also adjacent in $T'$; 
    \item[(ii)] The number of minimum separating sets of $T'$ is as small as possible;
    \item[(iii)] Among all such subdigraphs, the number of strong components of $T'-S$ is minimized, where $S$ denotes an arbitrary minimum separating set of $T'$.
\end{enumerate}

Let $T'_1, T'_2, \dots, T'_p$ be the strong components of $T' - S$ arranged in acyclic topological order. We may assume $|V(T'_1)|\ge |V(T'_p)|$; otherwise we consider the converse digraph of $D'$ by reversing all arc directions. Define $W := \bigcup_{j=1}^{p-1} V(T'_j)$ and $U := V(T'_p)$.

We classify the large sets of {$D$} as follows. A large set $S_i$ is defined to be \textbf{bad} if $S_i^1 \subseteq S $ or $ S_i^2 \subseteq S $. All other large sets are defined to be \textbf{good}, which satisfy $S_i^1 \setminus S \neq \emptyset$ and $S_i^2 \setminus S \neq \emptyset$. Also, let $I_B := \{i\in [h] \mid S_i  \text{ is a bad large set}\}$ be the index set of bad large sets. Define the vertex set 
\begin{equation}\label{S'def}
S' := \bigcup_{i \in I_B} \left( (S_i^1 \cup S_i^2) \setminus S \right)
\end{equation}
and we have the cardinality relation
\begin{equation}\label{s'}
    | \bigcup_{i \in I_B}  (S_i^1 \cup S_i^2) |= |S\cap \bigcup_{i \in I_B}  (S_i^1 \cup S_i^2)|+|S'|\leq 2|S\cap \bigcup_{i \in I_B}  (S_i^1 \cup S_i^2)|.
\end{equation}
Since $|S_i^1| = |S_i^2|$ holds for every large set $S_i$, the contribution of each bad set to $S$ is at least its contribution to $S'$. Consequently, 
\begin{equation}\label{eq1}
    |S'| \le \sum_{i \in I_B} |S_i \cap S| \le |S| = k-1.
\end{equation} 
Combined with $|V(D)| \geq 4k+2>|S|+|S'|$, we deduce that at least one of $|U \setminus S'|\neq 0$ and $|W \setminus S'|\neq 0$ holds. Under the assumption $|W|\geq |U|$, if $W\setminus S'=\emptyset$, then
\[
|V|= |U \cup W \cup S| \leq 2|W| + |S| \leq 2(k-1) + (k-1) = 3k - 3,
\]
contradicting that \( D \) is \( (4k+1) \)-strong.
Hence \( W \setminus S' \neq \emptyset\).

\begin{claim}\label{non-empty}
  {\( U \setminus S'\neq\emptyset \)}
\end{claim}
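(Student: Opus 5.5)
The plan is to argue by contradiction. Suppose $U\setminus S'=\emptyset$, so $U\subseteq S'$. We then find a vertex $u\in U$ whose out-degree in $T'$ is too large to fit inside $U\cup S$.

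\textbf{Step 1 (what $u$ looks like).} Since $U=V(T'_p)$ is nonempty, pick $u\in U$. By \eqref{S'def}, $u\in (S_i^1\cup S_i^2)\setminus S$ for some bad large set $S_i$. By \eqref{eq1}, $|U|\le |S'|\le k-1$, and $|S|=k-1$.

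\textbf{Step 2 (where the out-neighbours of $u$ can lie).} $T'_p$ is the terminal strong component of $T'-S$, so there is no arc of $T'$ from $U$ to $W$. Hence
\[
N^+_{T'}(u)\subseteq (S\cup U)\setminus S_i .
\]
The vertices of $S_i$ are excluded because $u$ is not adjacent to its own partite set. Therefore $d^+_{T'}(u)\le |S|+|U|-1\le 2k-3$.

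\textbf{Step 3 (lower bound on $d^+_{T'}(u)$).} The vertex $u$ lies in $S_i^1\cup S_i^2$, so its arcs in $D'$ are of three kinds.
\begin{itemize}
\item Single arcs of $D$. These are kept by rule (c)/(P1).
\item Arcs produced by rule (a) toward large sets $S_t\leftrightarrow S_i$. These are single arcs of $D'$.
\item Arcs produced by rule (b) toward $C(S_i)$. These are single arcs of $D'$, except in the case $|C(S_i)|=1$, where $u$ may lie on a single $2$-cycle with $v_1$.
\end{itemize}
A single arc of $D'$ cannot be removed or reversed when passing to the orientation $T'$, because of condition (i). So every single out-arc of $u$ in $D'$ is an out-arc of $u$ in $T'$.

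Next count these arcs. Single out-arcs of $u$ in $D$ are all kept. The out-neighbours of $u$ inside $\bigl(\bigcup_{t\in\mathcal I_i}S_t\bigr)\cup C(S_i)$ contribute, by (P2), at least half of their number minus $1$. The sets $S_t^3$ outside $C(S_i)$ meet $u$ only in single arcs, because adjacency in an extended semicomplete digraph is uniform across a partite set. Since $D$ is $(4k+1)$-strong, $d^+_D(u)\ge 4k+1$. This gives
\[
d^+_{T'}(u)\;\ge\; \tfrac12(4k+1)-1 \;>\; 2k-3,
\]
which contradicts Step 2. (Alternatively, one can run the same argument with in-degrees on $W$, but that is not needed here.)

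\textbf{Main obstacle.} The delicate point is making the (P2) count rigorous. I must check that the half-loss occurs only on the $2$-cycle part of $d^+_D(u)$ and that the additive error is a single vertex even when $|C(S_i)|$ is odd or equal to $1$. Concretely, in the $|C(S_i)|=1$ case the one $2$-cycle $u v_1$ may be oriented either way in $T'$, which costs at most $1$. The margin $2k-\tfrac12$ versus $2k-3$ leaves enough room to absorb these constant losses, so I expect the bookkeeping to close.
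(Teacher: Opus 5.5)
Your proof is correct, and it takes a different and more direct route than the paper's.

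\textbf{The paper's route.} It fixes $x\in S_i\cap U$ and splits the out-neighbours of $x$ in $W$ into three sets: $A$ (in $S'$), $B$ (in $C(S_i)$) and $G$ (in good large sets). It bounds $|A|\le |S'|-|U|$. It bounds $|B|\le |S|-|S'|+1$, because the at least $|B|-1$ vertices of $C(S_i)$ that $x$ dominates by single arcs of $D'$ must lie in $S$ outside the bad sets. This gives $d^+_D(x)-|G|\le 2k-2$, so $G\neq\emptyset$. Rule (a) then makes $x$ dominate a whole half $S_j^p$ of a good large set. This forces $S_j^p\setminus S\subseteq U$, which contradicts $U\subseteq S'$.

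\textbf{Your route.} You bypass this structure entirely. The out-neighbours of $u$ in $T'$ are confined to $S\cup U$, which has at most $2k-2$ vertices. Meanwhile the halving in rules (a) and (b) leaves $u$ with at least $\lceil (d^+_D(u)-1)/2\rceil\ge 2k$ single out-arcs in $D'$, and condition (i) forces all of them into $T'$.

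\textbf{The bookkeeping you flagged does close.}
\begin{itemize}
\item For $t\in\mathcal{I}_i$, rule (a) gives exactly $\lfloor |S_t|/2\rfloor$ single out-arcs into $S_t^1\cup S_t^2$.
\item Every vertex of some $S_t^3$ joined to $u$ by a 2-cycle lies in $C(S_i)$.
\item Rule (b) gives at least $\lfloor |C(S_i)|/2\rfloor$ single out-arcs into $C(S_i)$, and none when $|C(S_i)|=1$.
\end{itemize}
So the loss against half the 2-cycle out-degree is at most $\tfrac12$.

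\textbf{What each approach buys.} Your version uses only (P2) and $|U|\le |S'|\le k-1$. It also shows more: any vertex of $U$ lying in some $S_i^1\cup S_i^2$ forces $|U|\ge k+2$. The paper's version is more economical in the degree hypothesis. It needs only $d^+_D(x)\ge 2k-1$, since one 2-cycle neighbour in a good large set already suffices. That economy is not needed under the $(4k+1)$-strong assumption.
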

\begin{proof}  
Assume $U\subseteq S'$ for contradiction. There exists a bad large set $S_i$ such that $S_i\cap U\neq\emptyset$. Fix a vertex $x\in S_i\cap U$, it is obvious that $d_{D[U]}^+(x) \leq |U| - 1$. We divide the out-neighbors of $x$ in $W$ within $D$ into three disjoint parts: $N_D^+(x)\cap W = A\cup B\cup G,$ where $A = N_{{W}}^+(x)\cap S'$, $B = N_{{W}}^+(x)\cap C(S_i)$, and $G$ consists of all remaining out-neighbors belonging to good large sets.

    The relation $U \subseteq S'$ implies $|A| \leq |S'| - |U|$. Let $l=|B|$. Every vertex in $B$ forms a $2$-cycle with $x$ in $D$. Based on the acyclic topological order of components of $T'$, there are no arcs directed from $U$ to $W$ in $T'$, so all edges between $x$ and vertices in $B$ are oriented towards $x$ in $T'$. From construction rule (b), there exist at least $l-1$ vertices in $C(S_i)$ satisfying $x\mapsto v$ in $D'$, hence these arcs are also contained in $T'$. To preserve the acyclic order, these $l - 1$ vertices cannot lie in $W$. Furthermore, they cannot lie in $U$ because \( U \subseteq S' \) and $C(S_i)\cap S'=\emptyset$. So they are all contained in $S$. This yields $l-1+|S'|\leq |S|$, namely $l\leq |S|-|S'|+1$.

We further estimate the out-degree of $x$ in $D$ excluding those in $G$, 
    \begin{align*}
        d_D^+(x) - |G| &\leq |S| + |A| + d_{D[U]}^+(x) + l \\
        &\leq |S| + (|S'| - |U|) + (|U| - 1) + (|S| - |S'| + 1) = 2|S| = 2k-2.
    \end{align*}
Since $D$ is $(4k+1)$-strong, $d_D^+(x)$ is sufficiently large, which forces $G\neq\emptyset$. Accordingly, there exists a good large set $S_j$ satisfying $S_i\leftrightarrow S_j$ in $D$ and $S_j\cap W\neq\emptyset$.  By rule (a), either \( x \mapsto S_j^1 \) or \( x \mapsto S_j^2 \) in \( D' \). Assume \( x \mapsto S_j^1 \), then the acyclic ordering of $T'$ forces \( S_j^1 \setminus S \subseteq U \) in $T'$, which contradicts $U\subseteq S'$. This completes the proof of Claim \ref{non-empty}.
 \end{proof}

Now $|U\setminus S'|\neq 0$ and $|W\setminus S'|\neq 0$. Since \( D \) is \( (4k+1) \)-strong  and $|S\cup S'|\leq 2k-2$, {Menger's theorem guarantees that} there exist at least \( 2k+3 \) pairwise distinct 2-cycles from \( U \setminus S' \) to \( W \setminus S' \) in \( D \setminus (S \cup S') \).
\begin{claim}\label{contra}
There is no pair of vertices  \( y \in U\setminus S' \) and  $x\in W\setminus S'$ such that there are $k+1$ internally disjoint paths from $ x$ to $y$ in $T'$ and $xyx$ is a 2-cycle in $D'$.
\end{claim}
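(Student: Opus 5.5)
Assume for contradiction that such a pair exists: $y\in U\setminus S'$, $x\in W\setminus S'$, $xyx$ a 2-cycle in $D'$, and $k+1$ internally disjoint $(x,y)$-paths in $T'$. Since $T'$ is an oriented spanning subdigraph satisfying (i), exactly one of $xy,yx$ lies in $T'$. There are no arcs from $U$ to $W$ in $T'$, so $xy\in A(T')$ and $yx\notin A(T')$. The plan is to reverse this arc and show that $T'':=T'_{\{\stackrel{\leftarrow}{xy}\}}$ beats $T'$ in the lexicographic order (ii)--(iii). This contradicts the choice of $T'$.

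\textbf{Step 1.} $T''$ is admissible. It is oriented, since it still uses exactly one arc of the 2-cycle $xyx$ of $D'$. It satisfies $A(T'')\subseteq A(D')$, because $yx\in A(D')$, and it keeps the same adjacencies as $T'$, so condition (i) holds. One of the $k+1$ internally disjoint $(x,y)$-paths in $T'$ may be the arc $xy$ itself; discarding it leaves at least $k$ internally disjoint $(x,y)$-paths in $T''$. Each of these has length at least $2$ and avoids the arc $xy$. Since $T'$ is $(k-1)$-strong, Lemma \ref{keylemma} (with its $k$) shows that $T''$ is $(k-1)$-strong. Moreover, if $T''$ is not $k$-strong, every minimum separating set of $T''$ also separates $T'$. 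Hence every minimum separating set of $T''$, which has size $k-1$, is a minimum separating set of $T'$. So the number of minimum separating sets of $T''$ is at most that of $T'$.

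\textbf{Step 2.} Either $T''$ has strictly fewer minimum separating sets, or it has fewer strong components after deleting $S$. Consider the fixed minimum separating set $S$ of $T'$; note $x,y\notin S$. In $T''-S$ the new arc $yx$ goes from $U=V(T'_p)$ to $x\in W$, where $x\in V(T'_j)$ for some $j<p$. Because $T'_1,\dots,T'_p$ are in acyclic order and $T'-S$ is connected through this order, there is a path in $T'-S$ from $x$'s component to $T'_p$. Combined with $yx$, this merges all components from $T'_j$ through $T'_p$ lying on that path into one strong component.

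There are two outcomes. If $S$ no longer separates $T''$, then $T''$ has strictly fewer minimum separating sets, contradicting (ii). Otherwise $S$ is still a minimum separating set of $T''$, and $T''-S$ has strictly fewer strong components than $T'-S$, contradicting (iii). If $T''$ is actually $k$-strong, this contradicts the standing assumption that $D'$ has no spanning $k$-strong oriented subdigraph.

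\textbf{Main obstacle.} The delicate point is Step 2: making sure the component count really drops. This needs a path in $T'-S$ from $x$ to $U$, and I expect it to be the step requiring the most care. The clean way is to note that $T'_p$ is the terminal component and $T'-S$ is not strong. From any vertex of $x$'s component one can reach some terminal strong component, but $T'_p$ need not be the unique sink in a general acyclic ordering.

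To handle this I would instead work in $T'-S$ with $U=T'_p$ and $W$ the union of all other components. Reversing the arc yields a $(y,x)$-arc. If $x$ cannot reach $U$ in $T'-S$, then $S$ together with that structure still separates $T''$, yet the number of strong components is unchanged only if nothing merges. This may force reinterpreting (iii) via a suitable choice of $S$ and ordering. Concretely, I would first choose $S$ and the ordering so that $T'_p$ is reachable from every component; alternatively, I would use the $k+1$ disjoint paths, at most $k-1$ of which meet $S$, to obtain an $(x,y)$-path in $T'-S$ avoiding the arc $xy$. This second option gives exactly the needed path from $x$ to $U$ inside $T''-S$, closing a cycle through $yx$ and merging $x$'s component with $T'_p$, so the count strictly decreases.
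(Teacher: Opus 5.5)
Your proof is correct and is the same as the paper's: reverse $xy$, use Lemma \ref{keylemma} to see that $T'_{\{\stackrel{\leftarrow}{xy}\}}$ is an admissible $(k-1)$-strong competitor whose minimum separating sets are among those of $T'$, and contradict the lexicographic choice of $T'$. You also supply the step the paper leaves implicit, and the version that works is your second option --- since $|S|=k-1$, at least one of the $k+1$ internally disjoint $(x,y)$-paths has length at least $2$ and avoids $S$, so together with $yx$ it merges the component of $x$ with $T'_p$ in $T'_{\{\stackrel{\leftarrow}{xy}\}}-S$, while no old component is split because the removed arc $xy$ joins two different components --- and this should replace the unjustified ``connected through this order'' assertion in Step 2.
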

\begin{proof}
 Suppose such a pair $x,y$ does exist. Then,  applying Lemma \ref{keylemma} to $T',x,y$ to obtain a \((k-1)\)-strong spanning oriented subdigraph  \( T'_{\{\stackrel{\leftarrow}{xy}\}} \) of \( D' \).  Combining this with the fact that \( D' \) does not contain a spanning \( k \)-strong oriented subdigraph, it follows that every \((k-1)\)-separating set of \( T'_{\{\stackrel{\leftarrow}{xy}\}} \) is also a separating set of \( T \). Therefore, either the number of \((k-1)\)-separating sets in \( T'_{\{\stackrel{\leftarrow}{xy}\}} \) is less than that in \( T' \), or \( T'_{\{\stackrel{\leftarrow}{xy}\}} \) has the same number of \((k-1)\)-separating sets as \( T' \) but the number of strong components of 
 \( T'_{\{\stackrel{\leftarrow}{xy}\}} - S \) is less than that of \( T' - S \), which contradicts the choice of \( T' \). The contradiction verifies the conclusion of this claim.
\end{proof}

To find the vertex pair $x,y$ stated in Claim \ref{contra} and further derive a contradiction, the following claim plays an essential role in the subsequent proof.
\begin{claim}\label{cycle-claim}
Let $\mathcal{C}=\{s_it_is_i\mid i\in [2k+2]\}$ be a family of distinct $2$-cycles in $D$ between $U\setminus S'$ and $W\setminus S'$, where all vertices $s_i\in U\setminus S'$ are pairwise distinct and each vertex $t_i\in W\setminus S'$ is contained in at most two such $2$-cycles. Suppose that, for any partite set $S_j$ with $|S_j\cap \{s_i\mid i\in [2k+2]\}|\geq 2$, we have $(S_j^1\cup S_j^2)\setminus S\subseteq U\setminus S'$. Then there exists a vertex $s_i$ such that $d^-_{T'[\{s_i\mid i\in [2k+2]\}]}(s_i)\geq k$.  
\end{claim}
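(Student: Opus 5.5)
The plan is to count arcs in $T'[X]$, where $X=\{s_i\mid i\in[2k+2]\}$, and use pigeonhole on in-degrees. The first step is to observe that by condition (i) in the choice of $T'$, any two vertices of $X$ that are adjacent in $D'$ (equivalently in $D$, since $D'$ is spanning semicomplete $h$-partite) are adjacent in $T'$. Moreover $T'$ is oriented. Hence if we write $X_j:=X\cap S_j$ and $n_j:=|X_j|$, so that $\sum_j n_j=2k+2$, then
\[
|A(T'[X])|=\binom{2k+2}{2}-\sum_{j}\binom{n_j}{2}.
\]
If every vertex of $X$ had in-degree at most $k-1$ in $T'[X]$, then $|A(T'[X])|\le (k-1)(2k+2)=2k^2-2$. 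Since $\binom{2k+2}{2}=2k^2+3k+1$, we obtain a vertex of in-degree at least $k$ as soon as $\sum_j\binom{n_j}{2}\le 3k+2$. This settles the case where the $s_i$ are spread over many partite sets, for instance when all $n_j\le 2$, or more generally when no part carries too many of the $s_i$.

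The remaining case is when some part $S_j$ contains many $s_i$, so that $\sum\binom{n_j}{2}>3k+2$. This forces $n_j\ge 3$ for some $j$, and then roughly $n_j\gtrsim\sqrt{6k}$. Here the hypothesis of the claim is used: $(S_j^1\cup S_j^2)\setminus S\subseteq U\setminus S'$. Since $X_j$ is independent, every arc of $T'[X]$ incident with $X_j$ goes to or from $X\setminus X_j$.

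I would try to show that, for a large part, most vertices of $X\setminus X_j$ send arcs into $X_j$, and then average in-degrees over $X_j$. The orientation rules explain why this should hold.

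\begin{itemize}
\item For a large set $S_l$ with $S_l\leftrightarrow S_j$, rule (a) makes a vertex of $S_l^1\cup S_l^2$ dominate exactly one of $S_j^1,S_j^2$.
\item A vertex $v\in C(S_j)$ dominates one of $S_j^1,S_j^2$ by rule (b).
\item Single arcs are retained by rule (c).
\end{itemize}

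Combined with the global restriction that $T'$ has no arc from $U$ to $W$, and using the partners $t_i\in W$ of the $s_i\in X_j$ (each $t_i$ is used at most twice), I would argue that $X_j$ splits into its parts in $S_j^1$ and $S_j^2$. The aim is to show that each of these parts receives arcs from at least about half of $X\setminus X_j$. I would then sum the in-degrees over $X_j$ together with the in-degrees inside $X\setminus X_j$, where the latter is again nearly a semicomplete multipartite set. Finally I would apply the same averaging as above to the whole of $X$, with the count of missing pairs replaced by this sharper count of arcs into $X_j$.

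The main obstacle I expect is exactly this concentrated case. Pure arc counting fails when one partite set carries most of the $s_i$: for $n_j=2k+2$ there are no arcs at all, so a single part containing all of $X$ has to be excluded or treated separately. For that case I would try first to use the hypothesis $(S_j^1\cup S_j^2)\setminus S\subseteq U\setminus S'$ together with (P2), the degree condition $\delta^+(D)\ge 4k+1$ and $|S|=k-1$. The goal is to show that the $2k+2$ distinct 2-cycles cannot all meet a single part $S_j$ in $U$. Failing that, I would use the freedom in choosing $\mathcal{C}$ in the application to ensure that no part contains more than about $k$ of the $s_i$, after which the counting argument goes through.
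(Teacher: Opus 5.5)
There is a genuine gap. Your counting step is correct, and once every partite set contains at most two of the $s_i$ it already finishes the proof. With all $n_j\le 2$ you get $\sum_j\binom{n_j}{2}\le k+1$, so $T'[X]$ has at least $2k^2+2k$ arcs and some vertex has in-degree at least $k$. This is equivalent to the paper's argument of completing $T'[X]$ to a tournament with at most one auxiliary arc per vertex.

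The gap is the concentrated case $n_j\ge 3$, which you leave as an unexecuted plan: averaging arcs into $X_j$, then appealing to (P2) and degree bounds, and finally re-choosing $\mathcal{C}$. The last fallback is not available, because the claim is asserted for every family $\mathcal{C}$ satisfying its hypotheses. Nor are (P2) and the degree condition the relevant tools.

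The missing idea is that the hypothesis rules out $n_j\ge 3$ outright, by a short structural argument. Suppose $s_1,s_2,s_3\in S_j$.
\begin{enumerate}
\item $S_j$ is large and good. If it were bad, then $(S_j^1\cup S_j^2)\setminus S\subseteq S'$, and the hypothesis forces $(S_j^1\cup S_j^2)\setminus S=\emptyset$. All three $s_i$ would then lie in $S_j^3$, which has at most one vertex. Hence $S_j^1\setminus S$ and $S_j^2\setminus S$ are both nonempty and contained in $U$.
\item Each $t$ lies on at most two cycles of $\mathcal{C}$, so $\{t_1,t_2,t_3\}$ has two distinct elements. Adjacency in an extended semicomplete digraph depends only on the partite sets, so each of them forms a 2-cycle in $D$ with every vertex of $S_j$.
\item For at least one of them, say $t$, the construction gives single arcs. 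If $t\in S_m^1\cup S_m^2$ for a large $S_m$, rule (a) applies. If both lie in $EX(S_j)$, then $|C(S_j)|\ge 2$ and rule (b) applies. Either way $D'$ contains $S_j^1\mapsto t\mapsto S_j^2$ or $S_j^2\mapsto t\mapsto S_j^1$.
\item These are single arcs of $D'$, so by condition (i) they are arcs of $T'$. Hence some vertex of $U$ dominates $t\in W$ in $T'$. This contradicts the fact that $T'$ has no arc from $U$ to $W$.
\end{enumerate}
Thus $n_j\le 2$ for all $j$, and your counting completes the proof. Without this step the proposal does not establish the claim.
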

\begin{proof}
For each $j\in [h]$, let \( l_j \) be the number of vertices from \( \{s_i\}_{i=1}^{2k+2} \) belonging to the same set \( S_j \). We first show that \( l_j \leq 2 \) for all $j\in [h]$. Suppose that \( l_j \geq 3 \) for some $j$, say without loss of generality $s_1,s_2,s_3\in S_j$. By our assumption,  \( (S^1_j\cup S^2_j) \setminus S \subseteq  U \setminus (S'\cup S_i) \). Also, there exist two distinct vertices in $\{t_1,t_2,t_3\}$, say $t_1,t_2$, such that $ S_j\leftrightarrow t_1 $ and $ S_j\leftrightarrow  t_2 $ in $D$. By rules (a)-(b), all arcs between $S_j$ and $t_1$ in $D'$ satisfy either $S_j^2\mapsto  t_1\mapsto S_j^1$ or $S_j^1\mapsto t_1\mapsto  S_j^{2}$. These arcs are single arcs in $D'$, and hence lie in $T'$. Both cases produce an arc from \( S_j \subseteq U \) to \( t_1 \in W \) in $T'$, contradicting the acyclic ordering of \( T' \). This contradiction proves \( l_j\leq 2 \) for all $j\in [h]$.

Since every good large set contains at most two vertices from \( \{s_i\}_{i=1}^{2k+2} \), each vertex in the induced subgraph $T'[\{s_i\mid i\in[2k+2]\}]$ has at most one  non-neighbor.  Now we can obtain a tournament $T''$ from \( T'[\{s_i\mid i\in [2k+2]\}] \) by adding at most one \textbf{auxiliary arc} between nonadjacent pairs of vertices in \( T'[\{s_i\mid i\in [2k+2]\}] \) (note that these auxiliary arcs are only added between vertices in the same good large set). The tournament $T''$ has $2k+2$ vertices so it has a vertex, assume \( s_1 \), satisfying $d^-_{T''}(s_1)\geq k+1$. At most one in-arc of $s_1$ is an auxiliary arc, thus $d^-_{T'[\{s_i\mid i\in [2k+2]\}]}(s_1)\geq k $, as desired. 
\end{proof}

The rest of the proof is divided into two cases according to the distribution properties of good large sets, and both cases will derive contradictions against Claim \ref{contra}.

\smallskip

\noindent \textbf{Case 1. There exists a good large set $S_i$ and $p\in [2]$ such that  $S_i^p\cap U\neq\emptyset$ and $S_i^{3-p}\cap W\neq\emptyset$}.

\smallskip

Recall that \(C(S_i)=\{v\in EX(S_i)\mid  S_i\leftrightarrow v\) in $D\}$ for each large set $S_i$.
\begin{claim}\label{claim4}
No other good large set $S_j$ with $j\neq i$ satisfies either of the following two conditions:
\begin{itemize}
\item[$(i)$] There exist vertices in $S_i$ and $S_j$ forming a $2$-cycle in $D$;
\item[$(ii)$] $S_j\cap U\neq \emptyset$ and $S_j\cap W\neq \emptyset$.
\end{itemize}
\end{claim}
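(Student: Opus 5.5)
We argue by contradiction in each part, using the key structural fact already exploited in Claims \ref{non-empty} and \ref{cycle-claim}: $T'$ has no arc from $U$ to $W$, and every single arc of $D'$ (in particular every arc produced by rule (a) between two large sets that form 2-cycles in $D$) lies in $T'$ by optimality condition (i). Since $D$ is an extended semicomplete digraph, vertices in different partite sets are adjacent in the same way as the corresponding vertices of $H$. So if some pair in $S_i\times S_j$ forms a 2-cycle in $D$, then $S_i\leftrightarrow S_j$ in $D$, and rule (a) applies to $S_i^1\cup S_i^2$ and $S_j^1\cup S_j^2$.

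For part $(i)$, suppose $S_i\leftrightarrow S_j$ with $S_j$ good. By rule (a) the arcs between the four halves form the extended 4-cycle $S_a^1\mapsto S_b^1\mapsto S_a^2\mapsto S_b^2\mapsto S_a^1$, where $\{a,b\}=\{i,j\}$; all of these arcs are single in $D'$ and hence in $T'$. Because $S_j$ is good, both $S_j^1\setminus S$ and $S_j^2\setminus S$ are nonempty. Put $u\in S_i^p\cap U$ and $w\in S_i^{3-p}\cap W$. In the 4-cycle, $S_i^p$ dominates one half of $S_j$, say $S_j^q$, and that same half $S_j^q$ dominates $S_i^{3-p}$. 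Take $z\in S_j^q\setminus S$. Then $u\to z$ forces $z\in U$, since $u\in U$ and there is no $U\to W$ arc. But then $z\to w$ is an arc from $U$ to $W$, a contradiction. The only bookkeeping is to confirm that the orientation in rule (a) always gives this ``$S_i^p\to S_j^q\to S_i^{3-p}$'' pattern for the appropriate $q$, whichever of $i<j$ or $j<i$ holds; both orderings give it by inspection of the 4-cycle.

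For part $(ii)$, suppose $S_j\ne S_i$ is good with $S_j\cap U\ne\emptyset\ne S_j\cap W$. By part $(i)$ no vertex of $S_i$ forms a 2-cycle with a vertex of $S_j$. Since $H$ is semicomplete, $S_i$ and $S_j$ are then joined by single arcs all in one direction, say $S_i\mapsto S_j$ (the other direction is symmetric). All these arcs are retained in $D'$ by rule (c) and hence lie in $T'$. Now $u\in S_i\cap U$ dominates every vertex of $S_j$, so no vertex of $S_j$ lies in $W$, contradicting $S_j\cap W\ne\emptyset$. Symmetrically, if $S_j\mapsto S_i$, then a vertex of $S_j\cap U$ dominates $w\in S_i\cap W$, again giving a $U\to W$ arc.

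The main obstacle is part $(i)$. There we must make sure that the witness vertex $z$ avoids $S$, which is exactly where goodness of $S_j$ is used. We must also treat correctly the possibility that $u$ or $w$ lies in $S_i^3$. This cannot happen, because by the hypothesis of Case 1 we have $u\in S_i^p$ and $w\in S_i^{3-p}$ with $p\in[2]$. If needed, we will also check that the positions of $S_i^3$ and $S_j^3$ do not interfere, since only arcs among the $S^1,S^2$ halves are used.
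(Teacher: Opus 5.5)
Your proof is correct and follows the paper's argument. For $(i)$ you use the extended 4-cycle from rule (a) to force an arc from $U$ to $W$ in $T'$, and for $(ii)$ you use $(i)$ to obtain one-directional single arcs between $S_i$ and $S_j$, which rule (c) keeps in $T'$. Your explicit choice of $z\in S_j^q\setminus S$, using that $S_j$ is good, spells out a step the paper leaves implicit.
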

\begin{proof}
Suppose there exists a good large set $S_j$ satisfying condition (i), which means $S_i\leftrightarrow S_j$ in $D$. Pick $s_i^p\in S_i^p\cap U$, $s_i^{3-p}\in S_i^{3-p}\cap W$, $s_j^p\in S_j^p$, and $s_j^{3-p}\in S_j^{3-p}$. Now rule (a) implies that $D'$ contains a  4-cycle $s_i^p\mapsto s_j^p\mapsto s_i^{3-p}\mapsto s_j^{3-p}\mapsto s_i^p$ or $s_j^p\mapsto s_i^p\mapsto s_j^{3-p}\mapsto s_i^{3-p}\mapsto s_j^p$. This  implies that $D'$ has a single arc from $U$ to $W$, which further produce arcs directed from $U$ to $W$ in $T'$, a contradiction.

{Assume now that (ii) holds for some good large set $S_j$.} According to (i), there must be $S_i\mapsto S_j$ or $ S_j\mapsto S_i $ in $D$. Assume $S_i\mapsto S_j$ in $D$ (the case that $ S_j\mapsto S_i $ is similar). Fix a vertex $s^p_j\in S_j^p\cap W$. By property (c), $D'$ contains the single arc $s^p_i\mapsto s^p_j $  from $U$ to $W$. This arc is also contained in $T'$, contradicting that there is no arc from $U$ to $W$ in $T'$. This completes the proof of Claim \ref{claim4}.
\end{proof}

It follows from Claim \ref{claim4} (i)-(ii) that 
\begin{equation}\label{e3.1}
\begin{aligned}
    \text {forming a  2-cycle with some vertex of } S_i \text { in } D \text { are contained in } C (S_i).
\end{aligned}
\end{equation}
\begin{claim}\label{claim5}
$ |C(S_i)\setminus S|\leq |S| - |S'| + 1$.
\end{claim}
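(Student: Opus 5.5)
We will argue exactly as in the proof of Claim~\ref{non-empty}, with the vertices of $S_i$ on the two sides of the cut playing the roles of $x$. Fix $u\in S_i^p\cap U$ and $w\in S_i^{3-p}\cap W$; both exist by the Case~1 hypothesis.

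First, $|C(S_i)|\geq 2$. Otherwise either $C(S_i)\setminus S=\emptyset$, and the bound is trivial because $|S'|\leq|S|$ by \eqref{eq1}, or $|C(S_i)|=1$. In the latter case the first bullet of rule (b) gives $S_i\leftrightarrow v_1$ in $D'$. Then the single vertex of $C(S_i)$ contributes at most $1$, and again the bound holds since $|S|-|S'|+1\geq 1$.

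So assume $|C(S_i)|\geq 2$. By rule (b), every $v\in C(S_i)\setminus S^3_i$ satisfies either $S_i^1\mapsto v\mapsto S_i^2$ or $S_i^2\mapsto v\mapsto S_i^1$ in $D'$. These arcs are single arcs, so they are kept in $T'$ by condition (i) on $T'$ and (P1). Now take $v\in C(S_i)\setminus S$ and place it in $U\cup W$.

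\emph{The vertex $v$ lies in $W$.} Since $T'$ has no arc from $U$ to $W$, $v$ cannot receive an arc from $u\in S_i^p$. Hence $v$ is in the class with $S_i^{3-p}\mapsto v\mapsto S_i^{p}$.

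\emph{The vertex $v$ lies in $U$.} Now $v$ cannot send an arc to $w\in S_i^{3-p}$. Hence $v$ is again in the class with $S_i^{3-p}\mapsto v\mapsto S_i^p$.

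So every vertex of $C(S_i)\setminus S$ lies in the single half-class that is oriented $S_i^{3-p}\mapsto v\mapsto S_i^p$. Every vertex of the other half, of size at least $\lfloor|C(S_i)|/2\rfloor$, therefore lies in $S$.

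It remains to bound that other half against $|S|-|S'|$. The vertices of $C(S_i)$ are singletons $S_t^3$ with $t\neq i$, so they are disjoint from $S'$, which consists of vertices of $S_t^1\cup S_t^2$ over bad sets $S_t$. By \eqref{s'}, $S$ already contains at least $|S'|$ vertices of $\bigcup_{t\in I_B}(S_t^1\cup S_t^2)$, and these are disjoint from $C(S_i)$. This gives
\[
\bigl\lfloor|C(S_i)|/2\bigr\rfloor \le |S\cap C(S_i)| \le |S|-|S'| .
\]
Since the class of $C(S_i)\setminus S$ has size at most $\lceil |C(S_i)|/2\rceil\leq\lfloor |C(S_i)|/2\rfloor+1$, we obtain
\[
|C(S_i)\setminus S|\leq \lfloor |C(S_i)|/2\rfloor+1\leq |S|-|S'|+1 .
\]

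The main obstacle is the exact bookkeeping in rule (b). Which half gets the ceiling, and whether $S_i^3$ (which lies in $C(S_j)$ for other sets, not in $C(S_i)$) interferes, both affect the final "$+1$". If the half-class assignment is unfavorable, we can fall back on the cruder counting of Claim~\ref{non-empty} applied to $u$. There, the vertices of $C(S_i)$ dominated by $u$ in $D'$ must lie in $S$, and one checks that this gives the same bound.
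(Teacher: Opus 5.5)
Your proof is correct and follows the paper's argument. The acyclic order of $T'$, applied with $u\in S_i^p\cap U$ and $w\in S_i^{3-p}\cap W$, forces all of $C(S_i)\setminus S$ into the half-class oriented $S_i^{3-p}\mapsto v\mapsto S_i^p$, so the other half-class lies in $S$ and is disjoint from the at least $|S'|$ vertices of bad large sets in $S$. You phrase this directly where the paper argues by contradiction, and your closing worry and fallback are unnecessary, since $\lceil |C(S_i)|/2\rceil\le\lfloor |C(S_i)|/2\rfloor+1$ already covers either assignment of the ceiling in rule (b).
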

\begin{proof}
 The proof proceeds by contradiction. Assume $|C(S_i)\setminus S|\geq |S|-|S'|+2$, this quantity is at least 2 by \eqref{eq1}. For each vertex $v\in C(S_i)\setminus S$, there exist $2$-cycles between $S_i$ and $v$ in $D$. In view of property (b) and the acyclic ordering of $T'$, this implies that $S_{i}^{3-p}\mapsto v\mapsto S^p_{i}$ in $D'$.  By definition, $ C(S_i)\subseteq EX(S_i)$. Hence, property (b) guarantees at least ${|C(S_i)\setminus S|-1\geq}|S|-|S'|+1$  vertices $w\in C(S_i)$ satisfying $S_i^{p}\mapsto w\mapsto S_i^{3-p}$ are single arcs in $D'$ (note that the cardinalities of these two kinds of vertex sets differ by at most one). Thus these single arcs lie in $A(T')$. Recall that \( S_i^{p} \cap U \neq \emptyset \), \( S_i^{3-p} \cap W \neq \emptyset \) and $W\mapsto U $ in $T'$.  So such vertices \( w \) must lie in \( S \) because of the acyclic ordering of $T'$. However, \( S \) already contains at least \( |S'| \) vertices from the bad large sets, which yields \( |S|\geq |S'|+|S| - |S'| + 1= |S| + 1\), a contradiction.
\end{proof}

We now prove Claim \ref{case2.1}, whose conclusion directly contradicts Claim \ref{contra}, thereby completing the proof of Case 1.

\begin{claim}\label{case2.1}
There exists a pair of vertices  \( y \in U\setminus S' \) and  $x\in W\setminus S'$ such that there are $k+1$ internally disjoint paths from $ x$ to $y$ in $T'$ and $xyx$ is a 2-cycle in $D'$.
\end{claim}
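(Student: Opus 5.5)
We work inside Case 1, with the good large set $S_i$ and $p\in[2]$ fixed so that $S_i^p\cap U\neq\emptyset$ and $S_i^{3-p}\cap W\neq\emptyset$. The plan is to find a large family of $2$-cycles of $D'$ running between $U\setminus S'$ and $W\setminus S'$, apply Claim \ref{cycle-claim} to the $U$-ends, and then route $k+1$ internally disjoint $(x,y)$-paths in $T'$.

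\textbf{Step 1: a family of $2$-cycles in $D'$.} By Menger's theorem there are at least $2k+3$ distinct $2$-cycles of $D$ between $U\setminus S'$ and $W\setminus S'$ in $D\setminus(S\cup S')$. We need these to be $2$-cycles of $D'$, not merely of $D$. By rules (a)--(c), a $2$-cycle of $D$ survives in $D'$ unless it joins $S_j^1\cup S_j^2$ (for a large $S_j$) to another large set, or to a vertex of $C(S_j)$.

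\emph{Discarding cycles through $S_i$.} By \eqref{e3.1} and Claim \ref{claim4}, every $2$-cycle of $D$ meeting $S_i$ uses a vertex of $C(S_i)$. By Claim \ref{claim5} together with \eqref{eq1}, at most $|S|-|S'|+1\le k$ such external vertices lie outside $S$. So we can discard all cycles touching $S_i$ and still keep about $k+2$ cycles, or more if we count with multiplicity.

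\emph{The $(a)$-type and $(b)$-type cases.} For any other large set $S_j$, a $2$-cycle of $D$ through $S_j^1\cup S_j^2$ whose partner lies in a large set is oriented in $D'$ by rule (a). This produces a single arc from $U$ to $W$ whenever both $S_j$ and the partner set meet $U$ and $W$ in the appropriate pattern, contradicting the acyclic ordering of $T'$. This is the same argument used for Claim \ref{claim4}(i) and for $l_j\le 2$ in Claim \ref{cycle-claim}. Consequently the endpoints in large sets must be such that a rule-(b) vertex of $C(S_j)$ absorbs them, and such cycles survive. I would choose the family $\mathcal{C}$ greedily: take distinct $s$'s, allow each $t$ to appear at most twice, and arrange that the hypothesis of Claim \ref{cycle-claim} holds. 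Whenever some $S_j$ contains two $s$'s, the observation above forces $(S_j^1\cup S_j^2)\setminus S\subseteq U\setminus S'$.

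\textbf{Step 2: choosing $y$ and building the paths.} Claim \ref{cycle-claim} gives a vertex $y=s_1$ with at least $k$ in-neighbours $s_{i_1},\dots,s_{i_k}$ inside $T'[\{s_i\}]$. Let $x=t_1$ be its partner, so $xyx$ is a $2$-cycle of $D'$ and $x\in W\setminus S'$. Since $x\in W$, $y\in U$, and $T'$ is $(k-1)$-strong with separator $S$ and no arcs from $U$ to $W$, every vertex of $W$ has arcs into $U$ only through $S$ or via the components. The paths are assembled as follows.
\begin{itemize}
\item Take $k-1$ internally disjoint $(x,\{s_{i_1},\dots,s_{i_k}\})$-paths, obtained by Theorem \ref{menger}, and extend each one by its last arc into $y$.
\item Obtain one additional path from $x$ through $W$ and into $U$ that avoids these, using that $x$ reaches $U$ in $T'-S$ along the topological order.
\end{itemize}
This gives $k+1$ paths in total, which contradicts Claim \ref{contra}.

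\textbf{Main obstacle.} I expect Step 2, specifically securing the $(k+1)$-st path disjointly, to be the hardest part, since Menger in $T'$ only provides $k-1$ paths. What I would try first is to exploit the cycle partners $t_{i_j}\in W$: the arcs $t_{i_j}\to s_{i_j}$ give $k$ vertex-disjoint $W\to U$ crossings avoiding $S$. Combined with the $k-1$ paths through $S$, and with the fact that each $t$ is used at most twice, this should yield the extra path. If that fails, the fallback is to use the bound in Claim \ref{claim5} to argue that $S$ is only partly used.
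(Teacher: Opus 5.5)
\textbf{There is a genuine gap in Step 2, exactly where you expected it, and a second one in the count of Step 1.}

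\textbf{Step 2.} Menger's theorem in the $(k-1)$-strong digraph $T'$ gives only $k-1$ internally disjoint paths from $x$ to $\{s_{i_1},\dots,s_{i_k}\}$. One extra path brings the total to $k$, not $k+1$. You never use the arc $xy$, and your extra path is only said to reach $U$, not $y$.

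The missing idea is that no Menger argument in $T'$ is needed. The partite set $S_j$ containing $x=t_1$ differs from $S_i$ and meets $W$, so by Claim \ref{claim4}(ii) it is disjoint from $U$. The in-neighbours $s_{i_1},\dots,s_{i_k}$ of $y$ all lie in $U$, so none of them is in $S_j$. Each is therefore adjacent to $x$ in $D$, hence in $D'$, hence in $T'$ by condition (i). Since $T'$ has no arc from $U$ to $W$, each such arc is $x\to s_{i_m}$, and likewise $x\to y$. The $k$ paths $x\,s_{i_m}\,y$ together with the arc $xy$ are the required $k+1$ internally disjoint $(x,y)$-paths. The cycle partners $t_{i_m}$ play no role.

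\textbf{Step 1.} Claim \ref{cycle-claim} needs $2k+2$ cycles avoiding $S_i$ (with distinct $s$'s and each $t$ used at most twice). Starting from $2k+3$ cycles and discarding those through the up to $k$ vertices of $C(S_i)\setminus S$ leaves only about $k+3$, which is too few once $k\ge 2$. The paper avoids this by deleting $C(S_i)$ \emph{before} applying Menger:

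\begin{itemize}
\item By Claim \ref{claim5}, $|S\cup S'\cup C(S_i)|\le 2|S|+1\le 2k-1$.
\item So $D\setminus(S\cup S'\cup C(S_i))$ is still $(2k+2)$-strong.
\item Theorem \ref{menger} then gives $2k+2$ internally disjoint paths from one vertex $u\in S_i\cap U$ to $k+1$ chosen vertices of $W\setminus S'$, each hit exactly twice.
\item On each path, the first arc from $U$ to $W$ is absent from $T'$ and so lies on a $2$-cycle of $D$. By \eqref{e3.1}, and because the paths avoid $C(S_i)$, this $2$-cycle misses $S_i$.
\item The $s$'s are distinct internal vertices, and each $t$ occurs at most twice.
\end{itemize}

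You also do not need every cycle in the family to survive in $D'$; your argument for this is only sketched anyway. The paper checks this only for the single chosen cycle $xyx$ at the end, using (P3) and Claim \ref{claim4}(ii).
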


\begin{proof} {The proof sketch is as follows: By the $(4k+1)$-strong connectivity of $D$, enough disjoint paths avoiding $S\cup S'\cup C(S_i)$ yield a family of $2$-cycles $\mathcal{C}$ between $U\setminus(S'\cup S_i)$ and $W\setminus(S'\cup S_i)$. Claim \ref{cycle-claim} guarantees a vertex $y$ of sufficiently large in-degree within $U$, where $x$ is its counterpart vertex in the corresponding $2$-cycle. This in-degree condition guarantees $k+1$ disjoint $(x,y)$-paths in $T'$. Finally, verification that this $2$-cycle is preserved in $D'$ establishes the desired vertex pair.}

Since \( D \) is \(( 4k+1)\)-strong, by Claim \ref{claim5}, the number of internally disjoint paths from \( S_i\cap U \) to \( W\setminus S' \) in \( D\setminus (S\cup S'\cup C(S_i)) \) is at least
\begin{eqnarray}
    4k+1 - |S\cup S'\cup C(S_i)|&\geq& 4k+1-(|S|+|S'|+|C(S_i)\setminus{}S|\nonumber)\\
    &\geq&4k+1-(2|S|+1)\nonumber\\
    &\geq& 4k+1 - (2k-1)\nonumber\\
    &=& 2k+2.\nonumber
\end{eqnarray}
Observe that these \(2k+2\) internally disjoint paths avoid every vertex in \(C(S_i)\). Hence,  no such path in \(D\setminus (S\cup S'\cup C(S_i))\) contains an arc \(p_1p_2\) with \(p_1\in S_i\cap U\) and \(p_2\in W\setminus S'\) (since any such arc would belong to a 2-cycle of $D'$, i.e., $ p_2\in C(S_i)$ by (\ref{e3.1})).   This implies that \( |U\setminus  (S_i\cup S')|\geq 2k+2\), so using that \( |W|\geq |U| \), we obtain \( |W|\geq 2k+2 \), and then \( |W\setminus S'|\geq k+3 \) (since \( |S'|\leq |S|\leq k-1 \)). 

Select a subset $W'=\{w_1,w_2,\dots,w_{k+1}\}\subseteq W\setminus S'$ of cardinality $k+1$, and fix a vertex $u\in S_i\cap U$.  Applying Theorem \ref{menger} with \( x_1:=u \), \( y_i:=w_i \), \( a_1:=2k+2 \) and \(b_i:=2 \) for all \( i\in [k+1] \), yields $2k+2$ internally disjoint paths from $u$ to $W'$ in $D\setminus (S\cup S' \cup C(S_i))$ such that each vertex $w_i$ is the terminal vertex  of these paths. Analogously, no such path contains an arc from $S_i\cap U$ to $ W\setminus S'$. Consequently, we obtain $2k+2$ distinct 2-cycles in $D$ between $U \setminus (S'\cup S_i)$ and $W \setminus S'$ (even $W \setminus (S'\cup S_i)$ by \eqref{e3.1}). Denote these cycles by $\mathcal{C}:=\{s_it_is_i\mid i\in [2k+2]\}$, where $s_i\in U\setminus(S'\cup S_i)$ and $t_i\in W\setminus(S'\cup S_i)$. The vertices $s_i$ are pairwise distinct, and at most two 2-cycles intersect a same vertex $t_i$. {Claim \ref{claim4} (ii) yields that  $(S_j^1\cup S_j^2)\setminus S\subseteq U\setminus S'$, for any partite set $S_j$ with $|S_j\cap \{s_i\mid i\in [2k+2]\}|\geq 2$. Claim \ref{cycle-claim} gives a vertex, say $s_1$, such that $d^-_{T'[\{s_i\mid i\in [2k+2]\}]}(s_1)\geq k $.} Assume $t_1\in S_j$ (recall that $t_1\in W\setminus (S_i\cup S')$, so $ S_j\neq S_i$), thus $S_j\cap W\neq \emptyset$. {Now Claim \ref{claim4} (ii) implies that $S_j\cap U=\emptyset$}. 
Consequently,  $ |N^-_{T'[\{s_i\mid i\in [2k+2]\}]}(s_1)\cap S_j|=0$, implying that there are $k$ internally disjoint $2$-paths from $t_1$ to $y$ within $T'$.  Together with the arc $t_1 s_1$ in $T'$, we obtain $k+1$ internally disjoint paths from $x:=t_1$ to $y:=s_1$ in $T'$.

It remains to verify that the 2-cycle $xyx$ in $D$ is also a 2-cycle in $D'$. Suppose this fails. Recall that $y\in U \setminus (S'\cup S_i)$ and $x\in W \setminus (S'\cup S_i)$. By property (P3), at least one of $x\notin \bigcup_{t}S_t^3$ or $y\notin \bigcup_{t}S_t^3$ holds. Assume $ x\notin \bigcup_{t}S_t^3$. Since $ x\notin S'$, it follows that $ x\in S_j^1\cup S_j^2$ for some good large set $S_j$ ($j\neq i$). Hence Claim \ref{claim4} (ii) guarantees that $ (S_j^1\cup S_j^2)\setminus S\subseteq W$. Let $S_k$ with $k\not\in \{i,j\}$ be the set containing $y=s_1$. Then in $D$ every vertex of $S_j$  induces a 2-cycle with every vertex of $S_k$. According to (b) if $S_k$ is a large set or property (c) if $S_k$ is a singleton set, there is an arc from $U$ to $W$ in $T'$, a contradiction. 
We thus conclude that $xyx$ is a $2$-cycle in $D'$, finishing the proof of this claim.
\end{proof}

\smallskip

\noindent \textbf{Case 2.} For every good large set \( S_i \), either \( (S_i^1 \cup S_i^2) \setminus S \subseteq W \) or \( (S_i^1 \cup S_i^2) \setminus S \subseteq U \).

\smallskip

\begin{claim}\label{2-cycle}
    For a vertex $x\in W \setminus S'$ and a vertex $y\in U \setminus S'$, if $xyx$ forms a 2-cycle in $D$, then $xyx$ is also a 2-cycle in $D'$.
\end{claim}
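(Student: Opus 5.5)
We argue by contradiction. Suppose $x\in W\setminus S'$, $y\in U\setminus S'$, $xyx$ is a 2-cycle in $D$, but not in $D'$. Only rules (a) and (b) destroy 2-cycles of $D$; rule (c) keeps every 2-cycle between singleton sets. So at least one of $x,y$ lies in $S_t^1\cup S_t^2$ for some large set $S_t$. Since $x,y\notin S'$, that large set is good. Write $x\in S_a$ and $y\in S_b$ with $a\neq b$; they are different classes because they are adjacent. The aim is to show that $D'$ then has a single arc from $U\setminus S$ to $W\setminus S$. Every single arc of $D'$ survives in $T'$ by condition (i), since $T'$ keeps all adjacencies of $D'$ and single arcs have only one orientation available. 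So this contradicts the fact that $T'$ has no arc from $U$ to $W$.

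\textbf{Subcase A: both $S_a$ and $S_b$ are large.} Since $x,y$ are not both in $S^3$-parts (else the 2-cycle is kept by (b) or (P1)), one of them, say by symmetry $x$, lies in $S_a^1\cup S_a^2$ of a good set. The Case 2 hypothesis gives $(S_a^1\cup S_a^2)\setminus S\subseteq W$, and goodness gives $S_a^1\setminus S\neq\emptyset$ and $S_a^2\setminus S\neq\emptyset$.
\begin{itemize}
\item If $y\in S_b^1\cup S_b^2$, then $S_b$ is good and $(S_b^1\cup S_b^2)\setminus S\subseteq U$. Since $S_a\leftrightarrow S_b$, rule (a) gives the alternating 4-cycle on $S_a^1,S_b^1,S_a^2,S_b^2$. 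So $y$ has single out-arcs to all of one of $S_a^1,S_a^2$, which has a vertex outside $S$ lying in $W$. This gives a $U\to W$ arc.
\item If $y=S_b^3$, then $y\in C(S_a)$, because $y\in EX(S_a)$ forms 2-cycles with all of $S_a$. If $|C(S_a)|=1$, rule (b) keeps $S_a\leftrightarrow y$, contradicting that $xyx$ is destroyed. If $|C(S_a)|\ge2$ and $y\neq S_a^3$-type, rule (b) gives $S_a^q\mapsto y\mapsto S_a^{3-q}$ for some $q$. Then $y$ sends a single arc to $S_a^{3-q}\setminus S\subseteq W$, a contradiction.
\end{itemize}

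\textbf{Subcase B: one of the sets is a singleton.} The same argument applies when one endpoint is a singleton class, since singleton vertices are of $S^3$ type. For instance, let $x\in S_a^1\cup S_a^2$ with $S_b=\{y\}$. Then $y\in C(S_a)$ and rule (b) yields $y\mapsto S_a^{3-q}$, which meets $W\setminus S$. Symmetrically, let $y\in S_b^1\cup S_b^2$ with $x$ a singleton or an $S^3$ vertex. Then $x\in C(S_b)$, and rule (b) gives $S_b^q\mapsto x$ for some $q$. Here $S_b^q\setminus S\neq\emptyset$ lies in $U$ by the Case 2 hypothesis, giving an arc from $U$ to $W$.

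\textbf{Main obstacle.} The delicate step is justifying that the endpoint in $S^3$ (or the singleton) really belongs to $C(S_a)$ (or $C(S_b)$), and that $|C|\ge2$ whenever the 2-cycle is destroyed. The first holds because adjacency in an extended semicomplete digraph is uniform across parts: a 2-cycle between $v$ and one vertex of $S_a$ gives 2-cycles with all of $S_a$. The second is (P3) in contrapositive form. A side case needs care: $x$ in $S_a^1\cup S_a^2$ while $y=S_b^3$ where $S_b$ is a large set possibly \emph{bad}. The argument above never uses goodness of the set containing the $S^3$-endpoint, so it still goes through.
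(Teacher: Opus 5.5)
Your proof is correct and follows the paper's argument. If both endpoints lie in $S^3$-parts, the 2-cycle is kept. Otherwise one endpoint lies in $S_j^1\cup S_j^2$ for a good set $S_j$, whose vertices outside $S$ all sit on one side. Then rule (a), or rule (b) with $|C(S_j)|\ge 2$, produces a single arc from $U$ to $W$ that survives in $T'$, while $|C(S_j)|=1$ keeps the 2-cycle. One small slip: a destroyed 2-cycle forces $|C|\geq 2$ directly by the $|C(S_i)|=1$ clause of rule (b), as you already use in Subcase A, not by the contrapositive of (P3).
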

\begin{proof}
   The claim is obvious whenever \(x,y\in \bigcup_t S_t^3\) by (P1). We may therefore suppose \(x\notin \bigcup_t S_t^3\); the case \(y\notin \bigcup_t S_t^3\) follows by symmetry. As \(x\notin S'\), we have \(x\in S_j^1\cup S_j^2\) for some good large set \(S_j\), and in this case,  \((S_j^1\cup S_j^2)\setminus S\subseteq W\). Now let \(y\in S_m\). Since \(y\in U\setminus S'\), two possibilities arise: either \(y\in C(S_j)\), or \(S_m\) is a good large set. If \(S_m\) is a good large set, then \((S_m^1\cup S_m^2)\setminus S\subseteq U\). This yields an arc from \(y\in U\) to \((S_j^1\cup S_j^2)\setminus S\subseteq W\) in \(T'\) according to rule (a), a contradiction. Next suppose \(y\in C(S_j)\) and \(|C(S_j)|\geq 2\). By rule (b), we again obtain an arc from \(y\in U\) to \((S_j^1\cup S_j^2)\setminus S\subseteq W\) within \(T'\), another contradiction. The only remaining possibility is that \(y\in C(S_j)\) and \(|C(S_j)|=1\). In this case, rule (b) guarantees that \(xyx\) forms a 2-cycle in \(D'\).
\end{proof}

\begin{claim}\label{tournament}
There is no good large set \( S_i \) such that \( S_i \cap U \neq \emptyset \), consequently, \( U \setminus S'\subseteq \bigcup_{t\in [h]} S_t^3 \).
\end{claim}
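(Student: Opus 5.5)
We argue by contradiction: suppose some good large set $S_i$ meets $U$. By the Case~2 hypothesis, $(S_i^1\cup S_i^2)\setminus S\subseteq U$. The plan is to manufacture a pair $x\in W\setminus S'$, $y\in U\setminus S'$ such that $xyx$ is a $2$-cycle of $D$ and there are $k+1$ internally disjoint $(x,y)$-paths in $T'$. By Claim~\ref{2-cycle} this $2$-cycle also lies in $D'$, so the pair contradicts Claim~\ref{contra}. The route is the one used in Claim~\ref{case2.1}: first produce many $2$-cycles between $U\setminus S'$ and $W\setminus S'$, then apply Claim~\ref{cycle-claim} to extract a vertex $y$ of large in-degree.

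\textbf{Step 1: size of $W\setminus S'$.} Since $|S\cup S'|\le 2k-2$ and $D$ is $(4k+1)$-strong, Menger's theorem (Theorem~\ref{menger}) gives $2k+3$ internally disjoint paths from $U\setminus S'$ to $W\setminus S'$ in $D\setminus(S\cup S')$. Every arc from $U$ to $W$ in $D$ is missing from $T'$. Single arcs of $D$ are kept in $D'$ and hence in $T'$, so each such arc lies on a $2$-cycle of $D$. Each path therefore ends with a $2$-cycle $s t s$ with $s\in U\setminus S'$ and $t\in W\setminus S'$, and the endpoints $s$ are distinct. As in Claim~\ref{case2.1}, this yields $|W|\ge |U|\ge 2k+2$, so $|W\setminus S'|\ge k+3$.

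\textbf{Step 2: the family of $2$-cycles.} Fix $u\in S_i\cap U$ and a set $W'=\{w_1,\dots,w_{k+1}\}\subseteq W\setminus S'$. Apply Theorem~\ref{menger} with $a_1=2k+2$ and $b_j=2$ in $D\setminus(S\cup S')$. The last $U$-to-$W$ arc on each path gives a $2$-cycle $s_jt_js_j$. The $s_j$ are distinct, and each $t_j$ appears in at most two of the cycles. The hypothesis of Claim~\ref{cycle-claim} holds automatically in Case~2: any partite set containing two of the $s_j$ meets $U$, so it is a good large set (or a bad set, whose non-$S$ part lies in $S'$) and its part outside $S$ lies in $U$. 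Claim~\ref{cycle-claim} then gives $s_1$ with $d^-_{T'[\{s_j\}]}(s_1)\ge k$. Put $y=s_1$ and $x=t_1$.

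\textbf{Step 3: the $k+1$ paths.} We need $t_1\to N^-(s_1)\cap\{s_j\}$ in $T'$. This holds provided $t_1$ is adjacent to each such in-neighbour in $T'$, since any arc between $W$ and $U$ must then be directed from $W$ to $U$. If some in-neighbour $s_j$ lies in the same partite set as $t_1$, that set meets both $U$ and $W$. This is forbidden for good large sets in Case~2, and impossible for singletons. Bad sets also meet only one of $U,W$ outside $S'$, because $t_1,s_j\notin S'$. So the $k$ two-paths $t_1 s_j s_1$ together with the arc $t_1s_1$ give $k+1$ internally disjoint $(x,y)$-paths in $T'$.

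\textbf{Main obstacle.} I expect the hard part to be where the assumption $S_i\cap U\ne\emptyset$ is actually used, rather than merely that $U\setminus S'\ne\emptyset$. My first attempt would exploit rules (a)--(b) for $S_i\subseteq U$. Every large set $S_j$ forming $2$-cycles with $S_i$ receives single arcs from $S_i$ in $D'$, which forces $(S_j^1\cup S_j^2)\setminus S\subseteq U\cup S$. Every vertex of $C(S_i)$ with $|C(S_i)|\ge2$ is also forced into $U\cup S$, except at most one. This should guarantee that the $2$-cycles found from $u$ end in $W$ at vertices $t_j$ that are not in the same part as the $s_j$, and that Claim~\ref{2-cycle} applies. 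If this degree argument fails, I would fall back on the out-degree counting used in Claim~\ref{non-empty}. Once no good large set meets $U$, the consequence $U\setminus S'\subseteq\bigcup_t S_t^3$ is immediate: a vertex of $U\setminus S'$ in $S_t^1\cup S_t^2$ would lie in a good large set.
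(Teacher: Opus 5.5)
Your proposal has a genuine gap. The hypothesis that a good large set meets $U$ is never used in Steps~1--3, and the step that needs it is asserted without proof. That step is the claim that the $U$-ends $s$ of the crossing $2$-cycles are pairwise distinct. Everything downstream rests on it: the bound $|W|\ge|U|\ge 2k+2$, hence the existence of $W'$, and the distinct $s_j$ required by Claim~\ref{cycle-claim}.

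Internal disjointness only separates crossing vertices that are internal to the paths. In Step~2 all $2k+2$ paths start at $u$, so several may leave $U$ through an arc out of $u$ itself, and their $s_j$ then coincide. In Step~1 you do not say where the paths start. From a single vertex the same problem arises; from $2k+3$ distinct vertices of $U\setminus S'$ you would be presupposing $|U\setminus S'|\ge 2k+3$. The reference to Claim~\ref{case2.1} does not transfer: there the bound came from deleting $C(S_i)$ and using \eqref{e3.1}, which guarantees that no path leaves $U$ at a vertex of $S_i$. A sanity check confirms something is missing. If you replace $u$ by an arbitrary vertex of $U\setminus S'$, Steps~1--3 run verbatim and would dispose of all of Case~2. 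Yet the paper needs a separate degree-counting argument for $|U\setminus S'|\le 2k+1$ right after this claim.

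The missing ingredient is the first step of the paper's proof. Since $S_i$ is good and $(S_i^1\cup S_i^2)\setminus S\subseteq U$, both $S_i^1\setminus S$ and $S_i^2\setminus S$ are nonempty subsets of $U$. By uniform adjacency in an extension, a $2$-cycle partner of one vertex of $S_i$ is a $2$-cycle partner of all of $S_i$. It therefore lies either in $S_j^1\cup S_j^2$ for a large $S_j$ with $S_i\leftrightarrow S_j$, or in $C(S_i)$. In the first case, and in the second when $|C(S_i)|\ge 2$, rule (a) or (b) gives it a single in-arc of $D'$, hence of $T'$, from $S_i^1\setminus S$ or $S_i^2\setminus S$, so it is not in $W$. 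Thus $S_i\cap U$ has at most one $2$-cycle partner in $W\setminus S'$, namely the vertex of $C(S_i)$ when $|C(S_i)|=1$. Hence at most one path of a fan from $u$ leaves $U$ at $u$. This gives $|U\setminus S'|\ge 2k+2$ and $|W\setminus S'|\ge k+3$, and it also makes the $s_j$ distinct in your second fan to $W'$.

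Your \emph{Main obstacle} paragraph contains these facts; the qualifier \emph{except at most one} is unnecessary. But you aim them at Step~3 and Claim~\ref{2-cycle}, which hold throughout Case~2 without the hypothesis. Once repaired, your single-fan Step~2 is a harmless variant of the paper's second Menger application from $2k+2$ distinct vertices of $U\setminus S'$.

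A smaller point, shared with the paper's text: Case~2 says nothing about the vertex of $S_j^3$. So the opening inference $(S_i^1\cup S_i^2)\setminus S\subseteq U$ needs $S_i\cap U$ to meet $S_i^1\cup S_i^2$; that suffices for the \emph{consequently} part. Similarly, in Step~3, $t_1$ and an in-neighbour of $s_1$ can share a good large set when one of them is its $S_j^3$-vertex. Your phrase \emph{forbidden for good large sets in Case~2} does not cover that case.
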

\begin{proof}
Suppose for contradiction there exists a good large set \(S_i\) such that \(S_i \cap U \neq \emptyset\); this forces \((S_i^1 \cup S_i^2)\setminus S \subseteq U\) in this case. Since \(D\) is \((4k+1)\)-strong, there exist at least \(2k+2\) internally disjoint paths from a vertex \(u\in S_i\cap U\) to \(W\) in \(D\setminus(S\cup S')\). These yield \(2k+2\) distinct 2-cycles $\mathcal{C}$ connecting vertices in \(U\setminus S'\) to vertices in \(W\setminus S'\). We first show that  $|V(\mathcal{C})\cap (U\cap S_i)|\leq 1$.\ Suppose there exist two distinct 2-cycles $u'v'u'$ and $u''v''u''$ in $\mathcal{C}$ such that $u',u''\in S_i\cap U$. Recalling that $(S_i^1\cup S_i^2)\setminus S\subseteq U$, it follows from properties (a) and (b) that either $S_i^1\mapsto v'$ or $S_i^2\mapsto v'$ in $D'$, yielding an arc from $U$ to $W$ in $T$, a contradiction. Consequently, \(|U\setminus(S_i\cup S')| \geq 2k+1\) (i.e., $|U\setminus  S'| \geq 2k+2 $). By our assumption \(|W| \geq |U|\), we have \(|W| \geq 2k+2\), so \(|W\setminus S'| \geq k+3\) as \(|S'| \leq |S| \leq k-1\). 

Select \( k+1 \) vertices \( W' := \{w_1, \dots, w_{k+1}\} \subseteq W \setminus S' \) and \( 2k+2 \) vertices \( U' := \{u_1, \dots, u_{2k+2}\} \subseteq U \setminus S' \). Applying Theorem \ref{menger}  with \( x_i = u_i \), \( x_{k+1+i} = u_{k+1+i} \), \( y_i = w_i \), \( a_i = a_{k+1+i} = 1 \), \( b_i = 2 \) for all \( i \in [k+1] \), we obtain \( 2k+2 \) internally disjoint paths from \( U' \) to \( W' \) {in $D$}. These paths yield \( 2k+2 \) distinct 2-cycles \(\mathcal{C}:= \{s_i t_i s_i \mid i \in [2k+2]\} \) between \( U \setminus S' \) and \( W \setminus S' \), where \( s_i \in U\setminus S' \) and \( t_i \in W\setminus S' \). The vertices $s_i$ are pairwise distinct, and at most two 2-cycles intersect a same vertex $t_i$. {Under the setting of Case 2, the inclusion $(S_j^1\cup S_j^2)\setminus S\subseteq U\setminus S'$ clearly holds for every partite set $S_j$ satisfying $|S_j\cap \{s_i\mid i\in [2k+2]\}|\geq 2$. By Lemma \ref{cycle-claim}, there exists a vertex $s_1$ such that $d^-_{T'[\{s_i\mid i\in[2k+2]\}]}(s_1)\geq k.$} Assume $t_1\in S_j$ (recall that $t_1\notin S_i$, so $ S_j\neq S_i$), thus $S_j\cap U\neq \emptyset$. If $ |N^-_{T'[\{s_i\mid i\in [2k+2]\}]}(s_1)\cap S_j|\geq 1$, which implies that $S_j$ is a good large set with $S_j\cap W\neq \emptyset$. This is impossible, consequently,  $ |N^-_{T'[\{s_i\mid i\in [2k+2]\}]}(s_1)\cap S_j|=0$, so there are $k$ internally disjoint 2-paths from $ t_1$ to $y$ in $T'$, together with Claim \ref{2-cycle}, which contradicts with Claim \ref{contra}.
\end{proof}

We complete the proof by considering two cases according to whether or not we have \( |U \setminus S'| > 2k+1 \). {Our main idea is as follows. If $|U\setminus S'|>2k+1$, there exist vertices of sufficiently large in-degree in the induced tournament subdigraph, which yields vertex pairs violating Claim \ref{contra}. If $|U\setminus S'|\leq 2k+1$, precise degree bounds lead to the same contradiction. Consequently, our initial assumption that $D'$ admits no spanning $k$-strong oriented subdigraph is invalid.}

Suppose first that \( |U \setminus S'| > 2k+1 \). Since \( D \) is \( (4k+1) \)-strong, Menger's theorem guarantees the existence of at least \( 2k+2 \) internally disjoint paths $\mathcal{P}$ from \( U \setminus S' \) to \( W \setminus S' \) in \( D \setminus (S \cup S') \) with distinct initial vertices. Choose $\mathcal{P}$ such that $|V(\mathcal{P})|$ is minimized. The minimality of $\mathcal{P}$ means that $\mathcal{P}$ {corresponds to a collection  of $2k+2$ distinct 2-cycles $\mathcal{C}$ of $D$} with distinct initial vertices. 
Let \( U':=V(\mathcal{C})\cap U (\subseteq U \setminus S') \), then \( |U'| = 2k+2 \). It follows from  Claim \ref{tournament} that $T'[U']$ is a tournament. This yields a vertex \( y \in U' \) such that \( d_{T'[U']}^-(y) \geq k+1 \), and a vertex \( x \in N_{T'[W\setminus S']}^+(y) \) in $D$.  Also we obtain $k+1$ internally disjoint $(x,y)$-paths in $T'$ and $xyx$ is a 2-cycle in \( D' \) ({by} Claim  \ref{2-cycle}). However this  contradicts Claim \ref{contra}.

Hence we must have \( |U\setminus S'| \leq 2k+1 \). By Claim  \ref{tournament},  $T'[U\setminus S']$ is a tournament, so it has  a vertex $y$ such that \( d_{T'[U\setminus S']}^-(y) \geq \frac{|U\setminus S'|-1}{2} \). By a simple calculation, in $D$, 
\[ 
\begin{aligned}
    d_{W\setminus S'}^+(y)& \geq d_{D}^+(y)-d^+_{S}(y)-d^+_{S'}(y)-d^+_{U\setminus S'}(y)\\
    &\geq {4k+1} - |S|-| S'| - {(|U\setminus S'|-1)}\\
    &\stackrel{(\ref{eq1})}{\geq } 4k+1-(k-1)-(k-1)-|U\setminus S'|{+1}\\
    &=2k+{4}-|U\setminus S'|\\
    &= 2(k-\frac{|U\setminus S'|-1}{2})+{3}\geq {3}.
\end{aligned}
\]  
Suppose  first that there exists an out-neighbour \( x\) of $y$ in $ W\setminus S' $  in $D$, so that $x\in S_i^1\cup S_i^2$ for some $i\in [h]$. Then the definition of $S'$ in (\ref{S'def}) implies that $S_i$ is good and, in Case 2.2, we must have $(S_i^1\cup S^2_i)\setminus S \subseteq W$. 
Since $x\notin S'$ it follows from Claim \ref{2-cycle} that $xyx$ is also a 2-cycle in $ D'$. Now, our aim is to lower bound the out-degree of $ x$ in $W\setminus S' $ within $T'$. Recall that $I_B := \{i\in [h] \mid S_i  \text{ is a bad large set}\}$ and the definition of $S'$ from (\ref{S'def}), we get that, 
in $D$, \[N^+_D(x)\setminus (N^+_{U\setminus (S'\cup \{y\})}(x)\cup N^+_{\bigcup_{i \in I_B}  (S_i^1 \cup S_i^2)}(x))\subseteq (W\setminus S')\cup (S\setminus \bigcup_{i \in I_B}  (S_i^1 \cup S_i^2) )\cup \{y\}. \] Define $K:=(W\setminus S')\cup (S\setminus \bigcup_{i \in I_B}  (S_i^1 \cup S_i^2) )\cup \{y\}$. Thus, in $D$,
$$ d^+_K(x)\geq d^+_D(x)-d^+_{U\setminus (S'\cup \{y\})}(x)-d^+_{\bigcup_{i \in I_B}  (S_i^1 \cup S_i^2)}(x). $$ 
Additionally, each arc in $D$ from $x$ to a vertex $v\in N^+_K(x)$ has one of the following properties:
\begin{itemize}
    \item the arc $e$ is a single arc in $D$, let $E_1$ be the set of such single arcs, and let $m_1$ be the size of $E_1$;
    \item the vertex $v$ belongs to $C(S_i)$;  
    \item the vertex $v$ belongs to a good large set $S_j$, in this case, since $S_j$ is good large set, it follows from Claim \ref{tournament} that we have $S_j\cap ( \bigcup_{i \in I_B}  (S_i^1 \cup S_i^2)\cup U \setminus (S'\cup \{y\})) =\emptyset$. 
\end{itemize}
Because of property (b) and the acyclic order of $T'$, we must have $ S_i\mapsto U\setminus (S'\cup \{y\})$ in $D$.  Consequently, $ N^+_K(x)\setminus V(E_1)= C(S_i)\cup \bigcup_{t\in I}V(S_t)$ in $D$,   where $I\subseteq \{1,\ldots{},h\}$ is the set of indices of those good large sets that form $2$-cycles with $S_i$ in $D$. By (P2), the number of single out-arcs of $x$ in $K$ within $D'$ is at least 
$$ 
\begin{aligned}
    & ( d^+_D(x)-d^+_{U\setminus (S'\cup \{y\})}(x)-d^+_{\bigcup_{i \in I_B}  (S_i^1 \cup S_i^2)}(x) -m_1)/2-1+m_1\\
    &\geq ( d^+_D(x)-d^+_{U\setminus (S'\cup \{y\})}(x)-d^+_{\bigcup_{i \in I_B}  (S_i^1 \cup S_i^2)}(x))/2-1.
\end{aligned}
$$ 
Further, in $T'$, 
\[
\begin{aligned}
    d^+_{T'[W\setminus S']}(x)&\stackrel{(P2)}{\geq } (d^+_D(x)-d^+_{U\setminus (S'\cup \{y\})}(x)-d^+_{\bigcup_{i \in I_B}  (S_i^1 \cup S_i^2)}(x))/2-1-|S\setminus \bigcup_{i \in I_B}  (S_i^1 \cup S_i^2)|-|\{y\}|\\
    &\geq \left(4k+1-(|U\setminus S'|-1)-|\bigcup_{i \in I_B}  (S_i^1 \cup S_i^2)|\right)/2-2-|S\setminus \bigcup_{i \in I_B}  (S_i^1 \cup S_i^2)|\\
    &\stackrel{(\ref{s'})}{\geq } \left(4k-|U\setminus S'|-2|S\cap \bigcup_{i \in I_B}  (S_i^1 \cup S_i^2)|\right)/2-1-|S|+|S\cap \bigcup_{i \in I_B}  (S_i^1 \cup S_i^2)|  \\
    &=\left(4k+4-|U\setminus S'|\right)/2-2-|S|\geq k-|U\setminus S'|/2.
\end{aligned}
\]
Thereby,
    $$d^+_{T'[W\setminus S']}(x)+d_{T'[{U}\setminus S']}^-(y)\geq \lceil k-|U\setminus S'|/2+\frac{|U\setminus S'|-1}{2}\rceil \geq k.$$ 
Together with the arc $ xy$ in $T$, we can find $k+1$ internally disjoint paths from $x$ to $y$ in $T'$, which contradicts Claim \ref{contra}. 

Thus every out-neighbour of $y$ in $W\setminus S'$ belong to $EX(y)$.
Hence, $y$ has $ 2(k-\frac{|U\setminus S'|-1}{2})+1$ out-neighbours in $ EX(y)\cap (W\setminus S') $  within $D$. Such $ 2(k-\frac{|U\setminus S'|-1}{2})+1$ out-neighbours induce a tournament in $T'$, so there is a vertex $x$ with $d^+_{T'[W\setminus S']}(x)\geq k-\frac{|U\setminus S'|-1}{2} $ in $T'$.  This implies  that 
    $$d^+_{T'[W\setminus S']}(x)+d_{T'[U \setminus S']}^{-}(y)\geq k-\frac{|U\setminus S'|-1}{2}+\frac{|U\setminus S'|-1}{2} \geq k,$$ 
    which contradicts Claim \ref{contra}. Hence the proof is completed.
\end{proof}

\section{Proof of Theorem \ref{main1}}




We now complete the proof of Theorem \ref{main1} using Theorem \ref{extended}. For convenience we repeat the statement of  Theorem \ref{main1}. 

\noindent{}{\bf Theorem \ref{main1}.}
 For every integer $k \geq 1$, every $(4k+1)$-strong semicomplete composition  $D = H[S_1, \dots, S_h]$, {satisfying that either $h\geq 3$ or }$|V(D) \setminus S_i| \geq 2k$ for all $S_i$, contains a spanning $k$-strong oriented subgraph.

\begin{proof}
The proof is divided into two cases depending on whether $h=2$ or not. If $h = 2$, for each part $S_i$ of the composition $D$, by the assumption $\left| V(D) \setminus S_i \right| \geq 2k$ for all $i\in [h]$, we know that $|S_1| \geq 2k$ and $|S_2| \geq 2k$. We may partition each $S_i$ into two disjoint non-empty subsets $S_i^1$ and $S_i^2$ that are as balanced as possible, satisfying $|S^1_i| = \lfloor |S_i| \rfloor /2 $, $ |S^2_i| = \lceil |S_i| \rceil /2$ and $S_i^1 \cap S_i^2 = \emptyset$. Consequently, $|S_i^j| \geq k$ for $i,j \in [2]$. Let $D'$ be the digraph with vertex set $V(D') = V(D)$ and arc set satisfying $S^1_1 \mapsto  S^1_2 \mapsto S^2_1\mapsto S^2_2 \mapsto S^1_1$, and that $D'$ has no arc inside $S_1$ or $S_2$. It is easy to verify that $D'$ is the desired $k$-strong oriented subgraph. If $h \geq 3$, let $D_0 = H[S_1^0, \dots, S_h^0]$ be the digraph obtained from $D$ by removing all the arcs that lie inside $S_i$. 
It follows from Lemma \ref{scd-com} that $D_0$ is a $(4k+1)$-strong extended semicomplete digraph. By Theorem \ref{extended}, $D_0$ contains a spanning $k$-strong oriented subdigraph, as desired, which completes the proof.
\end{proof}

\section{Proof of Theorem \ref{thm22}}

For convenience we repeat the statement of the theorem here.

\noindent{}{\bf Theorem \ref{thm22}.} 
Let $k$ be a positive integer. Every $5k$-strong semicomplete split digraph $D=(V_1,V_2,A)$ contains a spanning \( k \)-strong oriented semicomplete split digraph.

We prove the theorem by induction on $k$.
In the base case $k=1$, we are dealing with  a 5-strong split digraph on at least 5 vertices. Clearly such a digraph has no vertex partition $(X,V\setminus X)$ with precisely two arcs between the sets. Hence the claim follows from {Theorem \ref{thm:mixedkarcstrong}}. 
Now consider the case $k \geq 2$. Assume that the statement holds for all $k -1$. 
By the induction hypothesis for $k-1\ge 1$, $D$ admits a spanning $(k-1)$-strong oriented semicomplete split digraph $T'$. Assume for contradiction that \( D \) does not have any spanning \( k \)-strong oriented semicomplete split digraph. Let $T$ be a spanning $(k-1)$-strong oriented semicomplete split subdigraph of $D$ satisfying the following minimality conditions:
\begin{enumerate}[label=(\roman*), leftmargin=*]
    \item[(i)] The number of minimum separating sets of $T$ is as small as possible.
    \item[(ii)] $T$ has a minimum separating set $S$ such that the number of strong components of $T-S$ is as small as possible.
\end{enumerate}
Let \( S \) be such a \((k-1)\)-separating set of \( T \),  and let \( p \) be the number of strong components  in \( T - S \), which is minimal over all choices of \( S \). Let \( T_1, T_2, \ldots, T_p \) denote an acyclic order of \( T - S \) (i.e., for any \( i < j \), \( T_i\mapsto T_j \) in $T$). Let \( W = V(T_1 \cup \cdots \cup T_{p-1}) \) and \( U = V(T_p) \). We assume \( |W| \geq |U| \) (the case \( |U| > |W| \) can be handled by considering the converse digraph of \( D \)). The rest of the proof splits into two cases depending on the size of $U$.

\noindent \textbf{Case 1.  $|U|\geq 4k$}

By the assumption above, we also have  $|W|\geq 4k$. By Menger's theorem, there are $4k$ disjoint paths from $U$ to $W$ in $T-S$. They correspond to $4k$ disjoint 2-cycles in $D$. Let $\mathcal{C}$ be the set of these 2-cycles, let $U':=U\cap V(\mathcal{C})$, and let $W':=W\cap V(\mathcal{C})$. By the pigeonhole principle, there are at least $2k$ vertices in $W'$ that are in $V_2$ (semicomplete) or in $V_1$. If they are in $V_1$, then $U'$ must have at least $2k$ vertices in $V_2$. In short, either $|U'\cap V_2|\geq 2k$ or $|W'\cap V_2|\geq 2k$; assume without loss of generality 
that  $|U'\cap V_2|\geq 2k$. Then $U'$ contains a vertex $x\in V_2$ such that $x$ has at least $k$ in-neighbours $X^-$ in $V_2\cap U$. Assume that the 2-cycle in $\mathcal{C}$ adjacent to $x$ is $xyx$. Then $y\in W$ and $y\rightarrow X^-$ in $T$. According to Lemma \ref{keylemma}, we obtain a \((k-1)\)-strong spanning oriented subdigraph \( T_{\{\stackrel{\leftarrow}{yx}\}} \) of \( D \). Combining this with the fact that \( D \) does not contain a \( k \)-strong spanning oriented subdigraph, it follows that every \((k-1)\)-separating set of \( T_{\{\stackrel{\leftarrow}{yx}\}} \) is also a separating set of \( T \). Therefore, either the number of \((k-1)\)-separating sets in \( T_{\{\stackrel{\leftarrow}{yx}\}} \) is less than that in \( T \), or \( T_{\{\stackrel{\leftarrow}{yx}\}} \) has the same number of \((k-1)\)-separating sets as \( T \) but the number of strongly connected components of \( T_{\{\stackrel{\leftarrow}{yx}\}}-S \) is less than that of \( T - S \), which contradicts the choice of \( T \).

  \noindent \textbf{Case 2.  $|U|< 4k$}

As $D$ is $f(k)$-strong, by Menger's theorem, every vertex of $U$ is incident with a 2-cycle whose other vertex is in $W$. Suppose $m_1:=|U\cap V_1|$ and $m_2:=|U\cap V_2|$, then $m_1+m_2=|U|$. For each vertex $v\in U\cap V_1$, we have $d^+_{D[W]}(v)\geq f(k)-m_2-|S|> m_1$ in $D$ and $N^+_{D[W]}(v)\subseteq V_2$. Thus there is a vertex $v^+\in N^+_{D[W]}(v)$ with $d^+_{T[W\cap V_2]}(v^+)\geq \frac{d^+_{D[W]}(v)-1}{2}> \frac{m_1-1}{2}$ in $T$. If $\dfrac{m_1-1}{2}\geq k $, then applying Lemma \ref{keylemma}, as above (to the arc $v^+v$) we get a contradiction. Thus we must have 
   \begin{equation}\label{e1}
   \dfrac{m_1-1}{2}< k.
   \end{equation}
   As $T[U\cap V_2]$ is a tournament, there is a vertex $x\in U\cap V_2$ with $d^-_{T[U\cap V_2]}(x)\geq (m_2-1)/2$. Also, $d^+_{W}(x)\geq f(k)-|S|-(|U|-1)\geq 2$ in $D$, since $x$ has at most $|U|-1$ out-neighbours in $U$. Let $z\in N^+_{W}(x)$, be chosen so that $z\leftrightarrow x$ is a 2-cycle in $D$. Lemma \ref{keylemma} applied to the arc $zx$ implies that
   \begin{equation}\label{e3}
  (m_2-1)/2< k, \text{ i.e. } m_2\leq 2k.
   \end{equation}

If $N^+_{W}(x)\subseteq V_2$, then there is a vertex $w\in N^+_{W}(x)$ with $d^+_{T[W\cap V_2]}(w)\geq \frac{f(k)-k+1-|U|}{2}$. On the other hand,
\begin{equation}\label{e2}
  (m_2-1)/2+ \frac{f(k)-k+1-|U|}{2}\leq d^-_{T[U\cap V_2]}(x)+d^+_{T[W\cap V_2]}(w)< k.
   \end{equation}
   \noindent where the last inequality follows Lemma \ref{keylemma} as $w\rightarrow U \cap V_2$ and $W \cap V_2 \to x$.
   Using that $|U|=m_1+m_2$, and combining with (\ref{e1}) and (\ref{e2}), we obtain that $ f(k)< 5k $, a contradiction.
   Hence, $N^+_{W}(x)\cap V_1\neq \emptyset$.   Let $y\in N^+_{W}(x)\cap V_1$, and note that, in $D$,    
   \begin{equation}\label{y-outinD}
       d^+_{D[W]}(y)\geq f(k)-m_2-|S|\geq 2k+1,
\end{equation} 
where the first inequality holds because no vertex in $U\cap V_1$ is adjacent to $y$. 
   As $yxy$ is a 2-cycle in $D$, Lemma \ref{keylemma} implies that we have 
   \begin{equation}\label{boundinT}
       d^+_{T[W]}(y)+ d^-_{T[U\cap V_2]}(x)<k.
   \end{equation} Let $m_3:=d^+_{T[W]}(y)$. It follows from (\ref{y-outinD}) that there is a set $F\subseteq W\cap V_2$ of order $f(k)-m_2-|S|-m_3$ such that for each vertex $s\in F$, $sys$ is a 2-cycle in $D$, but $s\rightarrow y$ in $T$. Let $l:=k-\frac{m_2-1}{2}-m_3$. And pick $w_1,w_2,\ldots,w_l$ as the $l$ vertices in $T[F]$ with maximum out-degree. As $T[F]$ is a tournament, each vertex $w_i$ satisfies 
   \[
   \begin{aligned}
       d^+_{T[F]}(w_i)&\geq \frac{f(k)-m_2-|S|-m_3-l-1}{2}\\
       &= \frac{f(k)-m_2-|S|-m_3-(k-\frac{m_2-1}{2}-m_3)-1}{2}\\
       &\geq \frac{f(k)-\frac{m_2}{2}-k+1-k-1-\frac{1}{2}}{2}\stackrel{(\ref{e3})}{\geq } k-\frac{1}{4}.
   \end{aligned}
   \] 
   
   This implies that for each $w_i$, $i\in [l]$, there are at least $k$ internally disjoint $(w_i, y)$-paths of length 2 in $T[W]$. By iteratively applying Lemma \ref{keylemma} and reversing the $l+1$ arcs $w_1y,w_2y,\dots,w_ly,yx$ in sequence, we obtain a \((k-1)\)-strong spanning oriented subdigraph $T_{\{\stackrel{\leftarrow}{w_1y}, \stackrel{\leftarrow}{w_2y},\dots,\stackrel{\leftarrow}{w_ly},\stackrel{\leftarrow}{yx}\}}$ of $T$. Combining this with the fact that \( D \) does not contain a \( k \)-strong spanning oriented subdigraph, it follows that every \((k-1)\)-separating set of $T_{\{\stackrel{\leftarrow}{w_1y}, \stackrel{\leftarrow}{w_2y},\dots,\stackrel{\leftarrow}{w_ly},\stackrel{\leftarrow}{yx}\}}$ is also a separating set of \( T \). Therefore, either the number of \((k-1)\)-separating sets in $T_{\{\stackrel{\leftarrow}{w_1y}, \stackrel{\leftarrow}{w_2y},\dots,\stackrel{\leftarrow}{w_ly},\stackrel{\leftarrow}{yx}\}}$ is less than that in \( T \), or $T_{\{\stackrel{\leftarrow}{w_1y}, \stackrel{\leftarrow}{w_2y},\dots,\stackrel{\leftarrow}{w_ly},\stackrel{\leftarrow}{yx}\}}$ has the same number of \((k-1)\)-separating sets as \( T \) but the number of strongly connected components of $T_{\{\stackrel{\leftarrow}{w_1y}, \stackrel{\leftarrow}{w_2y},\dots,\stackrel{\leftarrow}{w_ly},\stackrel{\leftarrow}{yx}\}}$ is less than that of \( T - S \), which contradicts the optimality of \( T \). {Note that this case is slightly more complicated as it requires reversing multiple arcs.} This completes the proof.  \hfill{}$\Box$

\bibliographystyle{plain} 
\bibliography{ref}
\end{document}